\newtheorem{theorem}{Theorem}[section]
\newtheorem{lemma}[theorem]{Lemma}
\newtheorem{corollary}[theorem]{Corollary}
\newtheorem{proposition}[theorem]{Proposition}
\newtheorem{algorithm}{Algorithm}
\newcommand{\inner}[2]{\langle #1,#2\rangle}
\newcommand{\norm}[1]{\|{#1}\|}
\newcommand{\R}{\mathbb{R}}
\newcommand{\tos}{\rightrightarrows}
\newcommand{\comment}[1]{}
\newcommand{\comenta}[1]{}
\newcommand{\HH}{\mathcal{H}}
\newcommand{\mgap}{\vspace{.1in}}
\begin{document}

\title{On inexact relative-error hybrid proximal extragradient, 
forward-backward and Tseng's modified forward-backward methods with inertial effects
}
\author{
    M. Marques Alves
\thanks{
Departamento de Matem\'atica,
Universidade Federal de Santa Catarina,
Florian\'opolis, Brazil, 88040-900 ({\tt maicon.alves@ufsc.br}).
The work of this author was partially supported by CNPq grants no.
405214/2016-2 and 304692/2017-4.}
\and
Raul T. Marcavillaca
\thanks{
Departamento de Matem\'atica,
Universidade Federal de Santa Catarina,
Florian\'opolis, Brazil, 88040-900 ({\tt rtm1111@hotmail.com}).
The work of this author was partially supported by CAPES.
}
}


\maketitle

\begin{abstract}
In this paper, we propose and study the asymptotic convergence and nonasymptotic global convergence rates (iteration-complexity) of 
an inertial under-relaxed version of the relative-error hybrid proximal extragradient (HPE) method
for solving monotone inclusion problems.
We analyze the proposed method under more flexible assumptions than existing ones on the extrapolation and relative-error parameters. As applications, we propose and/or study
inertial under-relaxed forward-backward and Tseng's modified forward-backward type methods for solving structured monotone inclusions.
\\
\\
  2000 Mathematics Subject Classification: 90C25, 90C30, 47H05.
 \\
 \\
  Key words: inertial, relaxed,  proximal point method, HPE method, pointwise, ergodic, iteration-complexity, forward-backward algorithm,   Tseng's modified forward-backward algorithm.
\end{abstract}

\pagestyle{plain}


\section*{Introduction}

Inertial proximal point-type algorithms for monotone inclusions gained a lot of attention in
research recently (see, e.g., \cite{att.cab-con.pre218, att.cab-con.pre18} and the references therein). 
The first method of this type -- the inertial proximal point (PP) method --
for solving generalized equations with monotone operators was proposed and studied by Alvarez and Attouch in
\cite{alv.att-iner.svva01}. The intense research activity in the subject in the last years is in part due to its connections
with fast first-order algorithms for convex programming
 (see, e.g., \cite{att.cab-con.pre218, att.cab-con.pre18,att.chb.pey-fas.mp18,att.pey-rat.sjo16,att.pey.red-fas.jde16,lor.poc-ine.jmiv15}).

Since the inertial PP method of Alvarez and Attouch has been used as the hidden engine for the design and analysis of various first-order proximal algorithms with inertial effects, including inertial versions of ADMM, forward-backward and 
Douglas-Rachford algorithms (see, e.g., \cite{att.cab-con.pre218, att.cab-con.pre18,bot.cse.hen-ine.amc15,che.cha.ma-ine.sjis15,che.ma.yan-gen.sijo15}), it is natural to attempt to design inexact versions of it.
In \cite{bot.cse-hyb.nfao15}, Bot and Csetnek proposed and studied the asymptotic convergence of an inertial version of the hybrid proximal extragradient (HPE) method of Solodov and Svaiter~\cite{mon.sva-hpe.siam10,sol.sva-hyb.svva99}. The HPE method
is an inexact PP algorithm
for which, at each iteration, the corresponding proximal subproblems are supposed to be (inexactly) solved within
a relative error criterion (this contrasts to the summable error criterion proposed by Rockafellar~\cite{roc-mon.sjco76}).

In this paper, we propose and study the asymptotic convergence and 
nonasymptotic global convergence rates (iteration-complexity) of an inertial under-relaxed HPE method
for solving monotone inclusions. The proposed method (Algorithm \ref{inertial.hpe}) differs from the existing inertial HPE-type method of Bot
and Csetnek in the sense it is based on a different mechanism of iteration. 
Moreover, we prove its convergence and iteration-complexity under more flexible assumptions than those proposed in 
\cite{bot.cse-hyb.nfao15} on the extrapolation and relative-error parameters. 
%
%
As applications, we study inertial (under-relaxed) versions of the
Tseng's modified forward-backward and forward-backward algorithms (see Algorithms \ref{inertial.tseng} and \ref{inertial.fb})  for solving structured monotone inclusions problems.

The main contributions of this paper will be further discussed in Section \ref{sec:pre}.

\mgap
\noindent
This paper is organized as follows. In Section \ref{sec:pre}, we present some preliminaries and basic results, review some existing algorithms and discuss in detail the main contributions of this paper. The inertial under-relaxed HPE
method (Algorithm \ref{inertial.hpe}) is presented in Section \ref{sec:alg}; the main results are Theorems \ref{th:wc} (asymptotic convergence), and \ref{th:pic} and \ref{th:erg} (iteration-complexity). Sections \ref{sec:tseng} is devoted to
present and study the inertial versions of the Tseng's modified forward-backward and forward-backward algorithms; the main results are Theorems
\ref{th:tseng.main} and \ref{th:fb.main}. We finish the paper in Section \ref{sec:cr} with some concluding remarks.

\comment{
The inertial proximal point (PP) method is a modification of the Rockafellar's PP method
for which, at each iteration, past information is used to extrapolate the current iterate by an extrapolation factor $\alpha\in [0,1[$. The method was proposed and studied by Alvarez and Attouch \cite{} and since then it The convergence of the inertial PP method
was proved in \cite {} under the assumption that $\alpha$ is within the range $0\leq \alpha<1/3$, which
has become a standard assumption in the analysis of different variants and special instances of Alvarez--Attouch's method.

The hybrid proximal extragradient (HPE) method of Solodov and Svaiter \cite{} is an inexact PP algorithm
for which, at each iteration, the proximal subproblems are supposed to be (inexactly) solved within a relative error criterion determined by a tolerance $\sigma\in [0,1[$. When $\sigma=0$ it reduces to the exact Rockafellar's PP method and, on the other hand, $\sigma=0.99$ has been successfully used in many applications. Recently, an inertial version of the HPE method was proposed and studied by Bot and Csetnek \cite{}. In this case, the extrapolation factor $\alpha$ depends on $\sigma$ and it is
close to zero for large values of $\sigma\in [0,1[$ .

In this paper,  we propose an inertial under-relaxed HPE method, which generalizes the under-relaxed HPE method
of Svaiter \cite{}. In contrast to the inertial HPE method of Bot and
Csetnek, we obtain convergence and iteration-complexity of the proposed algorithm for the extrapolation factor $\alpha$ within
the standard range $0\leq \alpha<1/3$ at the price of performing an under-relaxed step with factor with  $\tau\geq 0.5$ (uniformly on $\sigma\in [0,1[$). Simple numerical experiments indicate that this strategy is promising even in the exact case, i.e., when $\sigma=0$. Beyond to that, we explicitly compute the corresponding under-relaxation factor $\tau=\tau(\sigma,\beta)$ to implement the method with extrapolation factor within the range $0\leq \alpha<\beta<1$ ($\tau=1$ when $\beta=1/3$ and $\sigma=0$). As an application, we propose and study an (under-relaxed) inertial version of the forward-backward-forward method of Tseng. We show that our propsed version of the latter algorithm deals differently and better with parameters than existing ones.
}

\section{Preliminaries, basic results and general notation}
 \label{sec:pre}

\subsection{Problem statement}
Let $\HH$ be a real Hilbert space and consider the  general
monotone inclusion problem (MIP) of finding $z\in \HH$ such that
\begin{align}
 \label{eq:mip}
 0\in T(z)
\end{align}
as well as the \emph{structured} MIP
\begin{align}
 \label{eq:mips}
 0\in F(z)+B(z)
\end{align}
where $T$ and $B$ are (set-valued) maximal monotone operators on $\HH$ and $F: D(F)\subset \HH \to \HH$
is a (point-to-point) monotone operator which is either \emph{Lipschitz continuous} or
\emph{cocoercive} (see Subsections \ref{subsec:tsg} and \ref{subsec:fb} for the precise statement).
Problems \eqref{eq:mip} and \eqref{eq:mips} appear in different fields of applied mathematics and optimization including convex optimization, signal
processing, PDEs, inverse problems, among others (see, e.g.,\cite{bau.com-book,glo.osh.yin-spl.spi16}).
We mention that under mild conditions on the operators $F$ and $B$, problem \eqref{eq:mips} becomes a special instance of \eqref{eq:mip} with $T:=F+B$. 

In this paper, we propose and study the asymptotic convergence and the iteration-complexity of
inertial under-relaxed versions of the \emph{hybrid proximal extragradient} (HPE) method (Algorithm \ref{inertial.hpe}),
and \emph{Tseng's modified forward-backward} (Algorithm \ref{inertial.tseng}) and \emph{forward-backward} (Algorithm \ref{inertial.fb}) methods for solving \eqref{eq:mip}, and \eqref{eq:mips}, respectively.

\emph{The main contributions of (as well as the most related works with) this paper will be discussed along the next subsections, the main contributions being further summarized in Subsection \ref{subsec:main.cont}.}

\subsection{The Alvarez--Attouch's inertial proximal point method}
  \label{subsec:iner.intr}

The \emph{proximal point}  (PP) \emph{method} is an iterative scheme for seeking
approximate solutions of \eqref{eq:mip}. It was first proposed by Martinet~\cite{MR0298899} for solving monotone variational inequalities (with point-to-point operators) and further
studied and developed by Rockafellar in his pioneering work~\cite{roc-mon.sjco76}.
In its exact formulation, an iteration of the PP method can
be described by
\begin{align}
 \label{eq:exact.prox}
 z_k:= (\lambda_k T+I)^{-1}z_{k-1}\qquad \forall k\geq 1,
\end{align}
where $\lambda_k>0$ is a stepsize parameter and
$z_{k-1}$ is the current iterate.

The \emph{inertial}  PP \emph{method} is a modification of \eqref{eq:exact.prox} proposed and studied by
Alvarez and Attouch in \cite{alv.att-iner.svva01} as follows: for all $k\geq 1$,
\begin{align}
\left\{
       \begin{array}{ll}
			 \label{eq:iner.intr}
				 w_{k-1}:=z_{k-1}+\alpha_{k-1}(z_{k-1}-z_{k-2}),\\[4mm]
                 z_k:= (\lambda_k T+I)^{-1}w_{k-1},
        \end{array}
        \right.
\end{align}
%
%
where $\{\alpha_{k}\}$ is a sequence of extrapolation parameters; note that if $\alpha_k\equiv 0$, then it follows that
\eqref{eq:iner.intr} reduces to the Rockafellar's PP method \eqref{eq:exact.prox}. Inertial PP-type
methods deserve a lot of attention in nowadays research due the possibility of extending this methodology
to different practical algorithms and, in part, as we mentioned earlier,  due to its connections with fast first-order methods in convex programming.
Asymptotic (weak) convergence of $\{z_k\}$ generated in \eqref{eq:iner.intr} to a solution of \eqref{eq:mip} was first obtained in
\cite{alv.att-iner.svva01} under the assumptions that $\lambda_k\geq \underline{\lambda}>0$ and 
\begin{align}
  \label{eq:alpha.ser}
 0\leq \alpha_{k-1}\leq \alpha_k \leq \alpha<1/3\qquad \forall k\geq 1.
\end{align}
The above upper bound $1/3$ on $\{\alpha_k\}$ has become standard in the analysis of
inertial-like proximal algorithms (see, e.g., \cite{che.cha.ma-ine.sjis15,che.ma.yan-gen.sijo15,lor.poc-ine.jmiv15, mou.oli-con.jcam03}).
It seems that \eqref{eq:alpha.ser} was first improved by Alvarez in \cite[Proposition 2.5]{alv-wea.siam03}
in the setting of projective-proximal point-type methods and, more recently, by Attouch and Cabot in \cite{att.cab-con.pre18} with
relaxation playing a central
role.
One of the main goals of this contribution is the analysis of an inertial under-relaxed HPE-type 
method under the assumption (actually more general than)
\eqref{eq:alpha.ser} on $\{\alpha_k\}$; see Assumption ${\bf (A)}$.

\subsection{The hybrid proximal extragradient method of Solodov and Svaiter}
  \label{subsec:hpe.intr}

It is of course important to design and study inexact versions of
known (exact) numerical algorithms, and this also applies to \eqref{eq:exact.prox}.
In~\cite{roc-mon.sjco76}, Rockafellar proved that
if, at each iteration $k\geq 1$, $z_k$ is computed satisfying
\begin{align}
 \label{eq:inexact.prox}
 \norm{z_k-(\lambda_k T+I)^{-1}z_{k-1}}\leq e_k,\quad \sum_{k=1}^\infty\,e_k<\infty,
\end{align}
and $\{\lambda_k\}$ is bounded away from zero, then $\{z_k\}$ converges (weakly) to a solution
of \eqref{eq:mip}.
Many modern inexact versions of the PP method \eqref{eq:exact.prox}, as opposed to the summable error criterion \eqref{eq:inexact.prox}, use \emph{relative error tolerances} for solving
the associated subproblems.
The first methods of this type were proposed by Solodov and Svaiter in \cite{sol.sva-hyb.svva99,sol.sva-hyb.jca99} and subsequently
studied in \cite{MonSva10-1,mon.sva-hpe.siam10,MonSva10-2,sol.sva-ine.mor00,Sol-Sv:hy.unif}.
%
%
%
%
The key idea consists of observing that \eqref{eq:exact.prox} can be decoupled as
\begin{align}
  \label{eq:dec.prox}
  v_k \in T(z_k),\quad  \lambda_k v_k+z_{k}-z_{k-1}=0,
\end{align}
and then relaxing \eqref{eq:dec.prox}
within relative error tolerance criteria.
Among these new methods, the HPE
method \cite{sol.sva-hyb.svva99} has been shown to be
very effective as a framework for the design and analysis
of many concrete algorithms
(see, e.g.,~\cite{bot.cse-hyb.nfao15,cen.mor.yao-hyb.jota10,eck.sil-pra.mp13,he.mon-acc.siam16,ius.sos-pro.opt10,lol.par.sol-cla.jca09,mon.ort.sva-fir.mpc14,mon.ort.sva-imp.coap14,mon.ort.sva-ada.coap16,MonSva10-2,sol.sva-hyb.svva99,sol.sva-ine.mor00,Sol-Sv:hy.unif}). It can be described as follows: for all $k\geq 1$,

\begin{align}
\left\{
       \begin{array}{ll}
			 \label{eq:v.hpe}
				 v_k\in T^{\varepsilon_k}(\tilde z_k),\quad  \norm{\lambda_k v_k+\tilde z_k-z_{k-1}}^2+
 2\lambda_k\varepsilon_k \leq \sigma^2 \norm{\tilde z_k-z_{k-1}}^2,\\[3mm]
z_k:=z_{k-1}-\lambda_k v_k,\\
        \end{array}
        \right.
\end{align}
where $\sigma \in [0,1[$. (see Subsection \ref{subsec:moen} for the general notation on $\varepsilon$-enlargements $T^\varepsilon(\cdot)$.)
Note that if $\sigma=0$, then it follows that \eqref{eq:v.hpe} reduces to the exact PP method
\eqref{eq:exact.prox}.
As we mentioned before, recently Bot and Csetnek \cite{bot.cse-hyb.nfao15} proposed and studied
an inertial proximal-like algorithm which combines ideas from \eqref{eq:iner.intr} and
\eqref{eq:v.hpe}. They have proved asymptotic convergence of their method
under the assumption $\alpha(5+4\sigma^2)+\sigma^2<1$ on $\alpha$ and $\sigma$,
where $\sigma\in [0,1[$ is as in \eqref{eq:v.hpe} and $0\leq \alpha_{k-1}\leq \alpha_k\leq \alpha<1$ for all $k\geq 1$ (cf. \eqref{eq:alpha.ser}). This condition enforces $\alpha\approx 0$ whenever $\sigma\approx 1$. This would, in particular, degenerate
the desired inertial effect in many important applications of HPE-type methods for which $\sigma=0.99$ is known (experimentally) to be
the best choice among all possible $\sigma\in [0,1[$ (see, e.g., \cite{eck.sil-pra.mp13,eck.yao-rel.mp17,mon.ort.sva-fir.mpc14,mon.ort.sva-imp.coap14}).

In this paper, we propose an inertial under-relaxed HPE-type method (Algorithm \ref{inertial.hpe}) with guarantee of asymptotic
convergence and iteration-complexity (both pointwise and ergodic) under the assumption (actually more general than)
\eqref{eq:alpha.ser} on $\{\alpha_k\}$; see Assumption ${\bf (A)}$. The price to pay is to perform, in addition to
inertial, under-relaxed
steps.
On the other hand, the under-relaxed parameter $\tau\in ]0,1]$ is explicitly computed
and,  in the case of \eqref{eq:alpha.ser},  $\tau\geq 0.5$, the latter lower bound being uniform on $\sigma\in [0,1[$
(see the third remark following Assumption ${\bf (A)}$).
We also emphasize that our algorithm is different of the corresponding one in \cite{bot.cse-hyb.nfao15}, in the sense it is based on a different
mechanism of iteration.

The main convergence results on Algorithm \ref{inertial.hpe} are Theorems \ref{th:wc}, \ref{th:pic} and
\ref{th:erg}. It seems it is the first time in the literature that global (ergodic) $\mathcal{O}(1/k)$ convergence rates
 are obtained 
for inertial-like proximal algorithms (see Theorem \ref{th:erg}).

\subsection{Forward-backward and Tseng's modified forward-backward methods}

With its roots in the projected gradient algorithm for convex optimization, the \emph{forward-backward method} 
(see, e.g., \cite{lio.mer-spl.sjna79,pas-erg.jmaa79}) is one of the most popular numerical algorithms for solving the structured monotone inclusion problem \eqref{eq:mips}, having numerous applications in modern applied 
mathematics (see, e.g., \cite{bau.com-book}). It can be described as follows: for all $k\geq 1$,
 \begin{align}
    \label{eq:fb.intr}
   z_k:=(\lambda_k B+I)^{-1}(z_{k-1}-\lambda_k F(z_{k-1})),
 \end{align}
where $\lambda_k>0$ is a stepsize parameter and  $z_{k-1}$ is the current iterate.
Under the assumption that $F:\HH\to \HH$ is \emph{cocoercive} and $\{\lambda_k\}$ is within a certain range, it follows that the sequence $\{z_k\}$ generated in \eqref{eq:fb.intr} is weakly convergent
to a solution of \eqref{eq:mips} (see, e.g., \cite{bau.com-book}). 
In the seminal paper \cite{tse-mod.sjco00}, Tseng proposed and studied the following modification of
\eqref{eq:fb.intr} -- known as the \emph{Tseng's modified forward-backward method}: for all $k\geq 1$,
\begin{align}
\left\{
       \begin{array}{ll}
			 \label{eq:tseng.intr}
				 \tilde z_k:=(\lambda_k B+I)^{-1}(z_{k-1}-\lambda_k F(z_{k-1})),\\[3mm]
z_k:=\tilde z_k-\lambda_k(F(\tilde z_k)-F(z_{k-1})).\\
        \end{array}
        \right.
\end{align}
We clearly see that in contrast to \eqref{eq:fb.intr}, \eqref{eq:tseng.intr} performs an additional forward step
to define the next iterate $z_k$.
This is crucial to obtain convergence under the (weaker than cocoercivity)
assumption of \emph{Lipschitz continuity} on $F$ (see, e.g., \cite{bau.com-book,tse-mod.sjco00}).
Since both forward-backward and
Tseng's modified forward-backward methods are known to be special instances of the HPE method
\eqref{eq:v.hpe} for solving \eqref{eq:mip} with $T:=F+B$ 
(see, e.g., \cite{mon.sva-hpe.siam10,sol.sva-hyb.svva99,sva-cla.jota14}), 
we have managed to propose and/or study 
inertial under-relaxed versions of \eqref{eq:fb.intr}  and \eqref{eq:tseng.intr} -- namely, Algorithms
\ref{inertial.tseng} and \ref{inertial.fb}, respectively -- as special instances of the proposed inertial under-relaxed 
HPE method (Algorithm \ref{inertial.hpe}). As a by-product of the results obtained for 
Algorithm \ref{inertial.hpe}, we prove their asymptotic convergence 
as well as their global $\mathcal{O}(1/\sqrt{k})$ pointwise and $\mathcal{O}(1/k)$ ergodic convergence
rates/iteration-complexity (see Theorems \ref{th:tseng.main} and
\ref{th:fb.main}).
We discuss some existing inertial/relaxed variants of \eqref{eq:fb.intr} and \eqref{eq:tseng.intr} as well as how they are related
to Algorithms \ref{inertial.tseng} and \ref{inertial.fb} in the remarks following them.
We also emphasize that, since Algorithms
\ref{inertial.tseng} and \ref{inertial.fb} will be analyzed within the framework of Algorithm \ref{inertial.hpe}, they will automatically inherit all the possible benefits from
the proposed policy of choosing the upper bound on the sequence of inertial parameters and the relaxation parameter 
(see Assumption {\bf (A)}, the remarks following it, and the remarks following Algorithms \ref{inertial.tseng} and
\ref{inertial.fb}).


\subsection{The main contributions of this work}
  \label{subsec:main.cont}

We summarize the main contributions of this work are as follows:
\begin{itemize}
 \item[(i)] Asymptotic convergence and nonasymptotic global $\mathcal{O}(1/\sqrt{k})$ \emph{pointwise} and $\mathcal{O}(1/k)$ 
\emph{ergodic} convergence rates (iteration-complexity) of an inertial under-relaxed HPE method (Algorithm \ref{inertial.hpe}) for solving \eqref{eq:mip} under more flexible than existing assumptions on the
choice of inertial $\{\alpha_k\}$ and relative-error $\sigma\in [0,1[$ parameters (see Assumption {\bf (A)} and the remarks 
following it). 
We show, in particular, that it is possible to assume the upper bound
$1/3$ on the sequence of inertial
parameters $\{\alpha_k\}$, which became standard in the analysis of inertial-type proximal algorithms,  at the price of performing under-relaxed iterations with explicitly computed parameter $\tau\geq 0.5$, where the latter lower bound is uniform on the
relative-error parameter $\sigma\in [0,1[$. We also emphasize that, up to the authors knowledge, it is the first time in the literature
that an iteration-complexity analysis is performed for
inertial HPE-type methods (see Theorems \ref{th:pic} and \ref{th:erg}) and it seems it is also the first time
that \emph{ergodic} iteration-complexity results are established for inertial proximal-type algorithms.
\item[(ii)] Asymptotic convergence and pointwise and ergodic iteration-complexity of inertial under-relaxed versions of the
Tseng's modified forward-backward method (Algorithm \ref{inertial.tseng}) and forward-backward method (Algorithm \ref{inertial.fb})
for solving \eqref{eq:mips} under the assumption that $F$ is monotone and either Lipschitz continuous or cocoercive. 
Analogously to (i), in this case, the proposed methods also benefit from the more flexible than standard assumptions on the choice of
inertial parameters (see Subsections \ref{subsec:tsg} and \ref{subsec:fb} for a discussion).

 \end{itemize}

\subsection{General notation and basics on monotone operators and $\varepsilon$--enlargements}
\label{subsec:moen}

Let $\HH$ be
a real Hilbert space with
inner product $\inner{\cdot}{\cdot}$ and induced norm $\|\cdot\|=\sqrt{\inner{\cdot}{\cdot}\textbf{}}$.
The weak limit of a sequence $\{z_k\}$ in $\HH$ (whenever it exists) will be denoted by $w-\lim_{k\to \infty}\,z_k$.
A set-valued map $T:\HH\tos \HH$ is said to be a \emph{monotone operator} if
$\inner{z-z'}{v-v'}\geq 0$ for all $v\in T(z)$ and $v'\in T(z')$. On the other hand, $T:\HH\tos \HH$ is
\emph{maximal monotone} if $T$ is monotone and its graph
$G(T):=\{(z,v)\in \HH\times \HH\,|\,v\in T(z)\}$ is not properly contained in the graph of any other
monotone operator on $\HH$. The inverse of $T:\HH\tos \HH$ is $T^{-1}:\HH\tos \HH$, defined at any
$z\in \HH$ by $v\in T^{-1}(z)$ if and only if $z\in T(v)$. The resolvent of a maximal monotone operator
$T:\HH\tos \HH$ is $(T+I)^{-1}$ and $z=(T+I)^{-1}x$ if and only if $x-z\in T(z)$. The operator
$\gamma T:\HH\tos \HH$, where $\gamma>0$, is defined by $(\gamma T)z:=\gamma T(z):=\{\gamma v\,|\,v\in T(z)\}$.
%

For $T:\HH\tos\HH$ maximal monotone and $\varepsilon\geq 0$, the $\varepsilon$-enlargement~\cite{bur.ius.sva-enl.svva97} of 
$T$
is the operator $T^{\varepsilon}:\HH\tos\HH$ defined by
\begin{align}
 \label{eq:def.teps}
 T^{\varepsilon}(z):=\{v\in \HH\;|\;\inner{z-z'}{v-v'}\geq -\varepsilon\;\;\forall (z',v')\in G(T)\}\quad \forall z\in \HH.
\end{align}
Note that $T(z)\subset T^{\varepsilon}(z)$ for all $z\in \HH$.


The following summarizes some useful properties of $T^{\varepsilon}$ (see, e.g., \cite[Lemma 3.1 and Proposition 3.4(b)]{bur.sva-enl.svaa99}).

\begin{proposition}
\label{pr:teps}
Let $T, S:\HH\tos \HH$ be set-valued maps.  Then,
\begin{itemize}
\item[\emph{(a)}] If $\varepsilon \leq \varepsilon'$, then
$T^{\varepsilon}(z)\subseteq T^{\varepsilon'}(z)$ for every $z \in \HH$.
\item[\emph{(b)}] $T^{\varepsilon}(z)+S^{\,\varepsilon'}(z) \subseteq
(T+S)^{\varepsilon+\varepsilon'}(z)$ for every $z \in \HH$ and
$\varepsilon, \varepsilon'\geq 0$.
\item[\emph{(c)}] $T$ is monotone, if and only if $T  \subseteq T^{0}$.
\item[\emph{(d)}] $T$ is maximal monotone, if and only if $T = T^{0}$.
\item [\emph{(e)}] If $T$ is maximal monotone, $\{(\tilde z_k,v_k,\varepsilon_k)\}$ is
such that $v_k\in T^{\varepsilon_k}(\tilde z_k)$, for all $k\geq 1$, $w-\lim_{k\to \infty}\,\tilde z_k=z$,
$\lim_{k\to \infty}\,v_k=v$ and $\lim_{k\to \infty}\,\varepsilon_k=\varepsilon$, then
$v\in T^{\varepsilon}(z)$.
\end{itemize}
\end{proposition}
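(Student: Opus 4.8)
The plan is to unwind the defining inequality
\[
\inner{z-z'}{v-v'} \geq -\varepsilon \qquad \forall\, (z',v') \in G(T)
\]
of $T^{\varepsilon}(z)$ in each item. Parts (a)--(c) then follow almost immediately. For (a), if $v \in T^{\varepsilon}(z)$ and $\varepsilon \leq \varepsilon'$, I simply read off $\inner{z-z'}{v-v'} \geq -\varepsilon \geq -\varepsilon'$, so $v \in T^{\varepsilon'}(z)$. For (b), given $v \in T^{\varepsilon}(z)$ and $w \in S^{\varepsilon'}(z)$, I would take an arbitrary $(z',u') \in G(T+S)$, decompose $u' = u_T + u_S$ with $u_T \in T(z')$ and $u_S \in S(z')$, and add the two defining inequalities $\inner{z-z'}{v-u_T} \geq -\varepsilon$ and $\inner{z-z'}{w-u_S} \geq -\varepsilon'$ to get $\inner{z-z'}{(v+w)-u'} \geq -(\varepsilon+\varepsilon')$, i.e.\ $v+w \in (T+S)^{\varepsilon+\varepsilon'}(z)$. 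For (c), the inclusion $T \subseteq T^{0}$ says exactly that every $(z,v) \in G(T)$ satisfies $\inner{z-z'}{v-v'} \geq 0$ for all $(z',v') \in G(T)$, which is verbatim the monotonicity of $T$.

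For (d), recall from (c) that monotonicity of $T$ is equivalent to $T \subseteq T^{0}$, so the content is the reverse inclusion. If $T$ is maximal monotone, I would establish $T^{0} \subseteq T$ by an extension argument: given $v \in T^{0}(z)$, the pair $(z,v)$ satisfies $\inner{z-z'}{v-v'} \geq 0$ against every $(z',v') \in G(T)$, so adjoining $(z,v)$ to $G(T)$ yields a monotone operator containing $T$, and maximality forces $(z,v) \in G(T)$. Conversely, if $T = T^{0}$ then $T$ is monotone by (c), and any monotone $\tilde T \supseteq T$ has every graph point $(z,v)$ satisfying $v \in T^{0}(z) = T(z)$; hence $\tilde T = T$ and $T$ is maximal.

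The real work is item (e), where I must pass to the limit in
\[
\inner{\tilde z_k - z'}{v_k - v'} \geq -\varepsilon_k, \qquad (z',v') \in G(T)\ \text{fixed},
\]
under only \emph{weak} convergence of $\{\tilde z_k\}$. The obstacle is that the pairing of two merely weakly convergent factors need not converge, so I cannot take the limit termwise. The plan is to split
\[
\inner{\tilde z_k - z'}{v_k - v'} = \inner{\tilde z_k - z'}{v_k - v} + \inner{\tilde z_k - z'}{v - v'}.
\]
In the second term the vector $v - v'$ is fixed, so $\tilde z_k \rightharpoonup z$ gives convergence to $\inner{z - z'}{v - v'}$. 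For the first term I would use that a weakly convergent sequence is bounded, so $\norm{\tilde z_k - z'} \leq M$ for some $M$, and then Cauchy--Schwarz together with the \emph{strong} convergence $v_k \to v$ yields $\abs{\inner{\tilde z_k - z'}{v_k - v}} \leq M\,\norm{v_k - v} \to 0$. Passing to the limit and using $\varepsilon_k \to \varepsilon$ then gives $\inner{z-z'}{v-v'} \geq -\varepsilon$; since $(z',v')$ was arbitrary, $v \in T^{\varepsilon}(z)$. The crux is precisely this strong-times-weak estimate that annihilates the first term.
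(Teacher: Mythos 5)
Your proof is correct in all five items, but there is nothing in the paper to compare it against: the paper does not prove this proposition, it simply quotes it from the literature (see the attribution to \cite[Lemma 3.1 and Proposition 3.4(b)]{bur.sva-enl.svaa99}). Your arguments are the standard ones and make the statement self-contained: (a)--(c) are direct unwindings of the definition \eqref{eq:def.teps}, (d) is the usual extension-plus-maximality argument in both directions, and (e) is the weak--strong splitting $\inner{\tilde z_k-z'}{v_k-v'}=\inner{\tilde z_k-z'}{v_k-v}+\inner{\tilde z_k-z'}{v-v'}$, where boundedness of the weakly convergent sequence kills the first term via Cauchy--Schwarz and weak convergence handles the second. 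Two minor observations. First, in (e) your argument never actually invokes maximal monotonicity of $T$: passing to the limit in the defining inequalities of $T^{\varepsilon_k}$ works for an arbitrary set-valued map, so you have proved a slightly more general statement than the one asserted (the hypothesis is inherited from the standard formulation in the literature, where $T^{\varepsilon}$ is only considered for maximal monotone $T$). Second, in (b) it is worth saying explicitly that the decomposition $u'=u_T+u_S$ with $u_T\in T(z')$, $u_S\in S(z')$ exists by the very definition of the graph of $T+S$ and need not be unique; your argument is indifferent to which decomposition is chosen, so this is a presentational point only.
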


\comment{
\begin{proposition}\emph{(see, e.g., \cite[Lemma 3.1 and Proposition 3.4(b)]{bur.sva-enl.svaa99})}
 \label{pr:ase}
 Assume $\{(\tilde z_k,v_k,\varepsilon_k)\}$ is a sequence in  $\HH\times \HH\times \R_{+}$ such that
$v_k\in T^{\varepsilon_k}(\tilde z_k)$ for all $k\geq 1$. If, $w-\lim_{k\to \infty}\,\tilde z_k=z$,
$\lim_{k\to \infty}\,v_k=v$ and $\lim_{k\to \infty}\,\varepsilon_k=\varepsilon$, then
$v\in T^{\varepsilon}(z)$.
\end{proposition}
}

Next we present the transportation formula
for $\varepsilon$-enlargements.

\begin{theorem}\emph{(see, e.g., \cite[Theorem 2.3]{bur.sag.sva-enl.col99})}
 \label{th:tf}
  Suppose $T:\HH\tos \HH$ is maximal monotone and
	let $\tilde z_\ell, v_\ell\in \HH$, $\varepsilon_\ell, \alpha_\ell\in \R_+$,
	for $\ell=1,\dots, k$, be such that
	 \[
	 v_\ell\in T^{\varepsilon_\ell}(\tilde z_\ell),\quad \ell=1,\dots, k,\quad  \sum_{\ell=1}^k\,\alpha_\ell=1,
	\]
	and define
	\[
	 \tilde {z}_k^a:=\sum_{\ell=1}^k\,\alpha_\ell\, \tilde z_\ell\,,\quad 
	  {v}_k^a:=\sum_{\ell=1}^k\,\alpha_\ell\; v_\ell\,,\quad
	  \varepsilon_k^a:=\sum_{\ell=1}^k\,\alpha_\ell \left(\varepsilon_\ell+\inner{\tilde z_\ell-\tilde z_k^a}
	 {v_\ell- v_k^a}\right).
	\]
        Then, $\varepsilon_k^a\geq 0$ and 
        \[
          v_k^a \in T^{\varepsilon_k^a}(\tilde z_k^a).
        \]
	%
\end{theorem}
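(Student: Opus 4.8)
The plan is to verify directly the defining inequality \eqref{eq:def.teps} of the $\varepsilon$-enlargement at the averaged pair $(\tilde z_k^a, v_k^a)$ with tolerance $\varepsilon_k^a$, and then to deduce $\varepsilon_k^a\geq 0$ as a separate consequence of maximal monotonicity. Concretely, I would fix an arbitrary $(z',v')\in G(T)$ and aim to establish the single ``master inequality''
\[
\inner{\tilde z_k^a - z'}{v_k^a - v'}\;\geq\; -\varepsilon_k^a ,
\]
since, by \eqref{eq:def.teps}, proving this for every $(z',v')\in G(T)$ is exactly the assertion $v_k^a\in T^{\varepsilon_k^a}(\tilde z_k^a)$ (once $\varepsilon_k^a\geq 0$ is known).

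First I would record the purely algebraic identity obtained by expanding the bilinear form and using $\sum_{\ell=1}^k\alpha_\ell=1$, namely
\[
\inner{\tilde z_k^a - z'}{v_k^a - v'}\;=\;\sum_{\ell=1}^k\alpha_\ell\inner{\tilde z_\ell - z'}{v_\ell - v'}\;-\;\sum_{\ell=1}^k\alpha_\ell\inner{\tilde z_\ell - \tilde z_k^a}{v_\ell - v_k^a}.
\]
The key observation making this work is that the discrepancy $\sum_\ell\alpha_\ell\inner{\tilde z_\ell - z'}{v_\ell - v'}-\inner{\tilde z_k^a - z'}{v_k^a - v'}$ is \emph{independent} of $(z',v')$ and equals the centered sum $\sum_\ell\alpha_\ell\inner{\tilde z_\ell - \tilde z_k^a}{v_\ell - v_k^a}$; this is a routine expansion carried out with bilinearity and the defining relations $\tilde z_k^a=\sum_\ell\alpha_\ell \tilde z_\ell$ and $v_k^a=\sum_\ell\alpha_\ell v_\ell$. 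Then, invoking the hypotheses $v_\ell\in T^{\varepsilon_\ell}(\tilde z_\ell)$ through \eqref{eq:def.teps}, i.e. $\inner{\tilde z_\ell - z'}{v_\ell - v'}\geq -\varepsilon_\ell$, multiplying by $\alpha_\ell\geq 0$ and summing, the first sum on the right is bounded below by $-\sum_\ell\alpha_\ell\varepsilon_\ell$; substituting into the identity and recognizing $\sum_\ell\alpha_\ell\varepsilon_\ell+\sum_\ell\alpha_\ell\inner{\tilde z_\ell - \tilde z_k^a}{v_\ell - v_k^a}$ as precisely $\varepsilon_k^a$ yields the master inequality.

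For the nonnegativity $\varepsilon_k^a\geq 0$, I would stress that no purely algebraic argument can succeed: the centered sum $\sum_\ell\alpha_\ell\inner{\tilde z_\ell - \tilde z_k^a}{v_\ell - v_k^a}$ may be strictly negative and of larger magnitude than $\sum_\ell\alpha_\ell\varepsilon_\ell$ when the $v_\ell$ range over enlargements, so maximal monotonicity must enter. The clean way to inject it is via surjectivity of the resolvent (Minty): since $T$ is maximal monotone, $I+T$ is onto, so I may choose $z':=(I+T)^{-1}(\tilde z_k^a+v_k^a)$ and $v':=\tilde z_k^a+v_k^a-z'\in T(z')$, which forces $\tilde z_k^a-z'=v'-v_k^a$. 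Plugging this particular graph pair into the already-proven master inequality gives $-\Norm{v_k^a-v'}^2=\inner{\tilde z_k^a - z'}{v_k^a - v'}\geq -\varepsilon_k^a$, whence $\varepsilon_k^a\geq \Norm{v_k^a-v'}^2\geq 0$.

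I expect the main obstacle to be the nonnegativity claim rather than the membership: the membership is a short deterministic computation once the centering identity is in place, but the tempting pairwise route---proving $\inner{\tilde z_\ell - \tilde z_m}{v_\ell - v_m}\geq -(\varepsilon_\ell+\varepsilon_m)$ and symmetrizing---fails, since this pairwise transportation bound is \emph{false} for genuine $\varepsilon$-enlargements (it already breaks for $T=I$, where $T^{\varepsilon}(z)$ is a ball of radius $2\sqrt{\varepsilon}$ around $z$). Thus the real content is recognizing that the collective object $(\tilde z_k^a,v_k^a)$ must be tested against a single cleverly chosen graph pair supplied by surjectivity of $I+T$, which is exactly where maximal monotonicity, as opposed to mere monotonicity, is essential.
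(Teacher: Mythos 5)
Your proposal is correct, but note that the paper itself contains no proof of Theorem \ref{th:tf}: the result is quoted directly from \cite[Theorem 2.3]{bur.sag.sva-enl.col99}, so there is no in-paper argument to compare against. Your proof is, in substance, the standard one for the cited transportation formula: the centering identity
\[
\sum_{\ell=1}^k \alpha_\ell \inner{\tilde z_\ell - z'}{v_\ell - v'}
= \inner{\tilde z_k^a - z'}{v_k^a - v'}
+ \sum_{\ell=1}^k \alpha_\ell \inner{\tilde z_\ell - \tilde z_k^a}{v_\ell - v_k^a},
\]
valid because $\sum_{\ell}\alpha_\ell(\tilde z_\ell - \tilde z_k^a)=0$ and $\sum_{\ell}\alpha_\ell(v_\ell - v_k^a)=0$, yields your master inequality from the $k$ enlargement inequalities alone (with no sign assumption on $\varepsilon_k^a$, so there is no circularity), and testing it against the single graph pair $z'=(I+T)^{-1}(\tilde z_k^a + v_k^a)$, $v'=\tilde z_k^a+v_k^a-z'$ supplied by Minty's theorem gives $\varepsilon_k^a \geq \norm{v_k^a - v'}^2 \geq 0$; this is equivalent to the classical observation that for maximal monotone $T$ the set $T^{\varepsilon}(z)$ would be empty for $\varepsilon<0$. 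Your cautionary remarks are also accurate: the pairwise bound $\inner{\tilde z_\ell - \tilde z_m}{v_\ell - v_m}\geq -(\varepsilon_\ell+\varepsilon_m)$ indeed fails (for $T=I$ the sharp constant is $-(\sqrt{\varepsilon_\ell}+\sqrt{\varepsilon_m})^2$), and maximality is genuinely needed for nonnegativity, as a non-maximal monotone operator with graph $\{(0,0)\}$ in $\R$ already admits $\varepsilon_k^a<0$.
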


\mgap

The following well-known property will be also useful in this paper.
\mgap
For any $w,z\in \HH$ and $p\in \R$, we have
\begin{align}
 \label{eq:str2}
\norm{pw+(1-p)z}^2=p\norm{w}^2+(1-p)\norm{z}^2-p(1-p)\norm{w-z}^2.
\end{align}

\newpage
\section{An inertial under-relaxed hybrid proximal extragradient method}
 \label{sec:alg}

Consider the monotone inclusion problem \eqref{eq:mip}, i.e., the problem
of finding $z\in \HH$ such that
\begin{align}
 \label{eq:mip2}
 0\in T(z)
\end{align}
where $T$ is a maximal monotone operator on $\HH$ for which $T^{-1}(0)\neq \emptyset$.

In this section, we propose and study the asymptotic convergence and nonasymptotic global convergence
rates (iteration-complexity) of an inertial under-relaxed hybrid proximal extragradient  (HPE) method (Algorithm \ref{inertial.hpe})
for solving \eqref{eq:mip2}.
Regarding the iteration-complexity analysis, we consider the following
notion of approximate solution for \eqref{eq:mip2}: given tolerances $\rho,\epsilon>0$, find
$z,v\in \HH$ and  $\varepsilon\geq 0$ such that
\begin{align}
  \label{eq:appsol}
 v\in T^{\varepsilon}(z),\quad \norm{v}\leq \rho,\quad \varepsilon\leq \epsilon.
\end{align}
Note that $\rho=\epsilon=0$ in \eqref{eq:appsol} gives $0\in T(z)$, i.e., in this case $z\in \HH$ is a solution 
of \eqref{eq:mip2} (for a more detailed discussion on \eqref{eq:appsol}, see, e.g., \cite{mon.sva-hpe.siam10}). 

The main results in this section are Theorems \ref{th:wc}, \ref{th:pic} and \ref{th:erg}. We refer the reader to the remarks and comments
following each of the above mentioned theorems for a discussion regarding the contribution of each of them in the light of related results
available  in the current literature.

\mgap
\mgap

\noindent
\fbox{
\addtolength{\linewidth}{-2\fboxsep}%
\addtolength{\linewidth}{-2\fboxrule}%
\begin{minipage}{\linewidth}
\begin{algorithm}
\label{inertial.hpe}
{\bf An inertial under-relaxed HPE method for solving \bf{(\ref{eq:mip2})}}
\end{algorithm}
\begin{itemize}
\item[] {\bf Input:}  $z_0=z_{-1}\in \HH$ and $0\leq \alpha, \sigma<1$ and $0<\tau\leq 1$.
\item [{\bf 1:}] {\bf for} $k=1,2,\dots$,  {\bf do}
\item [{\bf 2:}] Choose $\alpha_{k-1}\in [0,\alpha]$ and define
  \begin{align}
      \label{eq:ext.hpe}
     w_{k-1}:=z_{k-1}+\alpha_{k-1}(z_{k-1}-z_{k-2}).
 \end{align}
\item [{\bf 3:}] Find $(\tilde z_k,v_k,\varepsilon_k)\in \HH\times \HH\times \R_+$ and $\lambda_k>0$ such that
\begin{align}
\label{eq:err.hpe}
 v_k\in T^{\varepsilon_k}(\tilde z_k),\quad \norm{\lambda_k v_k+\tilde z_k-w_{k-1}}^2+2\lambda_k\varepsilon_k\leq
\sigma^2\norm{\tilde z_k-w_{k-1}}^2.
\end{align}
\item[{\bf 4:}] Define
 \begin{align}
  \label{eq:err.hpe2}
   z_k:=w_{k-1}-\tau \lambda_k v_k.
 \end{align}
   \end{itemize}
\noindent
\end{minipage}
} 
\mgap
\mgap

\noindent
{\bf Remarks.}
\begin{itemize}
\item[(i)] Algorithm \ref{inertial.hpe} clearly combines the inertial proximal point (PP) and the HPE methods
\eqref{eq:iner.intr} and  \eqref{eq:v.hpe}, respectively.
It reduces to \eqref{eq:iner.intr} when
$\sigma=0$ and $\tau=1$. Indeed, in this case, using \eqref{eq:err.hpe}, \eqref{eq:err.hpe2} and Proposition \ref{pr:teps}(d),
we find $0\in \lambda_k T(z_k)+z_k-[z_{k-1}+\alpha_{k-1}(z_{k-1}-z_{k-2})]$ for all $k\geq 1$
(cf. iteration $(\mathcal{A}_0)$--$(\mathcal{A}_2)$ in \cite{alv.att-iner.svva01}).
\item[(ii)] A similar inertial relaxed relative-error PP algorithm was proposed and analyzed by Alvarez in \cite{alv-wea.siam03}. We emphasize that in contrast to Algorithm \ref{inertial.hpe}, the algorithm proposed by Alvarez is a 
projective-type algorithm (see, e.g., \cite{sol.sva-hyb.jca99}) and it is based on a different mechanism of iteration.

%
%
%
\item[(iii)] Algorithm \ref{inertial.hpe} generalizes the HPE method of Solodov and Svaiter \cite{mon.sva-hpe.siam10} and (a special instance of) the under-relaxed HPE method of Svaiter \cite{pre-print-benar}. Indeed, the HPE method is obtained by letting $\alpha=0$ and  $\tau=1$, in which case
$w_{k-1}=z_{k-1}$, while the under-relaxed HPE method (with $t_k\equiv \tau$, in the notation of the latter reference) appears whenever $\alpha=0$ in Algorithm \ref{inertial.hpe}.
\item[(iv)] As we mentioned in Subsection \ref{subsec:hpe.intr}, an inertial HPE-type method was recently proposed
and studied by Bot and Csetnek in \cite{bot.cse-hyb.nfao15}. We refer the reader to Subsection \ref{subsec:hpe.intr} for a discussion of the contributions of this paper in the light of the latter reference, regarding the HPE-type methods.
\item[(v)] We emphasize that, in contrast to the analysis presented in this
work -- see Theorems \ref{th:pic} and \ref{th:erg} --, in all cases of inertial-type algorithms which were mentioned
in remarks (i)--(iv) no iteration-complexity analysis has been obtained.
\item[(vi)]  Step  3 of Algorithm \ref{inertial.hpe}  does  not  specify  how  to  compute $\lambda_k>0$ and  the triple
$(\tilde z_k, v_k,\varepsilon_k)$ satisfying \eqref{eq:err.hpe}, their computation  depending  on
the instance of the method under consideration. In this regard,  Proposition \ref{pr:fb.e.hpe} shows, in particular, how the evaluation of a cocoercive
(monotone) point-to-point operator naturally produces such triples.

\end{itemize}

The next three results, especially Proposition \ref{lm:tech}, will be important
for proving the main results on convergence and iteration-complexity of Algorithm \ref{inertial.hpe}.

\begin{proposition}
\label{inq:err2}
Let $\{z_k\}$, $\{\tilde z_k\}$ and $\{w_k\}$ be generated by \emph{Algorithm \ref{inertial.hpe}}
and define, for all $k\geq 1$,
\begin{align}
  \label{eq:def.thetak}
 s_k:=\max\left\{\eta \norm{z_k-w_{k-1}}^2,(1-\sigma^2)\tau\norm{\tilde z_k-w_{k-1}}^2\right\}
\end{align}
where
\begin{align}
 \label{eq:def.etak}
  \eta:=\eta(\sigma,\tau) :=\dfrac{2}{(1+\sigma)\tau }-1>0.
\end{align}
Then, for any  $z^*\in T^{-1}(0)$,
\begin{align}
 \label{eq:103}
 \norm{z_k-z^*}^2+s_k \leq \norm{w_{k-1}-z^*}^2\qquad \forall k \geq 1.
\end{align}
\end{proposition}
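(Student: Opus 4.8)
The plan is to observe that, because $s_k$ in \eqref{eq:def.thetak} is the maximum of two quantities, the target estimate \eqref{eq:103} is equivalent to the conjunction of the two inequalities
\[
\norm{z_k-z^*}^2+(1-\sigma^2)\tau\norm{\tilde z_k-w_{k-1}}^2\leq \norm{w_{k-1}-z^*}^2
\]
and
\[
\norm{z_k-z^*}^2+\eta\norm{z_k-w_{k-1}}^2\leq \norm{w_{k-1}-z^*}^2,
\]
which I will establish separately.

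First I would carry out the core estimate. Using $z_k=w_{k-1}-\tau\lambda_k v_k$ from \eqref{eq:err.hpe2}, I expand $\norm{z_k-z^*}^2$ and split $w_{k-1}-z^*=(w_{k-1}-\tilde z_k)+(\tilde z_k-z^*)$. Since $z^*\in T^{-1}(0)$ and $v_k\in T^{\varepsilon_k}(\tilde z_k)$, the definition \eqref{eq:def.teps} of the enlargement applied with $(z',v')=(z^*,0)$ gives $\inner{\tilde z_k-z^*}{v_k}\geq -\varepsilon_k$. Substituting this bound and then rewriting the relative-error criterion \eqref{eq:err.hpe} in the form $2\lambda_k\varepsilon_k\leq \sigma^2\norm{\tilde z_k-w_{k-1}}^2-\norm{\lambda_k v_k+\tilde z_k-w_{k-1}}^2$, the cross terms $\inner{\lambda_k v_k}{\tilde z_k-w_{k-1}}$ cancel, and I expect to reach
\[
\norm{z_k-z^*}^2-\norm{w_{k-1}-z^*}^2\leq -(1-\sigma^2)\tau\norm{\tilde z_k-w_{k-1}}^2-\frac{1-\tau}{\tau}\,\norm{z_k-w_{k-1}}^2,
\]
after converting $\tau(1-\tau)\lambda_k^2\norm{v_k}^2$ into $\tfrac{1-\tau}{\tau}\norm{z_k-w_{k-1}}^2$ via $\norm{z_k-w_{k-1}}^2=\tau^2\lambda_k^2\norm{v_k}^2$.

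Granting this core identity, the first displayed inequality is immediate since $\tau\leq 1$ makes the last term nonpositive. For the second inequality it suffices to show $(1-\sigma^2)\tau\norm{\tilde z_k-w_{k-1}}^2+\tfrac{1-\tau}{\tau}\norm{z_k-w_{k-1}}^2\geq \eta\norm{z_k-w_{k-1}}^2$. Here I would invoke \eqref{eq:err.hpe} once more: dropping the nonnegative term $2\lambda_k\varepsilon_k$ yields $\norm{\lambda_k v_k+\tilde z_k-w_{k-1}}\leq \sigma\norm{\tilde z_k-w_{k-1}}$, so the triangle inequality gives $\norm{\lambda_k v_k}\leq(1+\sigma)\norm{\tilde z_k-w_{k-1}}$; combined with $\norm{z_k-w_{k-1}}=\tau\norm{\lambda_k v_k}$ this produces $\norm{z_k-w_{k-1}}^2\leq(1+\sigma)^2\tau^2\norm{\tilde z_k-w_{k-1}}^2$. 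Using this to lower-bound $\norm{\tilde z_k-w_{k-1}}^2$ in the first term and simplifying with $1-\sigma^2=(1-\sigma)(1+\sigma)$, the total coefficient of $\norm{z_k-w_{k-1}}^2$ collapses to exactly $\frac{2}{(1+\sigma)\tau}-1=\eta$, matching \eqref{eq:def.etak}.

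I expect the difficulty to be bookkeeping rather than conceptual. The two load-bearing facts are the cancellation of the $\inner{\lambda_k v_k}{\tilde z_k-w_{k-1}}$ cross terms in the core estimate and the norm bound relating $\norm{z_k-w_{k-1}}$ to $\norm{\tilde z_k-w_{k-1}}$. The delicate point is verifying that the coefficient emerging in the final step equals $\eta$ \emph{exactly}, not merely up to a constant; this exact cancellation is precisely what dictates the definition of $\eta(\sigma,\tau)$ in \eqref{eq:def.etak}. Finally, the positivity $\eta>0$ asserted there follows from $(1+\sigma)\tau<2$, which holds since $\sigma<1$ and $\tau\leq 1$.
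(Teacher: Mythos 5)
Your proposal is correct and follows essentially the same route as the paper: your ``core estimate'' is exactly the inequality the paper obtains by invoking Lemma \ref{pr:ben}(b) (Svaiter's lemma, whose appendix proof uses the same expansion, enlargement bound $\inner{\tilde z_k-z^*}{v_k}\geq-\varepsilon_k$, and cross-term cancellation you describe), and your handling of the $\eta$-term via the triangle-inequality bound $\norm{\lambda_k v_k}\leq(1+\sigma)\norm{\tilde z_k-w_{k-1}}$ together with the identity $\norm{z_k-w_{k-1}}=\tau\norm{\lambda_k v_k}$ reproduces the paper's inequalities \eqref{eq:101}--\eqref{eq:102}, including the exact collapse of the coefficient to $\tfrac{2}{(1+\sigma)\tau}-1=\eta$. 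The only cosmetic difference is that you inline the lemma's proof instead of citing it.
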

\begin{proof}
 Using \eqref{eq:err.hpe}, \eqref{eq:err.hpe2} and Lemma \ref{pr:ben}(b) we obtain
\begin{align}
  \label{eq:100}
   \norm{z_k-z^*}^2+(1-\sigma^2)\tau \norm{\tilde z_k-w_{k-1}}^2+\tau (1-\tau )\norm{\lambda_k v_k}^2\leq
\norm{w_{k-1}-z^*}^2.
\end{align}
Note now that from  \eqref{eq:err.hpe2} and \eqref{eq:err.hpe} we have
\begin{align*}
 \nonumber
 \tau^{-1}\norm{z_k-w_{k-1}}=\norm{\lambda_k v_k}&\leq \norm{\lambda_k v_k+\tilde z_k-w_{k-1}}+
 \norm{\tilde z_k- w_{k-1}}\\
      &\leq (1+\sigma)\norm{\tilde z_k-w_{k-1}},
\end{align*}
which, in turn, gives
\begin{align}
 \label{eq:101}
 (1-\sigma^2)\tau \norm{\tilde z_k-w_{k-1}}^2
           &\geq \dfrac{(1-\sigma)}{\tau (1+\sigma)}\norm{z_k-w_{k-1}}^2.
\end{align}
On the other hand, \eqref{eq:err.hpe2} yields
\begin{align}
 \label{eq:102}
 \tau (1-\tau)\norm{\lambda_k v_k}^2=\tau^{-1}(1-\tau)\norm{\tau\lambda_k v_k}^2
      =\tau^{-1}(1-\tau)\norm{z_k-w_{k-1}}^2.
\end{align}
To finish the proof, note that \eqref{eq:103} is a direct consequence of \eqref{eq:def.thetak},
\eqref{eq:100}--\eqref{eq:102} and \eqref{eq:def.etak}.
\end{proof}

\begin{lemma}
 \label{lm:wz}
 Let $\{z_k\}$, $\{w_k\}$ and $\{\alpha_k\}$ be generated by \emph{Algorithm \ref{inertial.hpe}}
and let $z\in \HH$. Then, for all $k\geq 1$,
\begin{align*}
 \norm{w_{k-1}-z}^2=(1+\alpha_{k-1})\norm{z_{k-1}-z}^2-\alpha_{k-1}\norm{z_{k-2}-z}^2
+\alpha_{k-1}(1+\alpha_{k-1})\norm{z_{k-1}-z_{k-2}}^2.
\end{align*}
\end{lemma}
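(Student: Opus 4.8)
The plan is to reduce the claimed identity directly to the quadratic-expansion formula \eqref{eq:str2} by rewriting $w_{k-1}-z$ as an affine combination of $z_{k-1}-z$ and $z_{k-2}-z$ whose coefficients sum to one. First I would subtract $z$ from both sides of the definition \eqref{eq:ext.hpe} of $w_{k-1}$ and regroup, obtaining
$$
w_{k-1}-z=(1+\alpha_{k-1})(z_{k-1}-z)-\alpha_{k-1}(z_{k-2}-z).
$$
Since the coefficients $1+\alpha_{k-1}$ and $-\alpha_{k-1}$ add up to one, the right-hand side is exactly of the form $pw+(1-p)z$ appearing in \eqref{eq:str2}, with the choice $p:=1+\alpha_{k-1}$, $w:=z_{k-1}-z$ and with the second vector taken to be $z_{k-2}-z$.

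Next I would simply apply \eqref{eq:str2} with this value of $p$. The two squared-norm terms produce $(1+\alpha_{k-1})\norm{z_{k-1}-z}^2$ and $(1-p)\norm{z_{k-2}-z}^2=-\alpha_{k-1}\norm{z_{k-2}-z}^2$, while the cross term $-p(1-p)\norm{w-z}^2$ becomes $\alpha_{k-1}(1+\alpha_{k-1})\norm{z_{k-1}-z_{k-2}}^2$ after noting that $(z_{k-1}-z)-(z_{k-2}-z)=z_{k-1}-z_{k-2}$ and $-p(1-p)=-(1+\alpha_{k-1})(-\alpha_{k-1})=\alpha_{k-1}(1+\alpha_{k-1})$. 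Collecting these three contributions yields precisely the asserted formula.

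There is essentially no genuine obstacle here: the result is a one-line instance of \eqref{eq:str2} once the affine-combination rewriting is in place. The only point requiring mild care is the sign bookkeeping, since $p=1+\alpha_{k-1}>1$ forces $1-p=-\alpha_{k-1}<0$, so the middle term inherits a minus sign while the product $-p(1-p)$ is positive, giving the $+\alpha_{k-1}(1+\alpha_{k-1})$ coefficient on the final cross term.
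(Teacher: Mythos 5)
Your proof is correct. It rests on the same key identity \eqref{eq:str2} as the paper's proof, but applies it in the opposite direction: you expand $\norm{w_{k-1}-z}^2$ directly, writing $w_{k-1}-z=(1+\alpha_{k-1})(z_{k-1}-z)-\alpha_{k-1}(z_{k-2}-z)$ and taking $p=1+\alpha_{k-1}>1$, whereas the paper solves the definition \eqref{eq:ext.hpe} for $z_{k-1}-z$, writes it as the convex combination $(1+\alpha_{k-1})^{-1}(w_{k-1}-z)+\alpha_{k-1}(1+\alpha_{k-1})^{-1}(z_{k-2}-z)$, applies \eqref{eq:str2} with $p=(1+\alpha_{k-1})^{-1}\in\,]0,1]$ together with the auxiliary identity $w_{k-1}-z_{k-2}=(1+\alpha_{k-1})(z_{k-1}-z_{k-2})$, and then rearranges (multiplying through by $1+\alpha_{k-1}$) to reach the stated form. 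Your route is marginally more direct, since no rearrangement or auxiliary identity is needed, but it hinges on \eqref{eq:str2} being valid for $p$ outside $[0,1]$; this is unproblematic because the paper states \eqref{eq:str2} for arbitrary $p\in\R$ (it is a polynomial identity in $p$), and your sign bookkeeping, namely $1-p=-\alpha_{k-1}$ and $-p(1-p)=\alpha_{k-1}(1+\alpha_{k-1})$, is exactly right.
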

\begin{proof}
 From \eqref{eq:ext.hpe} we have $z_{k-1}-z=(1+\alpha_{k-1})^{-1}(w_{k-1}-z)+
 \alpha_{k-1}(1+\alpha_{k-1})^{-1}(z_{k-2}-z)$ and $w_{k-1}-z_{k-2}=(1+\alpha_{k-1})(z_{k-1}-z_{k-2})$, which
combined with the property \eqref{eq:str2} yield the desired identity.
\end{proof}

\begin{proposition}
 \label{lm:tech}
 Let $\{z_k\}$, $\{w_k\}$ and $\{\alpha_k\}$ be generated by \emph{Algorithm \ref{inertial.hpe}}
and let $\{s_k\}$ be as in \eqref{eq:def.thetak}. Let also $z^*\in T^{-1}(0)$ and define
 \begin{align}
 \label{eq:def.phi}
  (\forall k\geq -1)\quad \varphi_k:=\norm{z_k-z^*}^2\;\;\mbox{and}\;\;\;
 (\forall k\geq 1)\quad
\delta_k:=\alpha_{k-1}(1+\alpha_{k-1})\norm{z_{k-1}-z_{k-2}}^2.
\end{align}
Then, $\varphi_0=\varphi_{-1}$ and
 \begin{align}
  \label{eq:ineq.tech}
 \varphi_k-\varphi_{k-1}+s_k \leq
   \alpha_{k-1}(\varphi_{k-1}-\varphi_{k-2})+\delta_k\qquad \forall k\geq 1,
\end{align}
i.e., the sequences $\{\varphi_k\}$, $\{s_k\}$, $\{\alpha_k\}$ and $\{\delta_k\}$ satisfy the assumptions of \emph{Lemma \ref{lm:alv.att}}.
\end{proposition}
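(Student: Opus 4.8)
The plan is to derive the key recursive inequality \eqref{eq:ineq.tech} by simply chaining together the two preceding results, Proposition \ref{inq:err2} and Lemma \ref{lm:wz}, after first disposing of the trivial initialization. For the initialization, I would observe that the input of Algorithm \ref{inertial.hpe} prescribes $z_0=z_{-1}$, so that $\varphi_0=\norm{z_0-z^*}^2=\norm{z_{-1}-z^*}^2=\varphi_{-1}$ immediately, by the very definition of $\varphi_k$ in \eqref{eq:def.phi}.

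For the main inequality, I would start from Proposition \ref{inq:err2}, which asserts that for every $k\geq 1$ and every $z^*\in T^{-1}(0)$,
\[
\norm{z_k-z^*}^2+s_k \leq \norm{w_{k-1}-z^*}^2.
\]
Rewriting the left-hand side using $\varphi_k=\norm{z_k-z^*}^2$, this reads $\varphi_k+s_k\leq\norm{w_{k-1}-z^*}^2$. The task then reduces to expanding the right-hand side in terms of the quantities $\varphi_{k-1}$, $\varphi_{k-2}$ and $\delta_k$.

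To that end, I would invoke Lemma \ref{lm:wz} with the choice $z:=z^*$, which yields the identity
\[
\norm{w_{k-1}-z^*}^2=(1+\alpha_{k-1})\norm{z_{k-1}-z^*}^2-\alpha_{k-1}\norm{z_{k-2}-z^*}^2+\alpha_{k-1}(1+\alpha_{k-1})\norm{z_{k-1}-z_{k-2}}^2.
\]
Recognizing the first two norms as $\varphi_{k-1}$ and $\varphi_{k-2}$ and the last term as exactly $\delta_k$ from \eqref{eq:def.phi}, this becomes
\[
\norm{w_{k-1}-z^*}^2=(1+\alpha_{k-1})\varphi_{k-1}-\alpha_{k-1}\varphi_{k-2}+\delta_k.
\]
Substituting this into $\varphi_k+s_k\leq\norm{w_{k-1}-z^*}^2$ and regrouping the coefficient of $\alpha_{k-1}$ produces
\[
\varphi_k-\varphi_{k-1}+s_k\leq\alpha_{k-1}(\varphi_{k-1}-\varphi_{k-2})+\delta_k,
\]
which is precisely \eqref{eq:ineq.tech}. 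Finally, to justify the closing remark that the hypotheses of Lemma \ref{lm:alv.att} hold, I would note that $s_k\geq 0$ (it is a maximum of two nonnegative terms by \eqref{eq:def.thetak}), $\delta_k\geq 0$, and $\alpha_{k-1}\in[0,\alpha]$ with $\alpha<1$ by construction in Algorithm \ref{inertial.hpe}; these sign and boundedness conditions, together with \eqref{eq:ineq.tech} and $\varphi_0=\varphi_{-1}$, are exactly what Lemma \ref{lm:alv.att} requires.

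There is no serious obstacle here: the proposition is essentially a bookkeeping step that repackages Proposition \ref{inq:err2} and Lemma \ref{lm:wz} into a form amenable to the abstract sequence lemma. The only point demanding care is the correct algebraic regrouping after substitution—specifically ensuring that the term $+\alpha_{k-1}\varphi_{k-1}$ arising from $(1+\alpha_{k-1})\varphi_{k-1}$ is paired with $-\alpha_{k-1}\varphi_{k-2}$ to form the telescoping-friendly factor $\alpha_{k-1}(\varphi_{k-1}-\varphi_{k-2})$—and verifying that the residual term $\delta_k$ matches its definition verbatim.
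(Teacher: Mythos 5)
Your proposal is correct and follows exactly the paper's own argument: combine Proposition \ref{inq:err2} with Lemma \ref{lm:wz} evaluated at $z=z^*$, identify the resulting terms with $\varphi_{k-1}$, $\varphi_{k-2}$ and $\delta_k$, and note that $z_0=z_{-1}$ gives $\varphi_0=\varphi_{-1}$. The only difference is that you spell out the nonnegativity and boundedness checks for Lemma \ref{lm:alv.att}, which the paper leaves implicit.
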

\begin{proof}
Using Lemma \ref{lm:wz} with $z=z^*$ and \eqref{eq:def.phi} we
obtain, for all $k\geq 1$,
\begin{align*}
 \norm{w_{k-1}-z^*}^2=(1+\alpha_{k-1})\varphi_{k-1}-\alpha_{k-1}\varphi_{k-2}+\delta_k,
\end{align*}
which combined with Proposition \ref{inq:err2} and the definition of $\varphi_k$ in
\eqref{eq:def.phi} yields \eqref{eq:ineq.tech}. The identity $\varphi_0=\varphi_{-1}$ follows
from the fact that $z_0=z_{-1}$ and the first definition in \eqref{eq:def.phi}.
\end{proof}

Next we present the first result on the asymptotic convergence of Algorithm \ref{inertial.hpe}.

\begin{theorem}[first result on the weak convergence of Algorithm \ref{inertial.hpe}]
 \label{th:main}
 Let $\{z_k\}$, $\{\lambda_k\}$ and $\{\alpha_k\}$ be generated by \emph{Algorithm \ref{inertial.hpe}}.
%
If the following holds
\begin{align}
 \label{eq:th:main.01}
 \sum_{k=0}^\infty\,\alpha_{k}\norm{z_{k}-z_{k-1}}^2<+\infty
\end{align}
and, additionally, $\lambda_k\geq \underline{\lambda}>0$, for all $k\geq 1$, then the sequence $\{z_k\}$ converges weakly to a solution of  the monotone inclusion problem \eqref{eq:mip2}.
\end{theorem}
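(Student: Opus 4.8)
The plan is to establish weak convergence via Opial's lemma, for which two ingredients are needed: (1) that $\lim_{k\to\infty}\norm{z_k-z^*}$ exists for every $z^*\in T^{-1}(0)$, and (2) that every weak cluster point of $\{z_k\}$ lies in $T^{-1}(0)$. Boundedness of $\{z_k\}$, hence the existence of weak cluster points, comes for free from (1).

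For ingredient (1), I would invoke the machinery already assembled. Proposition~\ref{lm:tech} shows that, for any fixed $z^*\in T^{-1}(0)$, the sequences $\varphi_k=\norm{z_k-z^*}^2$, $s_k$, $\alpha_k$ and $\delta_k$ satisfy the recursion \eqref{eq:ineq.tech}, and thus the hypotheses of Lemma~\ref{lm:alv.att}. The only point to verify is $\sum_k\delta_k<\infty$: since $\alpha_{k-1}\le\alpha<1$, the definition of $\delta_k$ in \eqref{eq:def.phi} gives $\delta_k\le 2\alpha_{k-1}\norm{z_{k-1}-z_{k-2}}^2$, which after reindexing is summable precisely by hypothesis \eqref{eq:th:main.01} (the shifted term at index $0$ vanishes because $z_0=z_{-1}$). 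Lemma~\ref{lm:alv.att} then yields two conclusions at once: $\lim_k\varphi_k$ exists, which is ingredient (1), and $\sum_k s_k<\infty$.

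For ingredient (2), I would first convert $\sum_k s_k<\infty$ into the vanishing of residuals. By the definition \eqref{eq:def.thetak} of $s_k$ and the fact that $\eta>0$, summability of $\{s_k\}$ forces both $\norm{z_k-w_{k-1}}\to 0$ and $\norm{\tilde z_k-w_{k-1}}\to 0$, hence $\norm{z_k-\tilde z_k}\to 0$. Feeding $\norm{\tilde z_k-w_{k-1}}\to 0$ into the error criterion \eqref{eq:err.hpe} shows $\norm{\lambda_k v_k+\tilde z_k-w_{k-1}}\to 0$ and $\lambda_k\varepsilon_k\to 0$; combining the first with $\norm{\tilde z_k-w_{k-1}}\to 0$ gives $\norm{\lambda_k v_k}\to 0$. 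Here is where the assumption $\lambda_k\ge\underline{\lambda}>0$ enters, upgrading $\norm{\lambda_k v_k}\to 0$ to $v_k\to 0$ and $\lambda_k\varepsilon_k\to 0$ to $\varepsilon_k\to 0$. Now let $z$ be any weak cluster point, say $z_{k_j}\rightharpoonup z$; since $\norm{z_k-\tilde z_k}\to 0$ we also have $\tilde z_{k_j}\rightharpoonup z$. Because $v_{k_j}\in T^{\varepsilon_{k_j}}(\tilde z_{k_j})$ with $v_{k_j}\to 0$ and $\varepsilon_{k_j}\to 0$, the weak-strong closedness property Proposition~\ref{pr:teps}(e) gives $0\in T^{0}(z)$, and maximal monotonicity via Proposition~\ref{pr:teps}(d) yields $0\in T(z)$, i.e. $z\in T^{-1}(0)$.

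With (1) and (2) in hand, Opial's lemma delivers weak convergence of the whole sequence $\{z_k\}$ to a point of $T^{-1}(0)$. I expect the main obstacle to be ingredient (2): since $\{z_k\}$ is controlled only weakly, no continuity of $T$ is available, so the crucial leverage is the weak-strong closedness of the $\varepsilon$-enlargement in Proposition~\ref{pr:teps}(e), which is exactly what lets the approximate inclusions $v_k\in T^{\varepsilon_k}(\tilde z_k)$ pass to the limit. A secondary point demanding care is the correct reindexing matching $\sum_k\delta_k$ to hypothesis \eqref{eq:th:main.01}, together with the role of $\lambda_k\ge\underline{\lambda}$ in separating the vanishing of $\lambda_k v_k$ and $\lambda_k\varepsilon_k$ into the vanishing of $v_k$ and $\varepsilon_k$.
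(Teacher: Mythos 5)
Your proposal is correct and follows essentially the same route as the paper's proof: Proposition~\ref{lm:tech} combined with Lemma~\ref{lm:alv.att} to obtain both the existence of $\lim_{k\to\infty}\norm{z_k-z^*}$ and $\sum_k s_k<+\infty$, then the vanishing of $\norm{z_k-w_{k-1}}$, $\norm{\tilde z_k-w_{k-1}}$, $\norm{v_k}$ and $\varepsilon_k$ via \eqref{eq:err.hpe} and $\lambda_k\geq\underline{\lambda}>0$, then Proposition~\ref{pr:teps}(e) to place weak cluster points in $T^{-1}(0)$, and finally Opial's Lemma~\ref{lm:opial}. Your explicit verification of the summability of $\{\delta_k\}$ and the reindexing against \eqref{eq:th:main.01} only makes explicit what the paper leaves implicit.
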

\begin{proof}
Using Proposition \ref{lm:tech}, \eqref{eq:th:main.01}, the fact that $\alpha_k\leq \alpha$ for all $k\geq 1$ and Lemma \ref{lm:alv.att}, one concludes that
(i) $\lim_{k\to \infty}\,\|z_k-z^*\|$ exist for every $z^*\in \Omega:=T^{-1}(0)$, and
$\sum_{k=1}^\infty\,s_k<+\infty$, which gives (ii) $\lim_{k\to \infty}\,s_k=0$, where $\{s_k\}$ is as in \eqref{eq:def.thetak}.
%
%
In particular, $\{z_k\}$ is bounded.  Using (ii), \eqref{eq:err.hpe}--\eqref{eq:def.thetak} and the assumption
 $\lambda_k\geq \underline{\lambda}>0$ for all $k\geq 1$, we find
\begin{align}
  \label{eq:ter01}
  \lim_{k\to \infty}\,\|z_k-w_{k-1}\|=
 \lim_{k\to \infty}\,\|\tilde z_k-w_{k-1}\|=\lim_{k\to \infty}\,\|v_k\|=\lim_{k\to \infty}\,\varepsilon_k=0.
\end{align}
Now let $z^\infty\in \HH$ be a weak cluster point of $\{z_k\}$ (recall that it is bounded).
Note that it follows from \eqref{eq:ter01} that $z^\infty$ is also a (weak) cluster point of $\{\tilde z_k\}$ 
and let $\{\tilde z_{k_j}\}$ be such that $w-\lim_{j\to \infty}\,\tilde z_{k_j}=z^{\infty}$.
Using \eqref{eq:ter01} and  the inclusion in \eqref{eq:err.hpe} we obtain
\begin{align}
 (\forall j\geq 1)\;\;v_{k_j}\in T^{\varepsilon_{k_j}}(\tilde z_{k_j}),\;\; \lim_{j\to \infty}\,v_{k_j}=0,\;\;
  \lim_{j\to \infty}\,\varepsilon_{k_j}=0\;\;\mbox{and}\;\;  w-\lim_{j\to \infty}\,\tilde z_{k_j}=z^{\infty},
\end{align}
which, in turn, combined with Proposition \ref{pr:teps}(e) yields $z^\infty\in \Omega=T^{-1}(0)$, and so
the desired result follows from (i) and Lemma \ref{lm:opial}.
\end{proof}

\noindent
{\bf Remark.} Condition \eqref{eq:th:main.01} appeared for the first time in \cite{alv.att-iner.svva01}, and since then it has become a standard assumption in the
asymptotic convergence
analysis of different inertial PP-type algorithms. Next, we present a sufficient condition
on the input parameters $(\alpha,\sigma,\tau)$ in Algorithm \ref{inertial.hpe} to ensure \eqref{eq:th:main.01} holds
(see Theorems \ref{th:wc}, \ref{th:pic} and \ref{th:erg}).

\mgap
\mgap

\noindent
{\bf Assumption (A):}
$(\alpha,\sigma,\tau)\in  [0,1[\times [0,1[\times  ]0,1]$ and $\{\alpha_k\}$ satisfy the following (for some $\beta>0$):
\begin{align}
 \label{eq:alpha}
 0\leq \alpha_{k-1}\leq \alpha_{k}\leq \alpha<\beta<1\qquad \forall k\geq 1
\end{align}
and
\begin{align}
\label{eq:betatau}
\tau=\tau(\sigma,\beta):=\dfrac{2(\beta'-1)^2}{(1+\sigma)\left[2(\beta'-1)^2+3\beta'-1\right]},
\end{align}
where
\begin{align}
  \label{eq:betalinha}
 \beta':=\max\left\{\beta,\dfrac{2(1-\sigma)}{3-\sigma+\sqrt{9+2\sigma-7\sigma^2}}\right\}\in
\left[\dfrac{2(1-\sigma)}{3-\sigma+\sqrt{9+2\sigma-7\sigma^2}},1\right[.
\end{align}

\mgap
\mgap

\noindent
{\bf Remarks.}
\begin{itemize}
 \item[(i)] Conditions \eqref{eq:alpha}--\eqref{eq:betalinha} will be crucial to prove convergence and iteration-complexity
               of the algorithms presented and studied in this paper; see, e.g.,
              Theorems \ref{th:wc}, \ref{th:pic} and  \ref{th:erg}, and Section \ref{sec:tseng}.
\item[(ii)] Note that by letting $\sigma=0$, which  by the first remark following Algorithm \ref{inertial.hpe} means
that it reduces to an under-relaxed version of the (exact) Alvarez--Attouch's inertial PP method, we obtain that \eqref{eq:alpha}--\eqref{eq:betalinha}
are now simply given by: $0\leq \alpha_{k-1}\leq \alpha_k\leq \alpha<\beta<1$, for all $k\geq 1$, and
\begin{align}
 \label{eq:betatau22}
\tau=\tau(\beta):=\dfrac{2(\beta'-1)^2}{2(\beta'-1)^2+3\beta'-1},\qquad \beta':=\max\left\{\beta,1/3\right\}\in
\left[1/3,1\right[.
\end{align}
In particular, in this case, we have $\tau=\tau(0,1/3)=1$ whenever $\beta=1/3$ in \eqref{eq:alpha}, which
corresponds to the standard upper bound on $\{\alpha_k\}$ which has been used in different works in the
current literature (see Subsection \ref{subsec:iner.intr} for a discussion). Hence, even in the setting of \emph{exact} inertial PP methods,  conditions
\eqref{eq:alpha}--\eqref{eq:betalinha} generalize the usual assumption \eqref{eq:alpha.ser}. See Figure 1.
\item[(iii)] As we mentioned earlier, an inertial HPE-type method was proposed and studied by
Bot and Csetnek in \cite{bot.cse-hyb.nfao15}, where asymptotic convergence is proved under the assumption $\alpha(5+4\sigma^2)+\sigma^2<1$
on $\alpha,\sigma\in [0,1[$. Note that, in this case, $\alpha\approx 0$ whenever $\sigma\approx 1$. This contrasts to the conditions \eqref{eq:alpha}--\eqref{eq:betalinha}, which, in particular yield
$\tau=\tau(\sigma, 1/3)=1/(1+\sigma)>0.5$ (uniformly on $\sigma$) when $\beta=1/3$ in \eqref{eq:alpha}. 
This may become especially useful in numerical
implementations of Algorithm \ref{inertial.hpe}, since $\sigma=0.99$ has been usually employed in the recent literature
on HPE-type methods (see, e.g., \cite{eck.sil-pra.mp13,eck.yao-rel.mp17,mon.ort.sva-fir.mpc14,mon.ort.sva-imp.coap14}).
Further, \eqref{eq:alpha}--\eqref{eq:betalinha} allow the upper bound $\alpha$  on $\{\alpha_k\}$ to be chosen arbitrarily close to 1, at the price of performing under-relaxed steps with
the explicitly computed $\tau=\tau(\sigma,\beta)$ as in \eqref{eq:betatau}. 
See Figure 1.
%
\end{itemize}

\begin{figure}[H]
\centering
\includegraphics[scale=0.7]{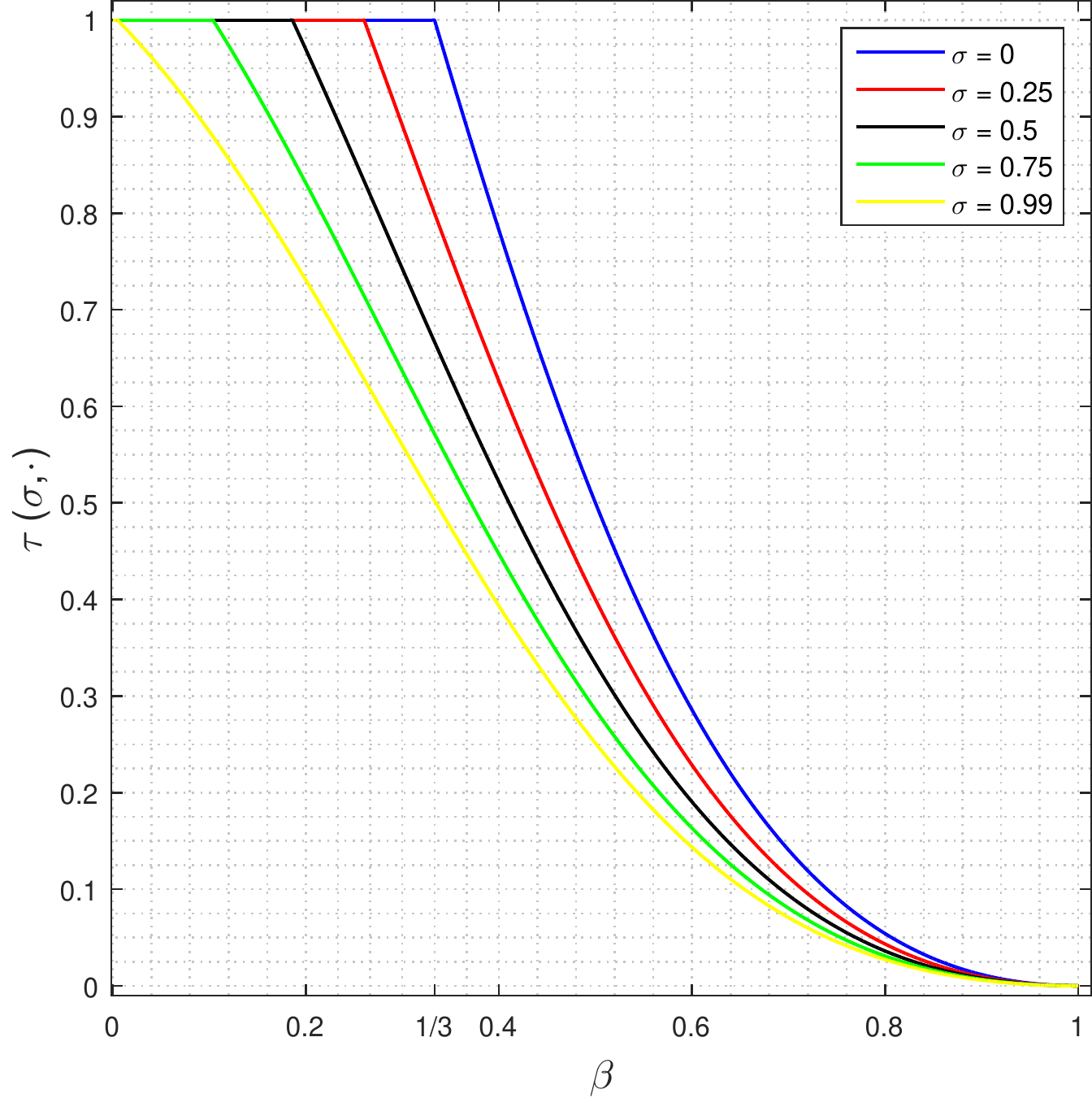}
\caption{Function $]0,1[\ni \beta\mapsto \tau(\sigma, \beta)\in ]0,1[$ as in \eqref{eq:betatau} for $\sigma\in \{0,0.25,0.5,0.75.0.99\}$. Note that $\tau(\sigma, 1/3)\geq 0.5$ for all $\sigma\in [0,1[$. See the second and third remarks following Assumption {\bf (A)}.}
\label{fig:Figure}
\end{figure}



\begin{theorem}[second result on the weak convergence of Algorithm \ref{inertial.hpe}]
 \label{th:wc}
 %
Under the \emph{Assumption ${\bf (A)}$} on \emph{Algorithm \ref{inertial.hpe}}, let $\eta>0$ be as in \eqref{eq:def.etak} and define the quadratic real function:
\begin{align}
  \label{eq:def.q}
 q(\alpha'):=(\eta-1) \alpha'^{\,\,2}-(1+2\eta)\alpha' +\eta \quad \forall \alpha'\in \R.
\end{align}
Then, $q(\alpha)>0$ and, for every $z^*\in T^{-1}(0)$,
\begin{align}
  \label{eq:sum}
 \sum_{j=1}^k\norm{z_j-z_{j-1}}^2\leq
\dfrac{2\,\norm{z_0-z^*}^2}{(1-\alpha)q(\alpha)} \qquad \forall k\geq 1.
\end{align}
As a consequence, it follows that under the assumption ${\bf (A)}$ the sequence $\{z_k\}$
generated by \emph{Algorithm \ref{inertial.hpe}} converges weakly to a solution of the monotone inclusion
problem \eqref{eq:mip2} whenever $\lambda_k\geq \underline{\lambda}>0$ for all $k\geq 1$.
%
%
%
%
%
%
%
%
%
%
%
\end{theorem}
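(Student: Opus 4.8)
The plan is to split the argument into three parts: (i) verify the strict positivity $q(\alpha)>0$ from Assumption ${\bf (A)}$; (ii) derive the uniform bound \eqref{eq:sum} by summing the fundamental recursion \eqref{eq:ineq.tech} of Proposition \ref{lm:tech}; and (iii) deduce weak convergence from Theorem \ref{th:main}. For (i), the key observation is that the relaxation parameter $\tau=\tau(\sigma,\beta)$ in \eqref{eq:betatau} has been reverse-engineered precisely so that $\beta'$ from \eqref{eq:betalinha} is a root of $q$. Indeed, rewriting $q(\alpha')=\eta(1-\alpha')^2-\alpha'(1+\alpha')$ and imposing $q(\beta')=0$ gives $\eta=\beta'(1+\beta')/(1-\beta')^{2}$; substituting the definition $\eta=2/[(1+\sigma)\tau]-1$ of \eqref{eq:def.etak} and solving for $\tau$ reproduces \eqref{eq:betatau} exactly, after simplifying $2(\beta'-1)^2+3\beta'-1=2(\beta')^2-\beta'+1$. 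Since \eqref{eq:betalinha} forces $\beta'\geq 1/3$, one has $q(0)=\eta>0$, the leading coefficient $\eta-1$ is nonnegative (being $\geq 0$ exactly when $\beta'\geq 1/3$), and $q(1)=-2<0$; hence $\beta'$ is the smaller root of $q$ and $q>0$ on $[0,\beta')$. As $\alpha<\beta\leq\beta'$ by \eqref{eq:alpha}--\eqref{eq:betalinha}, I conclude $q(\alpha)>0$.

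For (ii), I would work with $\varphi_k,\delta_k$ from \eqref{eq:def.phi} and $s_k$ from \eqref{eq:def.thetak}, abbreviating the displacements via $z_k-w_{k-1}=(z_k-z_{k-1})-\alpha_{k-1}(z_{k-1}-z_{k-2})$ (from \eqref{eq:ext.hpe}). The telescoping device is $\Gamma_k:=\varphi_k-\alpha_k\varphi_{k-1}$: combining \eqref{eq:ineq.tech} with the monotonicity $\alpha_{k-1}\leq\alpha_k$ of Assumption ${\bf (A)}$ and $\varphi_{k-1}\geq 0$ gives $\Gamma_k-\Gamma_{k-1}\leq\delta_k-s_k$, which sums (using $\varphi_0=\varphi_{-1}$) to $\varphi_n+\sum_{j=1}^n s_j\leq C\varphi_0+\alpha\varphi_{n-1}+\sum_{j=1}^n\delta_j$ for a constant $C\geq 1$. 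I then bound $\sum_j\delta_j\leq\alpha(1+\alpha)\sum_j\norm{z_{j-1}-z_{j-2}}^2$ from above, and $\sum_j s_j$ from below through $s_j\geq\eta\norm{z_j-w_{j-1}}^2=\eta\norm{(z_j-z_{j-1})-\alpha_{j-1}(z_{j-1}-z_{j-2})}^2$, expanding the square and using $2\inner{a}{b}\leq\norm{a}^2+\norm{b}^2$ together with $z_0=z_{-1}$.

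The heart of (ii) is that, writing $D_n:=\sum_{j=1}^n\norm{z_j-z_{j-1}}^2$ and collecting in the resulting inequality the coefficient of $D_n$ and of $D_{n-1}$, their difference equals exactly $q(\alpha)=\eta(1-\alpha)^2-\alpha(1+\alpha)$; this produces an inequality of the form $\varphi_n+q(\alpha)D_{n-1}+(\text{nonneg})\leq C\varphi_0+\alpha\varphi_{n-1}$. Dropping the nonnegative terms first yields the scalar recursion $\varphi_n\leq C\varphi_0+\alpha\varphi_{n-1}$, whence $\{\varphi_n\}$ is bounded by a fixed multiple of $\varphi_0=\norm{z_0-z^*}^2$ by induction; re-inserting this bound for $\varphi_{n-1}$ and invoking $q(\alpha)>0$ from (i) gives the uniform estimate \eqref{eq:sum} on $D_n$.

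Finally, (iii) is immediate: \eqref{eq:sum} shows $\sum_j\norm{z_j-z_{j-1}}^2<\infty$, so in particular $\sum_j\alpha_j\norm{z_j-z_{j-1}}^2<\infty$, which is precisely hypothesis \eqref{eq:th:main.01}; together with $\lambda_k\geq\underline{\lambda}>0$, Theorem \ref{th:main} then delivers weak convergence of $\{z_k\}$ to a solution of \eqref{eq:mip2}. I expect the main obstacle to lie in the bookkeeping of (ii): making the cross-term and boundary estimates sharp enough that the two coefficients differ by \emph{exactly} $q(\alpha)$ (rather than a smaller quantity that might fail to be positive), and correctly controlling the boundary term $\alpha\varphi_{n-1}$ through the auxiliary boundedness of $\{\varphi_n\}$.
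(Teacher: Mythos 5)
Your part (iii) and the overall skeleton (root-identification for $q$, summation of \eqref{eq:ineq.tech}, boundedness of $\{\varphi_k\}$, re-insertion) do follow the paper, but the ``heart of (ii)'' as you describe it contains a genuine gap. After inserting $s_j\geq\eta\big[(1-\alpha_{j-1})\norm{z_j-z_{j-1}}^2-\alpha_{j-1}(1-\alpha_{j-1})\norm{z_{j-1}-z_{j-2}}^2\big]$ and $\delta_j\leq\alpha_{j-1}(1+\alpha_{j-1})\norm{z_{j-1}-z_{j-2}}^2$, the terms multiplying $\norm{z_{j-1}-z_{j-2}}^2$ carry the index-dependent coefficient $\gamma(\alpha_{j-1}):=\eta\alpha_{j-1}(1-\alpha_{j-1})+\alpha_{j-1}(1+\alpha_{j-1})=(1+\eta)\alpha_{j-1}+(1-\eta)\alpha_{j-1}^2$. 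To ``collect the coefficient of $D_{n-1}$'' as a single number $\gamma(\alpha)$ — which is what makes the difference of coefficients come out to exactly $q(\alpha)=\eta(1-\alpha)^2-\alpha(1+\alpha)$ — you need $\gamma(\alpha_{j-1})\leq\gamma(\alpha)$ for all $j$, i.e.\ monotonicity of $\gamma$ on $[0,\alpha]$. This fails under Assumption {\bf (A)}: when $\eta>3$ (equivalently $\beta'>1/2$) the parabola $\gamma$ peaks at $x^*=(1+\eta)/[2(\eta-1)]<1$, and Assumption {\bf (A)} permits $x^*<\alpha<\beta'$. Concretely, for $\beta'=0.8$ one has $\eta=36$, $x^*=37/70\approx0.53$, $\max\gamma=\gamma(x^*)\approx 9.78$; taking $\alpha=0.79$ and a nondecreasing sequence with $\alpha_0\approx x^*$, the best uniform coefficient difference is $\eta(1-\alpha)-\gamma(x^*)\approx 7.56-9.78<0$, even though $q(\alpha)\approx 0.17>0$. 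So your scheme produces a possibly negative constant in place of $q(\alpha)$ and \eqref{eq:sum} does not follow. The paper circumvents exactly this: it telescopes the per-index quantity $\mu_k=\varphi_k-\alpha_{k-1}\varphi_{k-1}+\gamma_k\norm{z_k-z_{k-1}}^2$, so the index-dependent $\gamma_{k-1}$-term cancels identically (no comparison of $\gamma$'s across indices is ever made), and the only monotonicity used is $q(\alpha_k)\geq q(\alpha)$ — valid because $q$, unlike $\gamma$, is decreasing on $[0,\beta']$ (its vertex $(1+2\eta)/[2(\eta-1)]$ lies beyond the root $\beta'$). Your sums can be repaired by pairing them index-by-index, giving net coefficients $\eta(1-\alpha_{j-1})-\gamma(\alpha_j)\geq q(\alpha_j)\geq q(\alpha)$, but that monotonicity-of-$q$ step is precisely the ingredient missing from your write-up.

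There is also a (repairable) error in part (i): the claim that \eqref{eq:betalinha} forces $\beta'\geq 1/3$, hence $\eta-1\geq 0$ and $\beta'$ is the \emph{smaller} root, is false. The lower bound in \eqref{eq:betalinha} equals $1/3$ only at $\sigma=0$ and decreases to $0$ as $\sigma\to 1$; e.g.\ $\sigma=0.99$, $\beta=0.1$ give $\beta'=0.1$ and $\eta=\beta'(1+\beta')/(1-\beta')^2<1$, so $q$ opens downward and $\beta'$ is then its \emph{larger} root (this is why the paper says ``either the smallest or the largest root''). The conclusion survives in both cases, since $q(0)=\eta>0$ and $q(1)=-2<0$ force $q>0$ on $[0,\beta')$ whatever the sign of $\eta-1$, and your identification of $\beta'$ as a root via substituting $\eta=2/[(1+\sigma)\tau]-1$ into $q(\beta')=0$ is correct and is a clean equivalent of the paper's use of Lemma \ref{lm:inverse}. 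Part (iii), reducing weak convergence to Theorem \ref{th:main} via \eqref{eq:th:main.01}, matches the paper exactly.
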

\begin{proof}
Using \eqref{eq:ext.hpe}, the Cauchy-Schwarz inequality and the Young inequality
$2ab\leq a^2+b^2$ with $a:=\norm{z_k-z_{k-1}}$ and $b:=\norm{z_{k-1}-z_{k-2}}$
we find
\begin{align*}
 \norm{z_k-w_{k-1}}^2
   \nonumber
    &=\norm{z_k-z_{k-1}}^2+\alpha_{k-1}^2\norm{z_{k-1}-z_{k-2}}^2-2\alpha_{k-1}\inner{z_{k}-z_{k-1}}{z_{k-1}-z_{k-2}}\\
  \nonumber
    & \geq \norm{z_k-z_{k-1}}^2+\alpha_{k-1}^2\norm{z_{k-1}-z_{k-2}}^2-\alpha_{k-1} \left(2\norm{z_{k}-z_{k-1}}\norm{z_{k-1}-z_{k-2}}\right)\\
    & \geq (1-\alpha_{k-1}) \norm{z_k-z_{k-1}}^2-\alpha_{k-1}(1-\alpha_{k-1})\norm{z_{k-1}-z_{k-2}}^2,
\end{align*}
which combined with \eqref{eq:ineq.tech} and \eqref{eq:def.thetak}, and after some algebraic manipulations, yields
\begin{align}
 \label{eq:301}
 \varphi_k-\varphi_{k-1}-\alpha_{k-1}(\varphi_{k-1}-\varphi_{k-2})-\gamma_{k-1} \norm{z_{k-1}-z_{k-2}}^2
\leq -\eta (1-\alpha_{k-1})\norm{z_k-
  z_{k-1}}^2\quad \forall k\geq 1,
\end{align}
where
\begin{align}
 \label{eq:304}
  \gamma_{k}:=(1-\eta)\alpha_k^2+(1+\eta)\alpha_k\qquad \forall k\geq 0.
\end{align}
 Define,
\begin{align}
 \label{eq:305}
\mu_0:=(1-\alpha_0)\varphi_0\geq 0,\qquad
\mu_k:=\varphi_k-\alpha_{k-1}\varphi_{k-1}+\gamma_{k}\norm{z_k-z_{k-1}}^2\quad \forall k\geq 1,
\end{align}
where $\varphi_k$ is as in \eqref{eq:def.phi}.
 Using \eqref{eq:def.q}, the assumption that $\{\alpha_k\}$ is nondecreasing  (see \eqref{eq:alpha}) and \eqref{eq:301}--\eqref{eq:305} we obtain, for all $k\geq 1$,
\begin{align}
 \label{eq:307}
 \nonumber
 \mu_k-\mu_{k-1}
\nonumber
  &\leq
\left[\varphi_k-\varphi_{k-1}-\alpha_{k-1}(\varphi_{k-1}-\varphi_{k-2})-
\gamma_{k-1}\norm{z_{k-1}-z_{k-2}}^2\right]+\gamma_{k}\norm{z_k-z_{k-1}}^2\\
 \nonumber
 &\leq \left[\gamma_{k}-\eta (1-\alpha_{k})\right]\norm{z_k-z_{k-1}}^2\\
\nonumber
&=-\left[(\eta-1)\alpha_{k}^2-(1+2\eta)\alpha_{k}+\eta\right]\norm{z_k-z_{k-1}}^2\\
&=-q(\alpha_k)\norm{z_k-z_{k-1}}^2.
\end{align}
Note now that from \eqref{eq:betatau} and Lemma \ref{lm:inverse} we have
\begin{align*}
 \beta'=\dfrac{4-2(1+\sigma)\tau}{4-(1+\sigma)\tau+\sqrt{(1+\sigma)\tau\left[16-7(1+\sigma)\tau\right]}},
\end{align*}
which, in turn, combined with the definition of $\eta>0$ in \eqref{eq:def.etak}, and after some algebraic calculations, gives
\begin{align*}
 \beta'=\dfrac{2\eta}{2\eta+1+\sqrt{8\eta+1}}.
\end{align*}
The latter identity implies, in particular, that $\beta'$ is either the smallest or the largest root of
the quadratic function $q(\cdot)$.
%
%
Hence, from \eqref{eq:alpha} and the fact that $\beta'\geq \beta$ (see \eqref{eq:betalinha})
we obtain
\begin{align*}
 q(\alpha_{k})\geq q(\alpha)>q(\beta')=0.
\end{align*}
The above inequalities combined with \eqref{eq:307} yield
\begin{align}
  \label{eq:ineq.mu}
 \norm{z_k-z_{k-1}}^2\leq \dfrac{1}{q(\alpha)}(\mu_{k-1}-\mu_{k}),\quad \forall k\geq 1,
\end{align}
which, in turn, combined with \eqref{eq:alpha} and the definition of $\mu_k$ in \eqref{eq:305},  gives
\begin{align}
  \label{eq:sum.q}
 \nonumber
 \sum_{j=1}^k\,\norm{z_j-z_{j-1}}^2&\leq \dfrac{1}{q(\alpha)}(\mu_0-\mu_k),\\
             &\leq \dfrac{1}{q(\alpha)}(\mu_0+\alpha \varphi_{k-1}) \quad \forall k\geq 1.
\end{align}
Note now that \eqref{eq:ineq.mu}, \eqref{eq:alpha} and \eqref{eq:305} also yield
\begin{align*}
  \mu_0\geq \ldots \geq \mu_{k}=&\varphi_{k}-\alpha_{k-1}\varphi_{k-1}+\gamma_{k}\|z_{k}-z_{k-1}\|^{2} \\
  \geq&  \varphi_{k}-\alpha\varphi_{k-1},\quad \forall k\geq 1,
\end{align*}
and so,
\begin{equation}
 \label{H13}
    \varphi_{k}\leq \alpha^{k}\varphi_{0}+\frac{\mu_{0}}{1-\alpha}\leq \varphi_{0}+\frac{\mu_{0}}{1-\alpha} \qquad \forall k\geq 0.
\end{equation}
%
%
Hence, \eqref{eq:sum} follows directly from \eqref{eq:sum.q}, \eqref{H13}, the definition of $\mu_0$ in \eqref{eq:305} and
the definition of $\varphi_0$ in \eqref{eq:def.phi}.
On the other hand, the second statement of the theorem follows from \eqref{eq:sum} and Theorem \ref{th:main} (recall that
$\alpha_k\leq \alpha<1$ for all $k\geq 0$).
\end{proof}

\mgap
\noindent
{\bf Remark.} A quadratic function similar to $q(\cdot)$, as defined in \eqref{eq:def.q}, was also considered by
Alvarez in \cite{alv-wea.siam03}. As we mentioned in the second remark following Algorithm 1, the algorithm studied in the
later reference is different of the corresponding algorithm presented in this work, namely Algorithm \ref{inertial.hpe}.  Moreover, note that if  $\eta=1$, then $q(\alpha')=1-3\alpha'$ (cf. \cite{alv.att-iner.svva01}).

\mgap

\begin{corollary}
 \label{cor:dom}
   Under the \emph{Assumption ${\bf (A)}$} on \emph{Algorithm \ref{inertial.hpe}},  let $\eta>0$ and $q(\cdot)$ be as in
\eqref{eq:def.etak} and \eqref{eq:def.q}, respectively, and let $z^*\in T^{-1}(0)$.
Then, for all $k\geq 1$,
 \[
   \norm{z_k-z^*}^2+\sum_{j=1}^k\,\tau \Big(\max\left\{\eta \tau \norm{\lambda_jv_j}^2,(1-\sigma^2)
  \norm{\tilde z_j-w_{j-1}}^2\right\}\Big)\leq \left(1+\dfrac{2\alpha(1+\alpha)}{(1-\alpha)^2q(\alpha)}
\right)\norm{z_0-z^*}^2.
 \]
\end{corollary}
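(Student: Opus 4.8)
The plan is to recognize that the quantity being summed is exactly the term $s_j$ from \eqref{eq:def.thetak}, to sum the basic recursion \eqref{eq:ineq.tech} and telescope, and then to control the two resulting right-hand side sums using the summability estimate \eqref{eq:sum} already furnished by Theorem \ref{th:wc}. The first observation makes the statement transparent: since $z_j-w_{j-1}=-\tau\lambda_j v_j$ by \eqref{eq:err.hpe2}, we have $\norm{z_j-w_{j-1}}^2=\tau^2\norm{\lambda_j v_j}^2$, so factoring $\tau$ out of both entries of the maximum in \eqref{eq:def.thetak} gives $s_j=\tau\max\left\{\eta\tau\norm{\lambda_j v_j}^2,(1-\sigma^2)\norm{\tilde z_j-w_{j-1}}^2\right\}$. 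Hence the left-hand side of the corollary is precisely $\varphi_k+\sum_{j=1}^k s_j$, with $\varphi_k=\norm{z_k-z^*}^2$ as in \eqref{eq:def.phi}.

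Next I would sum \eqref{eq:ineq.tech} over $j=1,\dots,k$. The left-most differences telescope and, using $\varphi_0=\varphi_{-1}$ from Proposition \ref{lm:tech}, this produces
\begin{align*}
\varphi_k-\varphi_0+\sum_{j=1}^k s_j\leq \sum_{j=1}^k\alpha_{j-1}(\varphi_{j-1}-\varphi_{j-2})+\sum_{j=1}^k\delta_j .
\end{align*}
The term $\sum_{j=1}^k\delta_j$ is the easy one: from \eqref{eq:def.phi} and \eqref{eq:alpha} we have $\delta_j\leq\alpha(1+\alpha)\norm{z_{j-1}-z_{j-2}}^2$, the $j=1$ summand vanishes because $z_0=z_{-1}$, and \eqref{eq:sum} then gives $\sum_{j=1}^k\delta_j\leq \alpha(1+\alpha)\,\dfrac{2\varphi_0}{(1-\alpha)q(\alpha)}$.

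The delicate step, which I expect to be the main obstacle, is bounding $\sum_{j=1}^k\alpha_{j-1}(\varphi_{j-1}-\varphi_{j-2})$, because the increments $\varphi_{j-1}-\varphi_{j-2}$ need not be nonnegative, so one cannot merely replace $\alpha_{j-1}$ by $\alpha$ and telescope. Here I would use the Alvarez--Attouch mechanism underlying Lemma \ref{lm:alv.att}: discarding the nonnegative $s_j$ in \eqref{eq:ineq.tech} gives $\varphi_j-\varphi_{j-1}\leq\alpha(\varphi_{j-1}-\varphi_{j-2})_+ratio+\delta_j$ after bounding $\alpha_{j-1}\leq\alpha$ on the (automatically nonnegative) right-hand side, so that, writing $\theta_j:=(\varphi_j-\varphi_{j-1})_+$, one obtains the scalar recursion $\theta_j\leq\alpha\theta_{j-1}+\delta_j$ with $\theta_0=0$. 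Iterating and summing the resulting geometric series yields $\sum_{j\geq 1}\theta_j\leq\dfrac{1}{1-\alpha}\sum_{j\geq 1}\delta_j$, whence $\sum_{j=1}^k\alpha_{j-1}(\varphi_{j-1}-\varphi_{j-2})\leq\alpha\sum_{j\geq 1}\theta_j\leq\dfrac{\alpha}{1-\alpha}\sum_{j\geq 1}\delta_j$.

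Finally I would insert the $\delta$-bound into both estimates. Combining,
\begin{align*}
\varphi_k-\varphi_0+\sum_{j=1}^k s_j\leq\left(\dfrac{\alpha}{1-\alpha}+1\right)\alpha(1+\alpha)\,\dfrac{2\varphi_0}{(1-\alpha)q(\alpha)}=\dfrac{\alpha(1+\alpha)}{1-\alpha}\cdot\dfrac{2\varphi_0}{(1-\alpha)q(\alpha)},
\end{align*}
and rearranging gives exactly the factor $1+\dfrac{2\alpha(1+\alpha)}{(1-\alpha)^2 q(\alpha)}$ in front of $\norm{z_0-z^*}^2=\varphi_0$, together with the identification of $s_j$ with the summand, which is the claim. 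The only points requiring care are the index bookkeeping in the telescoping and geometric sums and the (harmless) replacement of finite partial sums of $\delta_j$ by the full bound coming from \eqref{eq:sum}; note also $q(\alpha)>0$ is guaranteed by Theorem \ref{th:wc}, so the constant is well defined.
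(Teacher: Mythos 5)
Your proof is correct and follows essentially the same route as the paper: the paper's proof consists of identifying the summand with $s_j$ from \eqref{eq:def.thetak}, invoking Proposition \ref{lm:tech} together with Lemma \ref{lm:alv.att}(a) to get $\varphi_k+\sum_{j=1}^k s_j\leq \varphi_0+\frac{1}{1-\alpha}\sum_{j=1}^k\delta_j$, and then bounding $\sum_j \delta_j$ via \eqref{eq:sum}, exactly as you do. The only difference is that you re-derive Lemma \ref{lm:alv.att}(a) inline (the positive-part recursion $\theta_j\leq\alpha\theta_{j-1}+\delta_j$ and geometric-series summation, i.e., the Alvarez--Attouch mechanism) rather than citing it, which is the same argument in expanded form.
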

\begin{proof}
 Using Proposition \ref{lm:tech} and Lemma \ref{lm:alv.att}(a) we conclude that
 \eqref{eq:alv.att01} holds with $\{s_k\}$, $\{\varphi_k\}$ and $\{\delta_k\}$ as
 in \eqref{eq:def.thetak} and  \eqref{eq:def.phi}, which gives that the desired result follows
from \eqref{eq:alv.att01} and \eqref{eq:sum}.
\end{proof}

\mgap

Next we present the first result on nonasymptotic global convergence rates/iteration-complexity of Algorithm \ref{inertial.hpe}.

\begin{theorem}[global $\mathcal{O}(1/\sqrt{k})$ pointwise convergence rate of Algorithm \ref{inertial.hpe}]
 \label{th:pic}
  Under the \emph{Assumption ${\bf (A)}$} on \emph{Algorithm \ref{inertial.hpe}}, let
 $\eta>0$ and $q(\cdot)$ be as in \eqref{eq:def.etak} and \eqref{eq:def.q}, respectively,
and let $d_0$ denote the distance of $z_0$ to $T^{-1}(0)$. Assume that $\lambda_k\geq \underline{\lambda}>0$
for all $k\geq 1$.
Then, for every $k\geq 1$, there exists $i\in \{1,\dots, k\}$ such that
\begin{align}
  \label{eq:th:pic01}
 &v_i\in T^{\varepsilon_i}(\tilde z_i),\\[3mm]
 \label{eq:th:pic02}
 &\norm{v_i}\leq \dfrac{d_0}{\underline{\lambda}\tau\,\sqrt{k}}
\sqrt{\eta^{-1}\left(1+\dfrac{2\alpha(1+\alpha)}{(1-\alpha)^2q(\alpha)}\right)},\\[3mm]
 \label{eq:th:pic03}
&\varepsilon_i\leq \dfrac{\sigma d_0^2}{2(1-\sigma^2)\underline{\lambda}\tau \,k}
\left(1+\dfrac{2\alpha(1+\alpha)}{(1-\alpha)^2q(\alpha)}\right).
\end{align}
\end{theorem}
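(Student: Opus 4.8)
The plan is to derive everything from the summable estimate in Corollary \ref{cor:dom} by a standard pigeonhole (averaging) argument, after choosing the closest solution to replace $\norm{z_0-z^*}$ by the distance $d_0$.

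First I would fix $z^*\in T^{-1}(0)$ to be the projection of $z_0$ onto $T^{-1}(0)$ (which exists and is unique because $T$ maximal monotone makes $T^{-1}(0)$ closed and convex), so that $\norm{z_0-z^*}=d_0$. Writing $C:=1+\dfrac{2\alpha(1+\alpha)}{(1-\alpha)^2q(\alpha)}$ and discarding the nonnegative term $\norm{z_k-z^*}^2$ in Corollary \ref{cor:dom}, I obtain
\begin{align*}
\sum_{j=1}^k \tau\Big(\max\left\{\eta\tau\norm{\lambda_j v_j}^2,(1-\sigma^2)\norm{\tilde z_j-w_{j-1}}^2\right\}\Big)\leq C\, d_0^2.
\end{align*}
Since a sum of $k$ nonnegative terms is at least $k$ times its smallest term, there exists an index $i\in\{1,\dots,k\}$ whose summand is bounded by the average, i.e.
\begin{align*}
\tau\Big(\max\left\{\eta\tau\norm{\lambda_i v_i}^2,(1-\sigma^2)\norm{\tilde z_i-w_{i-1}}^2\right\}\Big)\leq \dfrac{C\, d_0^2}{k}.
\end{align*}
The inclusion \eqref{eq:th:pic01} is then immediate from the inclusion in \eqref{eq:err.hpe}.

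Next I would split the two branches of the maximum to extract the two separate estimates. Bounding the max below by its first argument gives $\eta\tau^2\norm{\lambda_i v_i}^2\leq C d_0^2/k$; using $\lambda_i\geq\underline{\lambda}$ and solving for $\norm{v_i}$ yields \eqref{eq:th:pic02}. Bounding the max below by its second argument gives $\tau(1-\sigma^2)\norm{\tilde z_i-w_{i-1}}^2\leq C d_0^2/k$, that is $\norm{\tilde z_i-w_{i-1}}^2\leq C d_0^2/\big(\tau(1-\sigma^2)k\big)$. To convert this into a bound on $\varepsilon_i$ I would return to the error criterion \eqref{eq:err.hpe}: dropping the nonnegative term $\norm{\lambda_i v_i+\tilde z_i-w_{i-1}}^2$ leaves $2\lambda_i\varepsilon_i\leq\sigma^2\norm{\tilde z_i-w_{i-1}}^2$, whence $\varepsilon_i\leq\dfrac{\sigma^2}{2\underline{\lambda}}\norm{\tilde z_i-w_{i-1}}^2$. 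Combining the last two inequalities and finally using $\sigma^2\leq\sigma$ (valid since $\sigma\in[0,1[$) produces exactly \eqref{eq:th:pic03}.

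Since the heavy lifting, namely the Lyapunov/telescoping analysis behind Corollary \ref{cor:dom}, is already in place, I do not expect a genuine obstacle here; the argument is a pigeonhole extraction followed by algebra. The only two points requiring slight care are the justification that the infimum defining $d_0$ is attained (so that one may legitimately take $\norm{z_0-z^*}=d_0$ rather than the weaker $\geq d_0$), and keeping the two branches of the maximum strictly separate, so that the $\norm{v_i}$ bound uses only the term $\eta\tau\norm{\lambda_i v_i}^2$ while the $\varepsilon_i$ bound uses only $(1-\sigma^2)\norm{\tilde z_i-w_{i-1}}^2$ together with the error inequality \eqref{eq:err.hpe}.
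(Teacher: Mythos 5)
Your proposal is correct and follows essentially the same route as the paper's own proof: fix $z^*\in T^{-1}(0)$ attaining $d_0$, apply Corollary \ref{cor:dom}, extract an index $i\in\{1,\dots,k\}$ by the pigeonhole/averaging argument, and then convert the two branches of the maximum into \eqref{eq:th:pic02} and \eqref{eq:th:pic03} using $\lambda_i\geq\underline{\lambda}$ and the error criterion \eqref{eq:err.hpe} (including the step $2\lambda_i\varepsilon_i\leq\sigma^2\norm{\tilde z_i-w_{i-1}}^2$ and $\sigma^2\leq\sigma$). The paper compresses this into ``some simple algebraic manipulations''; your write-up merely makes those manipulations, and the attainment of the distance $d_0$, explicit.
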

\begin{proof}
 Let $z^*\in T^{-1}(0)$ be such that $d_0=\norm{z_0-z^*}$. It follows from Corollary \ref{cor:dom} that, for every
$k\geq 1$, there exists $i\in \{1,\dots, k\}$ such that
 \[
  \tau\, k \Big(\max\left\{\eta \tau \norm{\lambda_iv_i}^2,(1-\sigma^2)
  \norm{\tilde z_i-w_{i-1}}^2\right\}\Big)\leq \left(1+\dfrac{2\alpha(1+\alpha)}{(1-\alpha)^2q(\alpha)}
\right)d_0^2,
 \]
which combined with the assumption $\lambda_i\geq \underline{\lambda}>0$ and \eqref{eq:err.hpe}, and after some
simple algebraic manipulations, yields the desired result.
\end{proof}
%

%
%

\noindent
{\bf Remarks.}
\begin{itemize}
 \item[(i)] Theorem \ref{th:pic} provides a global $\mathcal{O}(1/\sqrt{k})$ \emph{pontwise} convergence rate
and ensures, in particular, that
for given tolerances $\rho,\epsilon>0$, Algorithm \ref{inertial.hpe} finds
a triple $(z,v,\varepsilon)$ satisfying \eqref{eq:appsol} after performing
at most
\begin{align*}
  \mathcal{O}\left(\max\left\{\left\lceil \dfrac{d_0^2}{\underline{\lambda}^2\rho^2}\right\rceil,
\left\lceil\dfrac{d_0^2}{\underline{\lambda}\epsilon}\right\rceil\right\}\right)
\end{align*}
 iterations.
  \item[(ii)] If $\alpha=0$ and $\tau=1$, in which case Algorithm \ref{inertial.hpe} reduces to the HPE method of Solodov and Svaiter, then it follows that Theorem \ref{th:pic} reduces to \cite[Theorem 4.4(a)]{mon.sva-hpe.siam10}.
\item[(iii)] Analogous global $\mathcal{O}(1/\sqrt{k})$ \emph{pontwise} convergence rates were also obtained in
\cite{che.cha.ma-ine.sjis15,che.ma.yan-gen.sijo15} for inertial-type algorithms for variational inequality and
convex optimization problems.
\end{itemize}


In order to study the \emph{ergodic} iteration-complexity of Algorithm \ref{inertial.hpe}, we need
to define the following.

The \emph{aggregate stepsize sequence}
$\{\Lambda_k\}$ and the \emph{ergodic sequences}
$\{\tilde {z}_k^a\}$, $\{\tilde v_k^a\}$,
$\{\varepsilon_k^a\}$ associated
to $\{\lambda_k\}$ and
$\{\tilde {z}_k\}$,  $\{v_k\}$, and
$\{\varepsilon_k\}$ are, respectively, for $k\geq 1$,
\begin{align}
\label{eq:d.eg}
  \begin{aligned}
    &\Lambda_k:=\sum_{j=1}^k\, \lambda_j\,,\\
    &\tilde {z}_k^{\,a}:= \frac{1}{\;\Lambda_k}\;
   \sum_{j=1}^k\,\lambda_j\, \tilde {z}_j, \quad
   v_k^{\,a}:= \frac{1}{\;\Lambda_k}\;\sum_{j=1}^k\, \lambda_j\, v_j,\\
   &\varepsilon_k^{\,a}:=
    \frac{1}{\;\Lambda_k}\;\sum_{j=1}^k\,\lambda_j (\varepsilon_j
    +\inner{\tilde {z}_j-
     \tilde {z}_k^{\,a}}{v_j-v_k^{\,a}})=
   \frac{1}{\;\Lambda_k}\;\sum_{j=1}^k\,\lambda_j (\varepsilon_j
    +\inner{\tilde {z}_j-
     \tilde {z}_k^{\,a}}{v_j}).
  \end{aligned}
\end{align}

\mgap

Next we study the \emph{ergodic} iteration-complexity of Algorithm \ref{inertial.hpe} under the assumption
that $\alpha_k\equiv \alpha$ in \eqref{eq:ext.hpe}.

\begin{theorem}[global $\mathcal{O}(1/k)$ ergodic convergence rate of Algorithm \ref{inertial.hpe}]
 \label{th:erg}
  Under the \emph{Assumption ${\bf (A)}$} on \emph{Algorithm \ref{inertial.hpe}}
 and, additionally, the assumption that $\alpha_k\equiv \alpha$, let $\{\tilde z_k^a\}$, $\{v_k^a\}$ and $\{\varepsilon_k^a\}$ be as in \eqref{eq:d.eg} and let
  $d_0$ denote the distance of $z_0$ to $T^{-1}(0)$.
Let also $\eta>0$ and $q(\cdot)$ be as in \eqref{eq:def.etak} and \eqref{eq:def.q}, respectively, and assume
that $\lambda_k\geq \underline{\lambda}>0$ for all $k\geq 1$.

Then, for all $k\geq 1$,
  \begin{align}
   \label{eq:seg08}
   & v_k^a\in T^{\varepsilon_k^a}(\tilde z_k^a),\\[2mm]
    \label{eq:seg07}
   &\norm{v_k^a}\leq \dfrac{2(1+\alpha)d_0}{\underline{\lambda}\tau\,k}
         \sqrt{1+\dfrac{2\alpha(1+\alpha)}{(1-\alpha)^2q(\alpha)}},
\\[2mm]
    \label{eq:seg06}
   &\varepsilon_k^a\leq \dfrac{2\sqrt{2}d_0^2}{\underline{\lambda}\tau\,k}
  \left(1+\dfrac{2\alpha(1+\alpha)}{(1-\alpha)^2q(\alpha)}\right)
  \left(1+\dfrac{\sigma}{\sqrt{(1-\sigma^2)\tau}}+\sqrt{4+\dfrac{(1-\tau)^2}{\eta\tau^2}}\right).
 \end{align}
\end{theorem}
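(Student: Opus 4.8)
The plan is to establish the three conclusions separately, all from the per-iteration estimates already in hand, chiefly Corollary \ref{cor:dom} (which controls $\norm{z_k-z^*}^2$ together with the summable quantities $\sum_j\norm{\lambda_jv_j}^2$ and $\sum_j\norm{\tilde z_j-w_{j-1}}^2$) and the telescoping bound \eqref{eq:sum} of Theorem \ref{th:wc}. Fix $z^*\in T^{-1}(0)$ with $d_0=\norm{z_0-z^*}$ and write $C:=1+\frac{2\alpha(1+\alpha)}{(1-\alpha)^2q(\alpha)}$, so Corollary \ref{cor:dom} gives $\norm{z_k-z^*}\le\sqrt{C}\,d_0$ for all $k$, together with $\sum_{j=1}^k\norm{\lambda_jv_j}^2\le\frac{Cd_0^2}{\eta\tau^2}$ and $\sum_{j=1}^k\norm{\tilde z_j-w_{j-1}}^2\le\frac{Cd_0^2}{\tau(1-\sigma^2)}$. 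The inclusion \eqref{eq:seg08} is then immediate: applying the transportation formula (Theorem \ref{th:tf}) to $\{(\tilde z_j,v_j,\varepsilon_j)\}_{j=1}^k$ with convex weights $\lambda_j/\Lambda_k$ produces, by the very definitions in \eqref{eq:d.eg}, the averaged triple $(\tilde z_k^a,v_k^a,\varepsilon_k^a)$, whence $v_k^a\in T^{\varepsilon_k^a}(\tilde z_k^a)$.

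For \eqref{eq:seg07} I would use the relaxation step \eqref{eq:err.hpe2} in the form $\lambda_jv_j=\tau^{-1}(w_{j-1}-z_j)$ together with \eqref{eq:ext.hpe} and the standing hypothesis $\alpha_j\equiv\alpha$. Substituting $w_{j-1}=z_{j-1}+\alpha(z_{j-1}-z_{j-2})$ and summing, the sum telescopes (this is precisely where the constant inertial parameter is used) to $\sum_{j=1}^k\lambda_jv_j=\tau^{-1}\big[(1-\alpha)(z_0-z^*)-(z_k-z^*)+\alpha(z_{k-1}-z^*)\big]$. Taking norms, using the triangle inequality with $\norm{z_0-z^*}=d_0$ and $\norm{z_k-z^*},\norm{z_{k-1}-z^*}\le\sqrt{C}\,d_0$, dividing by $\Lambda_k\ge k\underline{\lambda}$, and absorbing $(1-\alpha)$ into $(1+\alpha)\sqrt{C}$ (legitimate since $C\ge1$) yields \eqref{eq:seg07}.

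The estimate \eqref{eq:seg06} is the heart of the proof. The starting point is the identity obtained from \eqref{eq:d.eg} by adding and subtracting $z^*$: for every $z$, $\varepsilon_k^a+\inner{\tilde z_k^a-z}{v_k^a}=\frac{1}{\Lambda_k}\sum_{j=1}^k\lambda_j\big(\varepsilon_j+\inner{\tilde z_j-z}{v_j}\big)$, which with $z=z^*$ rearranges to $\varepsilon_k^a=\frac{1}{\Lambda_k}\sum_{j=1}^k\lambda_j(\varepsilon_j+\inner{\tilde z_j-z^*}{v_j})-\inner{\tilde z_k^a-z^*}{v_k^a}$. I would split $\tilde z_j-z^*=(\tilde z_j-w_{j-1})+(w_{j-1}-z^*)$. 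For the $w_{j-1}-z^*$ part, writing $\lambda_jv_j=\tau^{-1}(w_{j-1}-z_j)$ and polarizing turns $\sum_j\lambda_j\inner{w_{j-1}-z^*}{v_j}$ into $\tfrac{1}{2\tau}\sum_j(\norm{w_{j-1}-z^*}^2-\norm{z_j-z^*}^2)+\tfrac{\tau}{2}\sum_j\norm{\lambda_jv_j}^2$; the crucial observation is that, by Lemma \ref{lm:wz} and \eqref{eq:sum}, the first (a priori non-telescoping) sum collapses to $(1-\alpha)\varphi_0+\alpha\varphi_{k-1}-\varphi_k+\sum_j\delta_j$ and is bounded by \emph{exactly} $Cd_0^2$, while the second is controlled by the summability of $\norm{\lambda_jv_j}^2$. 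The term $\sum_j\lambda_j\varepsilon_j$ is handled by the error criterion \eqref{eq:err.hpe}, and $\sum_j\lambda_j\inner{\tilde z_j-w_{j-1}}{v_j}$ by Cauchy--Schwarz against the two summable quantities. Finally $\inner{\tilde z_k^a-z^*}{v_k^a}$ is estimated by $\norm{\tilde z_k^a-z^*}\,\norm{v_k^a}$, using \eqref{eq:seg07} for $\norm{v_k^a}$ and a uniform bound on $\norm{\tilde z_k^a-z^*}$ derived from $\tilde z_j-z_j=r_j-(1-\tau)\lambda_jv_j$ with $\norm{r_j}\le\sigma\norm{\tilde z_j-w_{j-1}}$. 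Dividing by $\Lambda_k\ge k\underline{\lambda}$ and collecting the three contributions produces the factor $1+\frac{\sigma}{\sqrt{(1-\sigma^2)\tau}}+\sqrt{4+\frac{(1-\tau)^2}{\eta\tau^2}}$.

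The main obstacle is this last estimate: keeping the constants in \eqref{eq:seg06} sharp, in particular assembling the $\norm{\tilde z_k^a-z^*}$ bound (the source of the $\sqrt{4+(1-\tau)^2/(\eta\tau^2)}$ and $\sigma/\sqrt{(1-\sigma^2)\tau}$ terms) while reconciling the $\lambda_j$-weighted ergodic averages with the \emph{unweighted} summability bounds from Corollary \ref{cor:dom}. The telescoping simplification of $\sum_j(\norm{w_{j-1}-z^*}^2-\norm{z_j-z^*}^2)$ down to $Cd_0^2$ is the decisive step that keeps every quantity uniformly bounded in $k$, and hence delivers the $\mathcal{O}(1/k)$ rate once divided by $\Lambda_k\ge k\underline{\lambda}$.
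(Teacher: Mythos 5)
Your treatment of \eqref{eq:seg08} and \eqref{eq:seg07} is correct and essentially the paper's own argument: the transportation formula gives the inclusion, and the telescoped identity $\tau\lambda_j v_j=z_{j-1}-z_j+\alpha(z_{j-1}-z_{j-2})$ together with the uniform bound $\norm{z_j-z^*}\le \sqrt{C}\,d_0$ from Corollary \ref{cor:dom}, where $C:=1+\tfrac{2\alpha(1+\alpha)}{(1-\alpha)^2q(\alpha)}$, gives the residual bound. The gap is in \eqref{eq:seg06}. By anchoring the ergodic identity at $z^*$, you are forced to bound $\sum_{j=1}^k\lambda_j\varepsilon_j$ \emph{separately}, and the only tool available for that is the relative-error criterion \eqref{eq:err.hpe}, which yields $\sum_{j=1}^k\lambda_j\varepsilon_j\le \tfrac{\sigma^2}{2}\sum_{j=1}^k\norm{\tilde z_j-w_{j-1}}^2\le \tfrac{\sigma^2}{2(1-\sigma^2)\tau}\,Cd_0^2$. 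This injects a term of order $\sigma^2/(1-\sigma^2)$ into your final constant, whereas every $\sigma$-dependent term in \eqref{eq:seg06} is of order $\sigma/\sqrt{1-\sigma^2}$ (under Assumption {\bf (A)}, $\tau$ and $\eta$ stay bounded away from $0$ as $\sigma\to 1^-$ for fixed $\beta$). Hence for $\sigma$ close to $1$ this single term already exceeds the entire right-hand side of \eqref{eq:seg06}: your argument does prove an $\mathcal{O}(1/k)$ ergodic rate, but with a strictly larger constant, so it does not prove the inequality as stated.

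The idea you are missing is the paper's choice of reference point: it applies Svaiter's inequality, Lemma \ref{pr:ben}(a), with $z=\tilde z_k^a$ rather than $z=z^*$, so that for each $j$ the whole bracket $2\tau\lambda_j\left(\varepsilon_j+\inner{\tilde z_j-\tilde z_k^a}{v_j}\right)$ --- which is exactly the $j$-th summand in the definition of $\Lambda_k\varepsilon_k^a$ in \eqref{eq:d.eg} --- is dominated by the potential difference $\norm{w_{j-1}-\tilde z_k^a}^2-\norm{z_j-\tilde z_k^a}^2$. Summing, telescoping via Lemma \ref{lm:wz} (this is where $\alpha_k\equiv\alpha$ enters) and then estimating $\norm{z_\ell-\tilde z_k^a}$ for $\ell=0,k-1$ through the auxiliary points $\hat z_j:=w_{j-1}-\lambda_j v_j$ (which is where the factors $\sigma/\sqrt{(1-\sigma^2)\tau}$ and $\sqrt{4+(1-\tau)^2/(\eta\tau^2)}$ actually come from) yields \eqref{eq:seg06} with its stated constant; the $\varepsilon_j$ are never paid for through the error criterion, only linearly in $\sigma$ through $\norm{\lambda_jv_j+\tilde z_j-w_{j-1}}\le\sigma\norm{\tilde z_j-w_{j-1}}$. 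If you replace your $z^*$-anchored identity by this $\tilde z_k^a$-anchored inequality, the rest of your machinery (Corollary \ref{cor:dom}, the bound \eqref{eq:sum}, and the inequality $\norm{a}^2-\norm{b}^2\le 2\norm{a}\norm{a-b}$) completes the proof.
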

\begin{proof}
Let $z^*\in T^{-1}(0)$ be such that $d_0=\norm{z_0-z^*}$.
Using Algorithm \ref{inertial.hpe}'s definition  and Lemma \ref{pr:ben}(a)
%
%
with $z=\tilde z_k^a$ we find, for all $j\geq 1$,
\begin{align}
 \norm{w_{j-1}-\tilde z^a_k}^2-\norm{z_j-\tilde z^a_k}^2
  \label{eq:dom02}
 &\geq 2\tau \lambda_j\left(\varepsilon_j+\inner{\tilde z_j-\tilde z^a_k}{v_j}\right).
\end{align}
On the other hand,  Lemma \ref{lm:wz} yields
\begin{align*}
  \norm{w_{j-1}-\tilde z_k^a}^2=(1+\alpha_{j-1})\norm{z_{j-1}-\tilde z_k^a}^2-
\alpha_{j-1}\norm{z_{j-2}-\tilde z_k^a}^2+\alpha_{j-1}(1+\alpha_{j-1})\norm{z_{j-1}-z_{j-2}}^2,
\end{align*}
which, in turn, combined with \eqref{eq:dom02} gives, for all $j\geq 1$,
\begin{align*}
   \norm{z_{j}-\tilde z_k^a}^2-\norm{z_{j-1}-\tilde z_k^a}^2
+2\tau \lambda_j\left(\varepsilon_j+\inner{\tilde z_j-\tilde z^a_k}{v_j}\right)
\leq \alpha_{j-1} \left(\norm{z_{j-1}-\tilde z_k^a}^2-\norm{z_{j-2}-\tilde z_k^a}^2\right)+\delta_j,
\end{align*}
where the sequence $\{\delta_j\}$ is as in \eqref{eq:def.phi}.
%
%
Summing the latter inequality over all $j=1,\dots, k$
and using \eqref{eq:d.eg} as well as the assumption $\alpha_k\equiv \alpha$, we obtain
\begin{align*}
 \norm{z_{k}-\tilde z_k^a}^2-\norm{z_{0}-\tilde z_k^a}^2+2\tau \Lambda_k\varepsilon_k^a
\leq \alpha\left(\norm{z_{k-1}-\tilde z_k^a}^2-\norm{z_{-1}-\tilde z_k^a}^2\right)
+\sum_{j=1}^k\,\delta_j,
\end{align*}
which combined with the definition of $\{\delta_j\}$ and \eqref{eq:sum} yields 
(recall that $z_0=z_{-1}$)
\begin{align}
   \label{eq:dom07}
  \nonumber
 2\tau \Lambda_k\varepsilon_k^a-\dfrac{2\alpha(1+\alpha)d_0^2}{(1-\alpha)q(\alpha)}
  &\leq (1-\alpha)\left(\norm{z_{0}-\tilde z_k^a}^2-\norm{z_{k}-\tilde z_k^a}^2\right)\\
 \nonumber
  &+
 \alpha \left(\norm{z_{k-1}-\tilde z_k^a}^2-\norm{z_{k}-\tilde z_k^a}^2\right)\\
&\leq  2\max\left\{\norm{z_{0}-\tilde z_k^a}\norm{z_{0}-z_k},
 \norm{z_{k-1}-\tilde z_k^a}\norm{z_{k-1}-z_k}\right\},
\end{align}
where we have also used the inequality $\norm{a}^2-\norm{b}^2\leq 2\norm{a}\norm{a-b}$ for all $a,b\in \HH$.
Now, define
\begin{align}
  \label{eq:dom04}
 \hspace{-3cm}(\forall j\geq 1)\quad \hat z_j:=w_{j-1}-\lambda_j v_j\quad \mbox{and}\quad
\hat z_{k}^a:=\dfrac{1}{\Lambda_k}\,\sum_{j=1}^k\,\lambda_j \hat z_j.
\end{align}
From Corollary \ref{cor:dom}, the first definition
in \eqref{eq:dom04}, \eqref{eq:d.eg}, \eqref{eq:err.hpe2} and the convexity of $\norm{\cdot}^2$ we find
\begin{align}
 \label{eq:seg01}
  \norm{z_\ell-z_j} \leq \norm{z_\ell-z^*}+\norm{z_j-z^*}
 \leq 2 d_0 \sqrt{1+\dfrac{2\alpha(1+\alpha)}{(1-\alpha)^2q(\alpha)}} \qquad \forall \ell, j\geq 0,
\end{align}
\begin{align}
 \label{eq:seg02}
  \hspace{-1cm}(1-\tau)^{-2}\sum_{j=1}^k\,\norm{z_j-\hat z_j}^2=\sum_{j=1}^k\,\norm{\lambda_j v_j}^2
 \leq \dfrac{d_0^2}{\eta\tau^2} \left(1+\dfrac{2\alpha(1+\alpha)}{(1-\alpha)^2q(\alpha)}
\right)
 \end{align}
and
\begin{align}
 \label{eq:seg03}
\nonumber
  \hspace{-0.7cm}
\norm{\tilde z_k^a-\hat z_k^a}^2\leq \dfrac{1}{\Lambda_k}\sum_{j=1}^k\,\lambda_j\norm{\tilde z_j-\hat z_j}^2
  &\leq \sum_{j=1}^k\,\norm{\lambda_j v_j+\tilde z_j-w_{j-1}}^2\\
  \nonumber
 &\leq  \sigma^2\sum_{j=1}^k\,\norm{\tilde z_j-w_{j-1}}^2\\
&\leq  \dfrac{\sigma^2 d_0^2}{(1-\sigma^2)\tau }
\left(1+\dfrac{2\alpha(1+\alpha)}{(1-\alpha)^2q(\alpha)}\right).
\end{align}
From \eqref{eq:seg01}, \eqref{eq:seg02}, the convexity of $\|\cdot\|^2$ 
and the inequality $\norm{a-b}^2\leq 2\left(\norm{a}^2+\norm{b}^2\right)$ (for all $a,b\in \HH$), we find
\begin{align*}
 \hspace{-1cm}\norm{z_\ell-\hat z_k^a}^2&\leq \dfrac{1}{\Lambda_k}\sum_{j=1}^k\,\lambda_j\norm{z_\ell-\hat z_j}^2\\
  &\leq 2\left(\dfrac{1}{\Lambda_k}\sum_{j=1}^k\,\lambda_j\norm{z_\ell-z_j}^2+
    \sum_{j=1}^k\,\norm{z_j-\hat z_j}^2\right)\\
 &\leq
 2d_0^2\left(1+\dfrac{2\alpha(1+\alpha)}{(1-\alpha)^2q(\alpha)}\right)
 \left(4+\dfrac{(1-\tau)^2}{\eta\tau^2}\right)
\qquad \forall \ell\geq 0.
 \end{align*}
Using the above inequality and \eqref{eq:seg03} we obtain, for all $\ell\geq 0$,
\begin{align}
 \label{eq:seg04}
\nonumber
 \norm{z_\ell-\tilde z_k^a}&\leq \norm{z_\ell-\hat z_k^a}+\norm{\tilde z_k^a-\hat z_k^a}\\
   &\leq
   \sqrt{2}d_0\sqrt{1+\dfrac{2\alpha(1+\alpha)}{(1-\alpha)^2q(\alpha)}}
  \left(\dfrac{\sigma}{\sqrt{(1-\sigma^2)\tau}}+\sqrt{4+\dfrac{(1-\tau)^2}{\eta\tau^2}}\right).
  \end{align}
Hence, \eqref{eq:dom07}, \eqref{eq:seg01} with $\ell=0,k-1$ and $j=k$, and \eqref{eq:seg04}  with $\ell=0,k-1$  yield
\begin{align*}
 2\tau \Lambda_k \varepsilon_k^a
 &\leq 4\sqrt{2} d_0^2 \left(1+\dfrac{2\alpha(1+\alpha)}{(1-\alpha)^2q(\alpha)}\right)
  \left(\dfrac{\sigma}{\sqrt{(1-\sigma^2)\tau}}+\sqrt{4+\dfrac{(1-\tau)^2}{\eta\tau^2}}\right)\\
 &+\dfrac{2\alpha(1+\alpha)d_0^2}{(1-\alpha)q(\alpha)}\\
&\leq 4\sqrt{2}d_0^2
  \left(1+\dfrac{2\alpha(1+\alpha)}{(1-\alpha)^2q(\alpha)}\right)
  \left(1+\dfrac{\sigma}{\sqrt{(1-\sigma^2)\tau}}+\sqrt{4+\dfrac{(1-\tau)^2}{\eta\tau^2}}\right),
\end{align*}
which, combined with the assumption $\lambda_k\geq \underline{\lambda}>0$ for all $k\geq 1$, clearly finishes the proof of \eqref{eq:seg06}.

Now note that using \eqref{eq:err.hpe2}, \eqref{eq:ext.hpe}
and the assumption $\alpha_k\equiv \alpha$ we find 
 \begin{align*}
  \tau \lambda_j v_j=z_{j-1}-z_j+\alpha(z_{j-1}-z_{j-2})\qquad \forall j\geq 1.
\end{align*}
Summing the above identity over $j=1,\dots, k$ and using \eqref{eq:d.eg} and \eqref{eq:seg01} 
with $\ell=0$ and $j=k-1,k$
we find (recall that $z_0=z_{-1}$)
\begin{align*}
 \tau \Lambda_k \norm{v_k^a}&\leq \norm{z_0-z_k}+\alpha\norm{z_0-z_{k-1}}\\
  &\leq 2(1+\alpha)d_0\sqrt{1+\dfrac{2\alpha(1+\alpha)}{(1-\alpha)^2q(\alpha)}},
\end{align*}
which, combined with the assumption $\lambda_k\geq \underline{\lambda}>0$ for all $k\geq 1$, yields \eqref{eq:seg07}. 
To finish the proof of the theorem, note that \eqref{eq:seg08} is a direct consequence
of the inclusion in \eqref{eq:err.hpe} and Theorem \ref{th:tf}(a).
\end{proof}

\mgap
\noindent
{\bf Remark.} We mention that, up to the authors knowledge, this is the first time in the literature that  
$\mathcal{O}(1/k)$ global convergence rates are established for inertial PP-type algorithms.

\subsection{On the under-relaxed inertial proximal point method}
 \label{sec:inertial.pp}

In this subsection, we analyze the convergence and iteration-complexity of the
under-relaxed inertial proximal point (PP) method (see, e.g., \cite{alv-wea.siam03,att.cab-con.pre18}) with constant
under-relaxation (Algorithm \ref{inertial.ppm}) for solving \eqref{eq:mip2}.
The analysis is performed by viewing Algorithm \ref{inertial.ppm} within
the framework of Algorithm \ref{inertial.hpe}, for which
asymptotic convergence and iteration-complexity were
obtained in Theorems \ref{th:wc}, \ref{th:pic} and \ref{th:erg}.


\mgap
\mgap

\noindent
\fbox{
\addtolength{\linewidth}{-2\fboxsep}%
\addtolength{\linewidth}{-2\fboxrule}%
\begin{minipage}{\linewidth}
\begin{algorithm}
\label{inertial.ppm}
{\bf Under-relaxed inertial proximal point method for solving \bf{(\ref{eq:mip2})}}
\end{algorithm}
\begin{itemize}
\item[] {\bf Input:} $z_0=z_{-1}\in \HH$ and $0\leq \alpha<1$ and $0<\tau\leq 1$.
\item [{\bf 1:}] {\bf for} $k=1,2,\dots$,  {\bf do}
\item [{\bf 2:}] Choose $\alpha_{k-1}\in [0,\alpha]$ and define
  \begin{align}
      \label{eq:ext.ppm}
     w_{k-1}:=z_{k-1}+\alpha_{k-1}(z_{k-1}-z_{k-2}).
 \end{align}
\item [{\bf 3:}] Compute
\begin{align}
\label{eq:err.ppm}
 \tilde z_k=(\lambda_k T+I)^{-1}w_{k-1}.
\end{align}
\item[{\bf 4:}] Define
 \begin{align}
  \label{eq:err.ppm2}
   z_k:=\tau \tilde z_k+(1-\tau)w_{k-1}.
 \end{align}
   \end{itemize}
\noindent
\end{minipage}
} 

\mgap

%


\begin{proposition}
  \label{pr:fer}
 \emph{Algorithm \ref{inertial.ppm}} is a special instance of \emph{Algorithm \ref{inertial.hpe}}
     with $\sigma=0$ in the \emph{Input}, in which case $\varepsilon_k=0$  and
 $v_k=(w_{k-1}-\tilde z_k)/\lambda_k\in T(\tilde z_k)$ for all $k\geq 1$.
\end{proposition}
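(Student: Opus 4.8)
The plan is to verify directly that the update rules of Algorithm \ref{inertial.ppm} are precisely what Algorithm \ref{inertial.hpe} produces once one sets $\sigma=0$ in its \emph{Input}, so that the two schemes generate identical sequences. The whole argument is an identification organized around the relative-error criterion \eqref{eq:err.hpe}.

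First I would exploit the structure of \eqref{eq:err.hpe} under $\sigma=0$. In that case the right-hand side of the inequality vanishes, while $\varepsilon_k\geq 0$ and $\lambda_k>0$; hence the left-hand side $\norm{\lambda_k v_k+\tilde z_k-w_{k-1}}^2+2\lambda_k\varepsilon_k$ is a sum of two nonnegative terms bounded above by $0$. Both must therefore be zero, which forces $\varepsilon_k=0$ and $\lambda_k v_k+\tilde z_k-w_{k-1}=0$, i.e. $v_k=(w_{k-1}-\tilde z_k)/\lambda_k$. Combined with the inclusion $v_k\in T^{\varepsilon_k}(\tilde z_k)=T^0(\tilde z_k)$ and Proposition \ref{pr:teps}(d) (which gives $T^0=T$ for maximal monotone $T$), this yields $v_k\in T(\tilde z_k)$ and pins down the two identities claimed for $(\varepsilon_k,v_k)$.

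Next I would check that this forced choice is exactly the proximal step of Algorithm \ref{inertial.ppm}. The membership $v_k=(w_{k-1}-\tilde z_k)/\lambda_k\in T(\tilde z_k)$ is, by the resolvent characterization recalled in Subsection \ref{subsec:moen} applied to the operator $\lambda_k T$, equivalent to $w_{k-1}-\tilde z_k\in \lambda_k T(\tilde z_k)$, that is, $\tilde z_k=(\lambda_k T+I)^{-1}w_{k-1}$, which is \eqref{eq:err.ppm}. Finally, substituting $v_k=(w_{k-1}-\tilde z_k)/\lambda_k$ into the relaxed step \eqref{eq:err.hpe2} gives
\[
z_k=w_{k-1}-\tau\lambda_k v_k=w_{k-1}-\tau(w_{k-1}-\tilde z_k)=\tau\tilde z_k+(1-\tau)w_{k-1},
\]
which is precisely \eqref{eq:err.ppm2}. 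Since the extrapolation step \eqref{eq:ext.ppm} coincides verbatim with \eqref{eq:ext.hpe}, the two algorithms execute identical iterations.

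There is essentially no obstacle here, since the statement is a verification; the only point requiring care is the direction of the equivalence. The error inequality determines the HPE triple uniquely when $\sigma=0$, so I would present the argument as showing both that any run of Algorithm \ref{inertial.hpe} with $\sigma=0$ must take the proximal form \eqref{eq:err.ppm}--\eqref{eq:err.ppm2}, and conversely that the resolvent step of Algorithm \ref{inertial.ppm} produces an admissible triple $(\tilde z_k,v_k,\varepsilon_k)=\bigl(\tilde z_k,(w_{k-1}-\tilde z_k)/\lambda_k,0\bigr)$ satisfying \eqref{eq:err.hpe}--\eqref{eq:err.hpe2} with $\sigma=0$.
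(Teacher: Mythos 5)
Your proposal is correct and takes essentially the same route as the paper: the identification rests on the resolvent characterization $\tilde z=(\lambda T+I)^{-1}w$ if and only if $(w-\tilde z)/\lambda\in T(\tilde z)$ (together with Proposition \ref{pr:teps}(d) and the two algorithms' definitions), which is exactly the paper's much terser argument. Your additional observation that $\sigma=0$ forces $\varepsilon_k=0$ and $\lambda_k v_k+\tilde z_k-w_{k-1}=0$, so that the correspondence goes in both directions, is a harmless elaboration of the same identification.
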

\begin{proof}
 The proof follows from the well-known fact that $\tilde z=(\lambda T+I)^{-1}w$ if and only if
 $v:=(w-\tilde z)/\lambda\in T(\tilde z)$ and Algorithms \ref{inertial.ppm} and \ref{inertial.hpe}'s definitions.
\end{proof}


\begin{theorem}[convergence and iteration-complexity of Algorithm \ref{inertial.ppm}]
 \label{th:tseng.main}
 Under the \emph{Assumption {\bf (A)}} with $\sigma=0$ on \emph{Algorithm \ref{inertial.ppm}},
let  $\{z_k\}$, $\{v_k\}$,  $\{\tilde z_k\}$ and $\{\lambda_k\}$  be generated
by \emph{Algorithm \ref{inertial.ppm}}  and let the ergodic sequences $\{\tilde z_k^a\}$, $\{v_k^a\}$ and $\{\varepsilon_k^a\}$
be as in \eqref{eq:d.eg}. Let also $q(\cdot)$ be as in \eqref{eq:def.q}
and let $d_0$ denote the distance of $z_0$ to $T^{-1}(0)$. Assume
that $\lambda_k\geq \underline{\lambda}>0$ for all $k\geq 1$. Then, the following statements hold:
\begin{itemize}
 \item[\emph{(a)}] The sequence $\{z_k\}$ converges weakly to a solution of the monotone inclusion
problem \eqref{eq:mip2}.
 \item[\emph{(b)}] For all $k\geq 1$, there exists $i\in \{1,\dots, k\}$ such that
 \begin{align}
   \label{eq:tsg1002}
  v_i\in T(\tilde z_i),\qquad \norm{v_i}\leq \dfrac{d_0}{\underline{\lambda}\tau\,\sqrt{k}}
\sqrt{\eta^{-1}\left(1+\dfrac{2\alpha(1+\alpha)}{(1-\alpha)^2q(\alpha)}\right)}.
\end{align}
\item[\emph{(c)}] If, additionally, $\alpha_k\equiv \alpha$, then, for all $k\geq 1$,
  \begin{align}
   \label{eq:seg0882}
   & v_k^a\in T^{\varepsilon_k^a}(\tilde z_k^a),\\[2mm]
    \label{eq:seg077}
   &\norm{v_k^a}\leq \dfrac{2(1+\alpha)d_0}{\underline{\lambda}\tau\,k}
         \sqrt{1+\dfrac{2\alpha(1+\alpha)}{(1-\alpha)^2q(\alpha)}},
\\[2mm]
    \label{eq:seg0662}
   &\varepsilon_k^a\leq \dfrac{2\sqrt{2}d_0^2}{\underline{\lambda}\tau\,k}
  \left(1+\dfrac{2\alpha(1+\alpha)}{(1-\alpha)^2q(\alpha)}\right)
  \left(1+\sqrt{4+\dfrac{(1-\tau)^2}{\tau(2-\tau)}}\right).
 \end{align}
\end{itemize}
\end{theorem}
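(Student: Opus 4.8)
The plan is to obtain every assertion as a direct specialization of the general results already established for Algorithm \ref{inertial.hpe}, using the identification furnished by Proposition \ref{pr:fer}. First I would invoke Proposition \ref{pr:fer} to view Algorithm \ref{inertial.ppm} as the instance of Algorithm \ref{inertial.hpe} obtained by taking $\sigma=0$ in the input; this yields $\varepsilon_k=0$ and $v_k=(w_{k-1}-\tilde z_k)/\lambda_k\in T(\tilde z_k)$ for all $k\geq 1$. Since Assumption ${\bf (A)}$ with $\sigma=0$ is precisely Assumption ${\bf (A)}$ for the ambient Algorithm \ref{inertial.hpe}, all the hypotheses of Theorems \ref{th:wc}, \ref{th:pic} and \ref{th:erg} are in force, and it only remains to read off what those theorems say when $\sigma=0$.

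For part (a), the weak convergence of $\{z_k\}$ to a solution of \eqref{eq:mip2} is exactly the conclusion of Theorem \ref{th:wc} under Assumption ${\bf (A)}$ together with $\lambda_k\geq\underline{\lambda}>0$. For part (b), I would apply Theorem \ref{th:pic}: for each $k\geq 1$ it produces an index $i\in\{1,\dots,k\}$ satisfying \eqref{eq:th:pic01}--\eqref{eq:th:pic03}. Because $\varepsilon_i=0$, the inclusion \eqref{eq:th:pic01} becomes $v_i\in T(\tilde z_i)$ and the bound \eqref{eq:th:pic03} is trivially satisfied, while the bound \eqref{eq:th:pic02} on $\norm{v_i}$ carries over verbatim; this gives \eqref{eq:tsg1002}.

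Part (c) is the only place requiring a small computation. Under the additional hypothesis $\alpha_k\equiv\alpha$, Theorem \ref{th:erg} gives the inclusion \eqref{eq:seg08} and the bounds \eqref{eq:seg07}, \eqref{eq:seg06}. The inclusion \eqref{eq:seg0882} and the bound \eqref{eq:seg077} coincide with \eqref{eq:seg08} and \eqref{eq:seg07}, since $\sigma$ does not appear in the latter. To recover \eqref{eq:seg0662} from \eqref{eq:seg06}, I would substitute $\sigma=0$, which annihilates the term $\sigma/\sqrt{(1-\sigma^2)\tau}$, and use the identity $\eta=\eta(0,\tau)=2/\tau-1=(2-\tau)/\tau$ coming from \eqref{eq:def.etak}. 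This gives $\eta\tau^2=(2-\tau)\tau$, so that $(1-\tau)^2/(\eta\tau^2)=(1-\tau)^2/(\tau(2-\tau))$, turning \eqref{eq:seg06} into \eqref{eq:seg0662}. The main obstacle here is purely notational bookkeeping: since the substantive estimates were already proved for the general Algorithm \ref{inertial.hpe}, no new analysis is needed beyond carefully tracking how the parameter $\eta$ specializes at $\sigma=0$.
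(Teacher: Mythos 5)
Your proposal is correct and follows exactly the paper's own route: invoke Proposition \ref{pr:fer} to identify Algorithm \ref{inertial.ppm} as the $\sigma=0$ instance of Algorithm \ref{inertial.hpe}, then specialize Theorems \ref{th:wc}, \ref{th:pic} and \ref{th:erg}. Your explicit verification that $\eta(0,\tau)=(2-\tau)/\tau$ turns $(1-\tau)^2/(\eta\tau^2)$ into $(1-\tau)^2/(\tau(2-\tau))$ in \eqref{eq:seg0662} is precisely the bookkeeping the paper leaves implicit in its one-line proof.
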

\begin{proof}
 The results in (a), (b) and (c) follow directly from Proposition \ref{pr:fer} and Theorems \ref{th:wc}, \ref{th:pic}
and \ref{th:erg}.
\end{proof}

\comment{
\begin{proposition}
 \label{pr:fer}
  The following statements hold:
   \begin{itemize}
    \item[\emph{(a)}] \emph{Algorithm \ref{inertial.ppm}} is a special instance of \emph{Algorithm \ref{inertial.hpe}}
     with $\sigma=0$ in the \emph{Input}, in which case $\varepsilon_k\equiv 0$, $z_k\equiv \tilde z_k$
     and $v_k=(w_{k-1}-z_k)/\lambda_k\in T(z_k)$.
     \item[\emph{(b)}] The assertions of Theorems \ref{th:wc}, \ref{th:pic} \emph{(}\emph{Eqs.} \eqref{eq:th:pic01} and \eqref{eq:th:pic02}\emph{)}
  and \ref{th:erg} are valid, with $\sigma=0$, for
   \emph{Algorithm \ref{inertial.ppm}}.
\end{itemize}
 \end{proposition}
\begin{proof}
 (a) This follows from the well-known fact that $\tilde z=(\lambda T+I)^{-1}w$ if and only if
 $v:=(w-\tilde z)/\lambda\in T(\tilde z)$ and Algorithms \ref{inertial.ppm} and \ref{inertial.hpe}'s definitions.

(b) This follows trivially from Item (a). Note that, since $\varepsilon_k\equiv 0$, it follows that in this case \eqref{eq:th:pic03}
is irrelevant.
\end{proof}
}

\comment{
\begin{proposition}
 \label{pr:alg1.alg2}
 Let $\{\tilde z_k\}$, $\{w_k\}$ and $\{\lambda_k\}$ be generated by \emph{Algorithm \ref{inertial.ppm}} and define, for all $k\geq 1$,
 \begin{align}
  \label{eq:def.ve}
  \varepsilon_k:=0\;\;\mbox{and}\;\; v_k:=\dfrac{w_{k-1}-\tilde z_k}{\lambda_k}\qquad \forall k\geq 1.
 \end{align}
The triple $(\tilde z_k,v_k,\varepsilon_k)$ and $\lambda_k>0$ satisfy condition \eqref{eq:err.hpe} with
$\sigma=0$ and condition \eqref{eq:err.hpe2}. As a consequence, it follows that \emph{Algorithm \ref{inertial.ppm}}
is a special instance of \emph{Algorithm \ref{inertial.hpe}} with input $(\alpha, \sigma:=0,\tau)$.
\end{proposition}
%
%
%

\begin{theorem}[weak convergence of Algorithm \ref{inertial.ppm}]
Let $\{z_k\}$, $\{\alpha_k\}$ and $\{\lambda_k\}$ be generated by \emph{Algorithm \ref{inertial.ppm}}
with input $(\alpha,\beta,\tau)\in  [0,1[\times ]0,1[\times ]0,1]$ such that
\begin{align}
 \label{eq:betatau2}
 \alpha<\beta,\qquad
\tau:=\tau(\beta):=\dfrac{2(\beta'-1)^2}{2(\beta'-1)^2+3\beta'-1},
\end{align}
where
\begin{align}
  \label{eq:betalinha2}
 \beta':=\max\left\{\beta,1/3\right\}\in
\left[1/3,1\right[.
\end{align}
Assume that $\{\alpha_k\}$ is nondecreasing and $\lambda_k\geq \underline{\lambda}>0$ for all $k\geq 1$. Then,
\begin{align}
 \sum_{k=1}^\infty\norm{z_k-z_{k-1}}^2<+\infty.
\end{align}
As a consequence, under the above assumptions the sequence $\{z_k\}$ converges weakly to a solution of the monotone inclusion
problem \eqref{eq:mip2}.
\end{theorem}
}

\section{Inertial under-relaxed forward-backward and Tseng's modified forward-backward  methods}
 \label{sec:tseng}

Consider the structured monotone inclusion problem \eqref{eq:mips}, i.e., the problem of finding $z\in \HH$ such that
\begin{align}
 \label{eq:mipt}
 0\in F(z)+B(z)=:T(z)
\end{align}
where $F:D(F)\subset \HH\to \HH$ is point-to-point monotone and $B:\HH\tos \HH$ is a (set-valued) maximal monotone
operator for which $T^{-1}(0)\neq \emptyset$ (precise assumption on $F$ and $B$ will be stated later).

In this section, we study the convergence and iteration-complexity of inertial (under-relaxed) versions
of the forward-backward and Tseng's modified forward-backward methods \eqref{eq:fb.intr} and \eqref{eq:tseng.intr}, respectively, for solving \eqref{eq:mipt}, by 
viewing them within the framework of Algorithm \ref{inertial.hpe}, for which asymptotic convergence
and iteration-complexity were studied in Section \ref{sec:alg}.

\subsection{An inertial under-relaxed Tseng's modified forward-backward method}
  \label{subsec:tsg}

In this subsection, we consider the monotone inclusion problem \eqref{eq:mipt} where the following assumptions are assumed to
 hold:
\begin{itemize}
 \item[(C1)]  $F:D(F)\subset \HH\to \HH$ is monotone and $L$-Lipschitz continuous on a (nonempty) closed convex set $\Omega$
such that $D(B)\subset \Omega\subset D(F)$, i.e., $F$ is monotone on $\Omega$ and there exists $L\geq 0$ such that
 \begin{align*}
  \norm{F(z)-F(z')}\leq L\norm{z-z'}\qquad \forall z,z'\in \Omega.
 \end{align*}
 %
 %
 %
\item[(C2)] $B$ is a (set-valued) maximal monotone operators on $\HH$.
\item[(C3)] The solution set of \eqref{eq:mipt} is nonempty.
\end{itemize}

We mention that it was proved in  \cite[Proposition A.1]{MonSva10-1} that under assumptions $(C1)$--$(C3)$
the operator $T(\cdot)$ defined in \eqref{eq:mipt} is maximal monotone, which guarantee
that \eqref{eq:mipt} is a special instance of \eqref{eq:mip2}. In particular,  it follows that Algorithm \ref{inertial.hpe} can be used
to solving the structured monotone inclusion \eqref{eq:mipt}. 

As we mentioned earlier, in this subsection, we shall study the convergence and
iteration-complexity of the following inertial under-relaxed version of the
Tseng's modified forward-backward method for solving \eqref{eq:mipt}.

\mgap
\mgap

\noindent
\fbox{
\addtolength{\linewidth}{-2\fboxsep}%
\addtolength{\linewidth}{-2\fboxrule}%
\begin{minipage}{\linewidth}
\begin{algorithm}
\label{inertial.tseng}
{\bf An inertial under-relaxed Tseng's modified forward-backward method for solving \bf{(\ref{eq:mipt})}}
\end{algorithm}
\begin{itemize}
\item[] {\bf Input:} $z_0=z_{-1}\in \HH$, $0\leq \alpha<1$, $0<\sigma<1$  and $0<\tau\leq 1$.
\item [{\bf 1:}] {\bf for} $k=1,2,\dots$,  {\bf do}
\item [{\bf 2:}] Choose $\alpha_{k-1}\in [0,\alpha]$ and define
  \begin{align*}
     w_{k-1}:=z_{k-1}+\alpha_{k-1}(z_{k-1}-z_{k-2}).
 \end{align*}
\item [{\bf 3:}] Choose $\lambda_k\in ]0,\sigma/L]$, let $w'_{k-1}=P_{\Omega}(w_{k-1})$ and compute
\begin{align*}
 &\tilde z_k=(\lambda_k B+I)^{-1}(w_{k-1}-\lambda_k F(w'_{k-1})),\\[2mm]
 &\hat z_k=\tilde z_k-\lambda_k\left(F(\tilde z_k)-F(w'_{k-1})\right).
\end{align*}
\item[{\bf 4:}] Define
 \begin{align*}
   z_k:= (1-\tau)w_{k-1}+\tau \hat z_k.
 \end{align*}
   \end{itemize}
\noindent
\end{minipage}
} 

\mgap
\mgap

\noindent
{\bf Remarks.}
\begin{itemize}
 \item[(i)] Algorithm \ref{inertial.tseng} reduces to the Tseng's modified forward-backward method \cite{tse-mod.sjco00} for solving
\eqref{eq:mipt} if $\alpha=0$ and $\tau=1$, in which case $w_{k-1}=z_{k-1}$ and $z_k=\hat z_k$. 
 \item[(ii)] An inertial Tseng's modified forward-backward-type method (based on a different mechanism of iteration) was proposed and studied in \cite{bot.cse-hyb.nfao15}.
                The proposed Tseng's modified forward-backward type method in the latter reference tends to suffer 
 from similar limitations as the inertial HPE-type method proposed in \cite{bot.cse-hyb.nfao15}, as we discussed in 
the third remark following Assumption ${\bf (A)}$. Moreover, in contrast to this paper which performs the iteration-complexity analysis
of Algorithm \ref{inertial.tseng} (see Theorem \ref{th:tseng.main}), \cite{bot.cse-hyb.nfao15} has focused on asymptotic convergence.
\end{itemize}

Since the proof of the next proposition follows the same outline of \cite[Proposition 6.1]{mon.sva-hpe.siam10}, we omit it here.

\begin{proposition}
 \label{pr:tsg.e.ihpe}
Let $\{w_k\}$, $\{w'_k\}$, $\{z_k\}$, $\{\alpha_k\}$, $\{\tilde z_k\}$ and $\{\lambda_k\}$ be generated by
\emph{Algorithm \ref{inertial.tseng}} and define
\begin{align}
  \label{eq:tsg101}
 \varepsilon_k:=0\;\;\mbox{and}\;\;
  v_k:=F(\tilde z_k)-F(w'_{k-1})+\dfrac{1}{\lambda_k}(w_{k-1}-\tilde z_k) \qquad \forall k\geq 1.
\end{align}
Then, the sequences $\{w_k\}$, $\{z_k\}$, $\{\alpha_k\}$, $\{\tilde z_k\}$, $\{v_k\}$, $\{\varepsilon_k\}$ and
$\{\lambda_k\}$ satisfy the conditions \eqref{eq:ext.hpe}--\eqref{eq:err.hpe2} in \emph{Algorithm \ref{inertial.hpe}}.
As a consequence, it follows that \emph{Algorithm \ref{inertial.tseng}} is a special instance of \emph{Algorithm \ref{inertial.hpe}}
for solving \eqref{eq:mipt}.
 \end{proposition}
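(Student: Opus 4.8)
The plan is to verify directly that the triple $(\tilde z_k, v_k, \varepsilon_k)$ together with $\lambda_k$ satisfies each of the three defining relations \eqref{eq:ext.hpe}--\eqref{eq:err.hpe2} of Algorithm \ref{inertial.hpe}. The extrapolation identity \eqref{eq:ext.hpe} is immediate, since Step 2 of Algorithm \ref{inertial.tseng} forms $w_{k-1}$ by exactly the same rule. It thus remains to check the inclusion-plus-error condition \eqref{eq:err.hpe} and the under-relaxed update \eqref{eq:err.hpe2}.

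First I would establish the inclusion $v_k \in T^{\varepsilon_k}(\tilde z_k)$ with $\varepsilon_k = 0$; since $T = F + B$ is maximal monotone, Proposition \ref{pr:teps}(d) reduces this to showing $v_k \in F(\tilde z_k) + B(\tilde z_k)$. Unfolding the resolvent definition $\tilde z_k = (\lambda_k B + I)^{-1}(w_{k-1} - \lambda_k F(w'_{k-1}))$ yields $\lambda_k^{-1}(w_{k-1} - \tilde z_k) - F(w'_{k-1}) \in B(\tilde z_k)$, and adding $F(\tilde z_k)$ to both sides recovers precisely the defining expression for $v_k$ in \eqref{eq:tsg101}, so that $v_k = F(\tilde z_k) + b_k$ with $b_k \in B(\tilde z_k)$, as required.

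For the relative-error estimate I would substitute the expression for $v_k$ and observe that the residual simplifies to $\lambda_k v_k + \tilde z_k - w_{k-1} = \lambda_k\bigl(F(\tilde z_k) - F(w'_{k-1})\bigr)$, so that (with $\varepsilon_k = 0$) the left-hand side of \eqref{eq:err.hpe} equals $\lambda_k^2\,\norm{F(\tilde z_k) - F(w'_{k-1})}^2$. Here lies the only genuinely delicate point: the extrapolated point $w_{k-1}$ need not lie in $\Omega$, where $F$ is Lipschitz, which is exactly why Step 3 evaluates $F$ at the projection $w'_{k-1} = P_\Omega(w_{k-1})$. Since $\tilde z_k \in D(B) \subset \Omega$ satisfies $\tilde z_k = P_\Omega(\tilde z_k)$, nonexpansiveness of $P_\Omega$ gives $\norm{\tilde z_k - w'_{k-1}} = \norm{P_\Omega(\tilde z_k) - P_\Omega(w_{k-1})} \leq \norm{\tilde z_k - w_{k-1}}$; combining this with the $L$-Lipschitz bound on $F$ and the stepsize constraint $\lambda_k \leq \sigma/L$ yields $\lambda_k^2\,\norm{F(\tilde z_k) - F(w'_{k-1})}^2 \leq \sigma^2\,\norm{\tilde z_k - w_{k-1}}^2$, which is \eqref{eq:err.hpe}. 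Finally, the same substitution shows $\hat z_k = w_{k-1} - \lambda_k v_k$, whence the relaxed update $z_k = (1-\tau)w_{k-1} + \tau \hat z_k$ becomes $z_k = w_{k-1} - \tau\lambda_k v_k$, matching \eqref{eq:err.hpe2}. The main obstacle is therefore not the algebra but keeping careful track of the projection: one must verify both that $\tilde z_k$ lands in $\Omega$ and that $P_\Omega$ controls $\norm{\tilde z_k - w'_{k-1}}$ by $\norm{\tilde z_k - w_{k-1}}$, which is what legitimizes replacing $w_{k-1}$ by $w'_{k-1}$ without spoiling the error bound.
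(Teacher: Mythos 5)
Your proof is correct and follows precisely the argument the paper itself omits (it cites the outline of Monteiro--Svaiter's Proposition 6.1): unfold the resolvent to obtain $v_k\in F(\tilde z_k)+B(\tilde z_k)=T(\tilde z_k)\subset T^0(\tilde z_k)$, note that the residual collapses to $\lambda_k\bigl(F(\tilde z_k)-F(w'_{k-1})\bigr)$, and combine the $L$-Lipschitz bound on $\Omega$, nonexpansiveness of $P_\Omega$ together with $\tilde z_k\in D(B)\subset\Omega$, and the stepsize restriction $\lambda_k\le\sigma/L$ to verify \eqref{eq:err.hpe}, with the identity $\hat z_k=w_{k-1}-\lambda_k v_k$ giving \eqref{eq:err.hpe2}. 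Your explicit handling of the projected point $w'_{k-1}$ is exactly the adaptation this inertial variant requires over the unprojected setting, so the proposal matches the intended proof.
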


Next we present the convergence and iteration-complexity of Algorithm \ref{inertial.tseng} under
the Assumption ${\bf (A)}$ on the Input $(\alpha,\sigma,\tau)\in [0,1[\times ]0,1[\times ]0,1]$ and on the sequence $\{\alpha_k\}$. We also mention that the observations regarding the parameter $\tau$ in the third remark following
Assumption ${\bf (A)}$ obviously apply to Algorithm \ref{inertial.tseng}.

\begin{theorem}[convergence and iteration-complexity of Algorithm \ref{inertial.tseng}]
 \label{th:tseng.main}
Under the \emph{Assumption ${\bf (A)}$} on $(\alpha,\sigma,\tau)\in [0,1[\times ]0,1[\times ]0,1]$ and $\{\alpha_k\}$,
let $\{z_k\}$, $\{\tilde z_k\}$ and $\{\lambda_k\}$  be generated
by \emph{Algorithm \ref{inertial.tseng}}, let $\{v_k\}$
and $\{\varepsilon_k\}$  be as in \eqref{eq:tsg101} and let the ergodic sequences $\{\tilde z_k^a\}$, $\{v_k^a\}$ and $\{\varepsilon_k^a\}$
be as in \eqref{eq:d.eg}. Let also $\eta>0$ and $q(\cdot)$ be as in \eqref{eq:def.etak} and \eqref{eq:def.q}, respectively,
let $d_0$ denote the distance of $z_0$ to $(F+B)^{-1}(0)$  and assume
that $\lambda_k\geq \underline{\lambda}>0$ for all $k\geq 1$. Then, the following statements hold:
\begin{itemize}
 \item[\emph{(a)}] The sequence $\{z_k\}$ converges weakly to a solution of the monotone inclusion
problem \eqref{eq:mipt}.
 \item[\emph{(b)}] For all $k\geq 1$, there exists $i\in \{1,\dots, k\}$ such that
 \begin{align}
   \label{eq:tsg100}
  v_i\in (F+B)(\tilde z_i),\qquad \norm{v_i}\leq \dfrac{d_0}{\underline{\lambda}\tau\,\sqrt{k}}
\sqrt{\eta^{-1}\left(1+\dfrac{2\alpha(1+\alpha)}{(1-\alpha)^2q(\alpha)}\right)}.
\end{align}
%
\item[\emph{(c)}] If, additionally, $\alpha_k\equiv \alpha$, then, for all $k\geq 1$,
    \begin{align}
  \begin{aligned}
 \label{eq:seg088}
   & v_k^a\in (F+B)^{\varepsilon_k^a}(\tilde z_k^a),\\[2mm]
   &\norm{v_k^a}\leq \dfrac{2(1+\alpha)d_0}{\underline{\lambda}\tau\,k}
         \sqrt{1+\dfrac{2\alpha(1+\alpha)}{(1-\alpha)^2q(\alpha)}},
\\[2mm]
   &\varepsilon_k^a\leq \dfrac{2\sqrt{2}d_0^2}{\underline{\lambda}\tau\,k}
  \left(1+\dfrac{2\alpha(1+\alpha)}{(1-\alpha)^2q(\alpha)}\right)
  \left(1+\dfrac{\sigma}{\sqrt{(1-\sigma^2)\tau}}+\sqrt{4+\dfrac{(1-\tau)^2}{\eta\tau^2}}\right).
 \end{aligned}
 \end{align}
  \end{itemize}
\end{theorem}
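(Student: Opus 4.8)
The plan is to rely entirely on the reduction already provided by Proposition~\ref{pr:tsg.e.ihpe}, which asserts that the iterates of Algorithm~\ref{inertial.tseng}, together with the auxiliary triple $(\tilde z_k,v_k,\varepsilon_k)$ defined in \eqref{eq:tsg101}, satisfy the defining relations \eqref{eq:ext.hpe}--\eqref{eq:err.hpe2} of Algorithm~\ref{inertial.hpe} applied to the operator $T:=F+B$. Hence Algorithm~\ref{inertial.tseng} is genuinely a special instance of the inertial under-relaxed HPE method, and all three conclusions should follow by simply transferring Theorems~\ref{th:wc}, \ref{th:pic} and \ref{th:erg} to this instance. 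The only data I must track through the reduction are that the hypotheses of those theorems — namely Assumption~${\bf (A)}$ on $(\alpha,\sigma,\tau)$ and $\{\alpha_k\}$, and the lower bound $\lambda_k\geq\underline\lambda>0$ — are assumed verbatim here, while the additional upper bound $\lambda_k\le\sigma/L$ imposed in Step~3 is exactly what guarantees the HPE error criterion \eqref{eq:err.hpe} holds with the prescribed $\sigma$ (again via Proposition~\ref{pr:tsg.e.ihpe}); one only needs the compatibility $\underline\lambda\le\sigma/L$.

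Part~(a) is then immediate: the weak convergence of $\{z_k\}$ to a zero of $T=F+B$ is precisely the second conclusion of Theorem~\ref{th:wc}. For part~(b) I would invoke the pointwise rate of Theorem~\ref{th:pic}: for each $k$ it furnishes an index $i\in\{1,\dots,k\}$ with $v_i\in T^{\varepsilon_i}(\tilde z_i)$ and the bound \eqref{eq:th:pic02} on $\norm{v_i}$. The essential simplification here is that $\varepsilon_i=0$ for all $i$ by \eqref{eq:tsg101}; since $T=F+B$ is maximal monotone, Proposition~\ref{pr:teps}(d) gives $T^{0}=T$, so the inclusion sharpens to the \emph{exact} membership $v_i\in(F+B)(\tilde z_i)$ claimed in \eqref{eq:tsg100}, and the $\norm{v_i}$ estimate is copied directly from \eqref{eq:th:pic02}. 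No $\varepsilon_i$-estimate is needed because $\varepsilon_i\equiv 0$.

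Part~(c), under $\alpha_k\equiv\alpha$, follows the same route through Theorem~\ref{th:erg}, whose bounds \eqref{eq:seg07} and \eqref{eq:seg06} on $\norm{v_k^a}$ and $\varepsilon_k^a$ reproduce the displayed estimates once $d_0$ is read as the distance of $z_0$ to $(F+B)^{-1}(0)$. The one point that deserves care — and the only genuine subtlety in the whole argument — is the difference in the two inclusions: although each individual $\varepsilon_k$ vanishes, the ergodic tolerance $\varepsilon_k^a$ defined in \eqref{eq:d.eg} need not vanish, owing to the cross terms $\inner{\tilde z_j-\tilde z_k^a}{v_j-v_k^a}$, so the best one can assert for the averaged iterate is the \emph{enlarged} inclusion $v_k^a\in(F+B)^{\varepsilon_k^a}(\tilde z_k^a)$. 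This is exactly what the transportation formula (Theorem~\ref{th:tf}) delivers, and it is already packaged into conclusion \eqref{eq:seg08} of Theorem~\ref{th:erg}. Thus there is no hard computation to perform; the work lies entirely in invoking the reduction of Proposition~\ref{pr:tsg.e.ihpe} with $T=F+B$ and in stating the exact-versus-enlarged distinction between (b) and (c) correctly.
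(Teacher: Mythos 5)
Your proposal is correct and takes essentially the same route as the paper, whose entire proof is the one-line reduction via Proposition~\ref{pr:tsg.e.ihpe} followed by the invocation of Theorems~\ref{th:wc}, \ref{th:pic} and \ref{th:erg}. The extra points you spell out --- that $\varepsilon_i\equiv 0$ together with $T^{0}=T$ (Proposition~\ref{pr:teps}(d)) upgrades the pointwise inclusion in (b) to exact membership in $(F+B)(\tilde z_i)$, while the ergodic tolerance $\varepsilon_k^a$ in (c) need not vanish and so only the enlarged inclusion survives --- are exactly what the paper leaves implicit in its appeal to those theorems.
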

\begin{proof}
 The proof follows directly from Proposition \ref{pr:tsg.e.ihpe} and Theorems \ref{th:wc}, \ref{th:pic}
and \ref{th:erg}.
\end{proof}

\mgap
\mgap

\noindent
 {\bf Remarks.}
\begin{itemize}
\item[(i)] Itens (b) and (c) ensure, respectively, global pointwise $\mathcal{O}(1/\sqrt{k})$ and ergodic $\mathcal{O}(1/k)$ convergence rates for
Algorithm \ref{inertial.tseng}. On the other hand, note that the inclusion in \eqref{eq:seg088} is potentially weaker than the corresponding
one in \eqref{eq:tsg100}.
\item[(ii)] If $\lambda_k\equiv \sigma/L$ in Step 3 of Algorithm \ref{inertial.tseng}, in which case $\underline{\lambda}=\sigma/L$, then
$d_0/\underline{\lambda}$ in \eqref{eq:tsg100} and \eqref{eq:seg088} can be replaced by
$d_0 L/\sigma$. In this case, Item (b) gives that for a given tolerance $\rho>0$,  Algorithm \ref{inertial.tseng} finds a pair $(z,v)$
such that (cf. \eqref{eq:appsol})
 \[
   v\in (F+B)(z),\quad \norm{v}\leq \rho
 \]
in at most 
\[
 \mathcal{O}\left(\left\lceil\dfrac{d_0^2 L^2}{\rho^2}\right\rceil\right)
\]
iterations, an analogous remark also holding for Item (c).
\end{itemize}

\subsection{On the inertial under-relaxed forward-backward method}
  \label{subsec:fb}
Similarly to Subsection \ref{subsec:tsg}, in this subsection, we consider the monotone inclusion problem
\eqref{eq:mipt} but now assume the following: (C2) and (C3) as in Subsection \ref{subsec:tsg} and instead of
(C1):
\begin{itemize}
 \item[(C1$'$)]  $F: \HH\to \HH$ is $(1/L)-$cocoercive, i.e., there exists $L>0$ such that
 \begin{align}
  \label{eq:f.coco}
  \inner{z-z'}{F(z)-F(z')}\geq \dfrac{1}{L}\norm{F(z)-F(z')}^2\qquad \forall z,z'\in \HH.
  \end{align}
 \end{itemize}
We observe that it follows from \eqref{eq:f.coco} that $F$ is, in particular, $L$--Lipschitz continuous.

\mgap
\mgap

\noindent
\fbox{
\addtolength{\linewidth}{-2\fboxsep}%
\addtolength{\linewidth}{-2\fboxrule}%
\begin{minipage}{\linewidth}
\begin{algorithm}
\label{inertial.fb}
{\bf Inertial under-relaxed forward-backward method for solving \bf{(\ref{eq:mipt})}}
\end{algorithm}
\begin{itemize}
\item[] {\bf Input:} $z_0=z_{-1}\in \HH$, $0\leq \alpha<1$, $0<\sigma<1$ and $0<\tau\leq 1$.
\item [{\bf 1:}] {\bf for} $k=1,2,\dots$,  {\bf do}
\item [{\bf 2:}] Choose $\alpha_{k-1}\in [0,\alpha]$ and define
  \begin{align*}
     w_{k-1}:=z_{k-1}+\alpha_{k-1}(z_{k-1}-z_{k-2}).
 \end{align*}
\item [{\bf 3:}] Choose $\lambda_k\in ]0,2\sigma^2/L]$ and compute
\begin{align*}
 &\tilde z_k=(\lambda_k B+I)^{-1}(w_{k-1}-\lambda_k F(w_{k-1})).
%
\end{align*}
\item[{\bf 4:}] Define
 \begin{align*}
   z_k:= (1-\tau)w_{k-1}+\tau \tilde z_k.
 \end{align*}
   \end{itemize}
\noindent
\end{minipage}
} 

\mgap
\mgap

\noindent
{\bf Remarks.}
\begin{itemize}
\item[(i)] If $\alpha=0$ and $\tau=1$, then it follows that Algorithm \ref{inertial.fb} reduces 
to the forward-backward \cite{lio.mer-spl.sjna79, pas-erg.jmaa79} method for solving \eqref{eq:mipt}.
\item[(ii)] Inertial versions of the forward-backward method were previously proposed and studied in \cite{mou.oli-con.jcam03}, \cite{lor.poc-ine.jmiv15} and \cite{att.cab-con.pre218}. Asymptotic convergence of the forward-backward method proposed in \cite{lor.poc-ine.jmiv15} was
proved in the latter reference, in particular, under the assumption: $0\leq \alpha_{k-1}\leq \alpha_k\leq \alpha<1$, for all $k\geq 1$, and
\[
  \alpha=\alpha(\gamma):=1+\dfrac{\sqrt{9-4\gamma-2\varepsilon\gamma}-3}{\gamma},
\]
for some $\varepsilon\in ]0,(9-4\gamma)/(2\gamma)[$, where $\gamma\in (0,2)$ and $\lambda_k\equiv \lambda:=\gamma/L$
($\gamma=2\sigma^2$ in the notation of the present paper). The apparent limitation of this approach is that
$\alpha\to 0$ if $\gamma\to 2$, i.e., the inertial effect degenerates for large values of the stepsize (see Fig. 1 in \cite{lor.poc-ine.jmiv15}).
 This contrasts to the approach proposed in this paper, where the under-relaxation parameter $\tau\in [0,1[$ is crucial to allowing $\alpha$ sufficiently close to 1, even for large stepsize values, i.e., when $\sigma\approx 1$ (see Assumption {\bf (A)} and part of the
discussion in the third remark following it).
\item[(iii)] Algorithm \ref{inertial.fb} is a special instance (with constant relaxation) of the RIFB algorithm in
\cite{att.cab-con.pre218}. We refer the reader to \cite{att.cab-con.pre218} (see, e.g., Theorems 3.8 and 3.15, and
 Remark 3.13) for a comprehensive discussion of the interplay and benefits of inertia and relaxation.
\end{itemize}

Next proposition shows that Algorithm \ref{inertial.fb} is also a special instance of Algorithm \ref{inertial.hpe} for solving
\eqref{eq:mipt}. Since the proof follows the same outline of \cite[Proposition 5.3]{sva-cla.jota14}, we omit it here too.

\begin{proposition}
 \label{pr:fb.e.hpe}
Let $\{\tilde z_k\}$, $\{z_k\}$, $\{w_k\}$ and $\{\lambda_k\}$ be generated by \emph{Algorithm \ref{inertial.fb}}, let
$T=F+B$ be as in \eqref{eq:mipt} and define, for all $k\geq 1$,
\begin{align}
  \label{pr:0606}
  \varepsilon_k:=\dfrac{\norm{\tilde z_k-w_{k-1}}^2}{4L^{-1}}\;\;\mbox{and}\;\; v_k:=\dfrac{w_{k-1}-\tilde z_k}{\lambda_k}.
\end{align}
Then, the following hold for all $k\geq 1$:
 \begin{align}
   \begin{aligned}
   \label{eq:0607}
   & v_k\in (F^{\varepsilon_k}+B)(\tilde z_k)\subset T^{\varepsilon_k}(\tilde z_k),\\[1mm]
   & \lambda_k v_k+\tilde z_k-w_{k-1}=0,\quad
   2\lambda_k\varepsilon_k\leq \sigma^2\norm{\tilde z_k-w_{k-1}}^2,\quad z_k=w_{k-1}-\tau\lambda_k v_k.
 \end{aligned} 
\end{align}
As a consequence of \eqref{eq:0607} and \emph{Algorithm \ref{inertial.fb}}'s definition, it follows that 
\emph{Algorithm \ref{inertial.fb}} is a special instance of \emph{Algorithm \ref{inertial.hpe}} for solving
\eqref{eq:mipt}.
\end{proposition}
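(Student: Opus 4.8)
The plan is to verify the four relations collected in \eqref{eq:0607} one at a time and then read off that they coincide with the defining relations of Algorithm \ref{inertial.hpe}. Two of them are purely algebraic. From the definition $v_k:=(w_{k-1}-\tilde z_k)/\lambda_k$ in \eqref{pr:0606} one has $\lambda_k v_k=w_{k-1}-\tilde z_k$, whence $\lambda_k v_k+\tilde z_k-w_{k-1}=0$; substituting the same identity into the update $z_k=(1-\tau)w_{k-1}+\tau\tilde z_k$ of Step 4 gives $z_k=w_{k-1}-\tau(w_{k-1}-\tilde z_k)=w_{k-1}-\tau\lambda_k v_k$. For the error estimate, note that with $\varepsilon_k=(L/4)\norm{\tilde z_k-w_{k-1}}^2$ one has $2\lambda_k\varepsilon_k=(\lambda_k L/2)\norm{\tilde z_k-w_{k-1}}^2$, so the stepsize restriction $\lambda_k\le 2\sigma^2/L$ from Step 3 yields exactly $2\lambda_k\varepsilon_k\le\sigma^2\norm{\tilde z_k-w_{k-1}}^2$.

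The substantive step is the inclusion $v_k\in(F^{\varepsilon_k}+B)(\tilde z_k)$. Unfolding the resolvent $\tilde z_k=(\lambda_k B+I)^{-1}(w_{k-1}-\lambda_k F(w_{k-1}))$ through the characterization ``$z=(T+I)^{-1}x$ iff $x-z\in T(z)$'', I would obtain $w_{k-1}-\lambda_k F(w_{k-1})-\tilde z_k\in\lambda_k B(\tilde z_k)$, which after dividing by $\lambda_k$ reads $v_k-F(w_{k-1})\in B(\tilde z_k)$. It then suffices to show $F(w_{k-1})\in F^{\varepsilon_k}(\tilde z_k)$, since the decomposition $v_k=F(w_{k-1})+(v_k-F(w_{k-1}))$ would exhibit $v_k$ as a member of $F^{\varepsilon_k}(\tilde z_k)+B(\tilde z_k)$.

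To establish $F(w_{k-1})\in F^{\varepsilon_k}(\tilde z_k)$, I would test against an arbitrary $(z',F(z'))\in G(F)$ and estimate $\inner{\tilde z_k-z'}{F(w_{k-1})-F(z')}$. Inserting $w_{k-1}$ splits this into $\inner{\tilde z_k-w_{k-1}}{F(w_{k-1})-F(z')}+\inner{w_{k-1}-z'}{F(w_{k-1})-F(z')}$, and the cocoercivity assumption \eqref{eq:f.coco} bounds the second summand below by $L^{-1}\norm{F(w_{k-1})-F(z')}^2$. Writing $a:=\tilde z_k-w_{k-1}$ and $b:=F(w_{k-1})-F(z')$, the lower bound becomes $\inner{a}{b}+L^{-1}\norm{b}^2$, and completing the square as $L^{-1}\norm{b+(L/2)a}^2-(L/4)\norm{a}^2\ge-(L/4)\norm{a}^2=-\varepsilon_k$ closes the argument. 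This completing-the-square step, together with the recognition that the prescribed $\varepsilon_k=(L/4)\norm{\tilde z_k-w_{k-1}}^2$ is precisely the deficit produced by the square, is the crux of the whole proof; the rest is bookkeeping. The remaining inclusion $(F^{\varepsilon_k}+B)(\tilde z_k)\subset T^{\varepsilon_k}(\tilde z_k)$ follows from Proposition \ref{pr:teps}: since $B$ is maximal monotone, $B=B^0$ by part (d), so part (b) gives $F^{\varepsilon_k}(\tilde z_k)+B(\tilde z_k)=F^{\varepsilon_k}(\tilde z_k)+B^0(\tilde z_k)\subseteq(F+B)^{\varepsilon_k}(\tilde z_k)=T^{\varepsilon_k}(\tilde z_k)$.

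Finally, to conclude that Algorithm \ref{inertial.fb} is a special instance of Algorithm \ref{inertial.hpe}, I would match the verified relations against \eqref{eq:ext.hpe}--\eqref{eq:err.hpe2}: Step 2 of Algorithm \ref{inertial.fb} is literally \eqref{eq:ext.hpe}; the inclusion $v_k\in T^{\varepsilon_k}(\tilde z_k)$ together with $\lambda_k v_k+\tilde z_k-w_{k-1}=0$ and the error bound reproduce \eqref{eq:err.hpe} (its first left-hand summand vanishing); and $z_k=w_{k-1}-\tau\lambda_k v_k$ is \eqref{eq:err.hpe2}. Hence the triple $(\tilde z_k,v_k,\varepsilon_k)$ and $\lambda_k$ produced by Algorithm \ref{inertial.fb} satisfy Steps 3 and 4 of Algorithm \ref{inertial.hpe}, as required.
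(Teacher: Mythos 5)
Your proof is correct and follows exactly the outline the paper invokes by reference (the paper omits the proof, citing Svaiter's Proposition 5.3 in \emph{J. Optim. Theory Appl.} 162, 2014): unfold the resolvent to get $v_k-F(w_{k-1})\in B(\tilde z_k)$, show $F(w_{k-1})\in F^{\varepsilon_k}(\tilde z_k)$ via cocoercivity, and combine with the enlargement calculus of Proposition \ref{pr:teps}. Your completing-the-square argument is precisely the proof of the cocoercivity lemma (Svaiter's Lemma 2.2) that the authors rely on, so you have simply supplied the details the paper leaves implicit.
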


\mgap

We finish this section by presenting the convergence and iteration-complexity of Algorithm \ref{inertial.fb}, which are a direct 
consequence of Proposition \ref{pr:fb.e.hpe} and Theorems \ref{th:wc}, \ref{th:pic} and \ref{th:erg}.
We mention that analogous remarks to those made in the Remarks following Theorem \ref{th:tseng.main} also apply here.

\mgap

\begin{theorem}[convergence and iteration-complexity of Algorithm \ref{inertial.fb}]
\label{th:fb.main}
Under the \emph{Assumption ${\bf (A)}$} on $(\alpha,\sigma,\tau)\in [0,1[\times ]0,1[\times ]0,1]$ and $\{\alpha_k\}$,
let $\{z_k\}$, $\{\tilde z_k\}$ and $\{\lambda_k\}$  be generated
by \emph{Algorithm \ref{inertial.fb}}, let $\{v_k\}$
and $\{\varepsilon_k\}$  be as in \eqref{pr:0606} and let the ergodic sequences $\{\tilde z_k^a\}$, $\{v_k^a\}$ and $\{\varepsilon_k^a\}$
be as in \eqref{eq:d.eg}. Let also $\eta>0$ and $q(\cdot)$ be as in \eqref{eq:def.etak} and \eqref{eq:def.q}, respectively,
let $d_0$ denote the distance of $z_0$ to $(F+B)^{-1}(0)$  and assume
that $\lambda_k\geq \underline{\lambda}>0$ for all $k\geq 1$. Then, the following statements hold:
\begin{itemize}
 \item[\emph{(a)}] The sequence $\{z_k\}$ converges weakly to a solution of the monotone inclusion
problem \eqref{eq:mipt}.
 \item[\emph{(b)}] For all $k\geq 1$, there exists $i\in \{1,\dots, k\}$ such that
 \begin{align}
   \begin{aligned}
   \label{eq:tsg1007}
  & v_i\in (F^{\varepsilon_i}+B)(\tilde z_i),\\[3mm]
  & \norm{v_i}\leq \dfrac{d_0}{\underline{\lambda}\tau\,\sqrt{k}}
\sqrt{\eta^{-1}\left(1+\dfrac{2\alpha(1+\alpha)}{(1-\alpha)^2q(\alpha)}\right)},\\[3mm]
  & 
 \varepsilon_i\leq \dfrac{\sigma d_0^2}{2(1-\sigma^2)\underline{\lambda}\tau \,k}
\left(1+\dfrac{2\alpha(1+\alpha)}{(1-\alpha)^2q(\alpha)}\right).
 \end{aligned}
\end{align}
\item[\emph{(c)}] If, additionally, $\alpha_k\equiv \alpha$, then, for all $k\geq 1$,
    \begin{align}
  \begin{aligned}
 \label{eq:seg0887}
   & v_k^a\in (F+B)^{\varepsilon_k^a}(\tilde z_k^a),\\[2mm]
   &\norm{v_k^a}\leq \dfrac{2(1+\alpha)d_0}{\underline{\lambda}\tau\,k}
         \sqrt{1+\dfrac{2\alpha(1+\alpha)}{(1-\alpha)^2q(\alpha)}},
\\[2mm]
   &\varepsilon_k^a\leq \dfrac{2\sqrt{2}d_0^2}{\underline{\lambda}\tau\,k}
  \left(1+\dfrac{2\alpha(1+\alpha)}{(1-\alpha)^2q(\alpha)}\right)
  \left(1+\dfrac{\sigma}{\sqrt{(1-\sigma^2)\tau}}+\sqrt{4+\dfrac{(1-\tau)^2}{\eta\tau^2}}\right).
 \end{aligned}
 \end{align}
  \end{itemize}
\end{theorem}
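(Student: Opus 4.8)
The plan is to establish Theorem~\ref{th:fb.main} by recognizing Algorithm~\ref{inertial.fb} as a particular realization of the general inertial under-relaxed HPE scheme (Algorithm~\ref{inertial.hpe}) and then transferring the convergence and complexity results already proven for the latter. The crucial bridge is Proposition~\ref{pr:fb.e.hpe}: with $v_k$ and $\varepsilon_k$ chosen as in \eqref{pr:0606}, the relations collected in \eqref{eq:0607} -- namely $v_k\in(F^{\varepsilon_k}+B)(\tilde z_k)\subset T^{\varepsilon_k}(\tilde z_k)$, the identity $\lambda_k v_k+\tilde z_k-w_{k-1}=0$, the error bound $2\lambda_k\varepsilon_k\leq\sigma^2\norm{\tilde z_k-w_{k-1}}^2$, and $z_k=w_{k-1}-\tau\lambda_k v_k$ -- show that the sequences generated by Algorithm~\ref{inertial.fb} satisfy conditions \eqref{eq:ext.hpe}--\eqref{eq:err.hpe2}. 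Since $F$ is everywhere defined and Lipschitz continuous (hence continuous and maximal monotone) while $B$ is maximal monotone, the operator $T=F+B$ is maximal monotone with $T^{-1}(0)\neq\emptyset$ by (C1$'$)--(C3), so all hypotheses of Theorems~\ref{th:wc}, \ref{th:pic} and \ref{th:erg} hold under Assumption~{\bf (A)}.

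With this identification in place, part (a) is immediate: the second statement of Theorem~\ref{th:wc} gives weak convergence of $\{z_k\}$ to a solution of \eqref{eq:mipt} whenever $\lambda_k\geq\underline{\lambda}>0$. For part (b), Theorem~\ref{th:pic} produces, for each $k\geq1$, an index $i\in\{1,\dots,k\}$ together with the bounds \eqref{eq:th:pic02}--\eqref{eq:th:pic03} on $\norm{v_i}$ and $\varepsilon_i$, which are exactly the two estimates displayed in \eqref{eq:tsg1007}. The one point requiring care is the inclusion: the generic HPE machinery of Theorem~\ref{th:pic} yields only $v_i\in T^{\varepsilon_i}(\tilde z_i)$, whereas \eqref{eq:tsg1007} asserts the sharper $v_i\in(F^{\varepsilon_i}+B)(\tilde z_i)$. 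This refinement is not a consequence of the abstract analysis but comes directly from the cocoercivity-based construction of $(v_k,\varepsilon_k)$, i.e.\ from the first inclusion in \eqref{eq:0607}.

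For part (c), under the extra assumption $\alpha_k\equiv\alpha$, I would invoke Theorem~\ref{th:erg} directly: the ergodic inclusion $v_k^a\in(F+B)^{\varepsilon_k^a}(\tilde z_k^a)=T^{\varepsilon_k^a}(\tilde z_k^a)$ is \eqref{eq:seg08}, while the bounds \eqref{eq:seg07} and \eqref{eq:seg06} on $\norm{v_k^a}$ and $\varepsilon_k^a$ coincide with the two estimates in \eqref{eq:seg0887}. Here no strengthening of the inclusion is claimed, so the transfer from the general theorem is purely mechanical.

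I do not anticipate any genuine obstacle, since the substantive work has been front-loaded into Proposition~\ref{pr:fb.e.hpe} and the three master theorems. The only subtlety worth flagging is the discrepancy between the pointwise inclusion $v_i\in(F^{\varepsilon_i}+B)(\tilde z_i)$ of \eqref{eq:tsg1007} and the weaker $v_i\in T^{\varepsilon_i}(\tilde z_i)$ furnished by Theorem~\ref{th:pic}; keeping straight which inclusion is supplied by the HPE framework and which by the explicit definition \eqref{pr:0606} is the single place where attention is needed.
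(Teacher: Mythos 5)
Your proposal is correct and follows essentially the same route as the paper, which likewise obtains Theorem \ref{th:fb.main} as a direct consequence of Proposition \ref{pr:fb.e.hpe} together with Theorems \ref{th:wc}, \ref{th:pic} and \ref{th:erg}. Your observation that the sharper pointwise inclusion $v_i\in(F^{\varepsilon_i}+B)(\tilde z_i)$ in \eqref{eq:tsg1007} is supplied by \eqref{eq:0607} rather than by the abstract HPE analysis is exactly the right reading of a point the paper leaves implicit.
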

%

\section{Concluding remarks}
 \label{sec:cr}

In this paper, we proposed and studied the asymptotic convergence and iteration-complexity of an inertial under-relaxed HPE-type method.
As applications, we proposed and/or studied inertial (under-relaxed) versions of the Tseng's modified forward-backward and forward-backward methods for solving structured monotone inclusion problems with either  Lipschitz continuous or cocoercive operators. 
All the proposed and/or studied algorithms, namely Algorithms \ref{inertial.hpe}, \ref{inertial.ppm},  \ref{inertial.tseng} and
 \ref{inertial.fb} potentially benefit from a specific policy for choosing the upper bound on the sequence of extrapolation parameters, in which case (under) relaxation plays a central role (see Assumption ${\bf (A)}$ and Theorems \ref{th:pic}, \ref{th:erg},  \ref{th:tseng.main} and \ref{th:fb.main});  see also the recent work \cite{att.cab-con.pre218} of Attouch and Cabot.
We also emphasize that, up to the authors knowledge, this is the first time in the literature that nonasymptotic global convergence rates (iteration-complexity) are provided for inertial HPE-type methods, in particular for the proposed inertial Tseng's modified forward-backward method. 

\appendix
\section{Auxiliary results}

\comment{
\begin{lemma}\emph{(\cite[Lemma 2.2]{sva-cla.jota14})}
 \label{lm:coco}
 Let $F:\HH\to \HH$ be $\gamma$--cocoercive, for some $\gamma>0$, and let $w,\widetilde z\in \HH$. Then,
 \[
   F(w)\in F^{\varepsilon}(\widetilde z)\quad\mbox{where}\quad \varepsilon:=\dfrac{\norm{\widetilde z-w}^2}{4\gamma}.
 \]
\end{lemma}
}

Next lemma was proved in \cite[Lemma 2.1]{pre-print-benar}. Here, we present a short and direct proof for
the convenience of the reader.
\begin{lemma}[Svaiter]
 \label{pr:ben}
 Let $\tilde z, v, w\in \HH$ and $\lambda>0$, $\varepsilon\geq 0$ and $\sigma\in [0,1[$ be such that
%
\begin{align}
  \label{eq:sva.01}
 v\in T^{\varepsilon}(\tilde z),\quad \norm{\lambda v+\tilde z-w}^2+
2\lambda\varepsilon\leq \sigma^2\norm{\tilde z-w}^2.
\end{align}
Let $\tau\in [0,1]$ and define $z_+:=w-\tau\lambda v$.
Then, the following hold:
\begin{itemize}
 \item[\emph{(a)}] For any $z\in \HH$,
\begin{align*}
 \norm{w-z}^2-\norm{z_+-z}^2\geq (1-\sigma)^2\tau \norm{\tilde z-w}^2+
2\tau\lambda\left(\varepsilon+\inner{\tilde z-z}{v}\right)+\tau(1-\tau)\norm{\lambda v}^2.
\end{align*}
 \item[\emph{(b)}] For any $z^*\in T^{-1}(0)$,
 \begin{align*}
 \norm{w-z^*}^2-\norm{z_+-z^*}^2\geq (1-\sigma^2)\tau\norm{\tilde z-w}^2+\tau(1-\tau)\norm{\lambda v}^2.
\end{align*}
\end{itemize}
\end{lemma}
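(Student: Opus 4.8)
The plan is to prove both inequalities by directly expanding the squared norms around the update $z_+=w-\tau\lambda v$ and then inserting the relative-error bound \eqref{eq:sva.01}; the argument is computational, requiring only careful tracking of coefficients. First I would record the elementary identity obtained from $z_+-z=(w-z)-\tau\lambda v$: for any $z\in\HH$,
\[
  \norm{w-z}^2-\norm{z_+-z}^2=2\tau\lambda\inner{w-z}{v}-\tau^2\norm{\lambda v}^2 .
\]
To expose the inner product $\inner{\tilde z-z}{v}$ that appears on the right-hand side of (a), I would split $\inner{w-z}{v}=\inner{w-\tilde z}{v}+\inner{\tilde z-z}{v}$.

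The central step is to turn the error condition into a lower bound on the residual cross term $\inner{w-\tilde z}{v}$. Expanding $\norm{\lambda v+\tilde z-w}^2=\norm{\lambda v}^2+2\lambda\inner{v}{\tilde z-w}+\norm{\tilde z-w}^2$ and substituting into \eqref{eq:sva.01}, then rearranging, gives
\[
  2\lambda\inner{w-\tilde z}{v}\geq (1-\sigma^2)\norm{\tilde z-w}^2+\norm{\lambda v}^2+2\lambda\varepsilon .
\]
This single inequality produces both quadratic contributions $(1-\sigma^2)\norm{\tilde z-w}^2$ and $\norm{\lambda v}^2$ at once, and everything that follows is just the reassembly of these pieces.

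For part (a) I would multiply the last display by $\tau$ and substitute it into the split identity: the $+\tau\norm{\lambda v}^2$ coming from the error bound combines with the $-\tau^2\norm{\lambda v}^2$ from the expansion into $\tau(1-\tau)\norm{\lambda v}^2$, while $2\tau\lambda\varepsilon$ and $2\tau\lambda\inner{\tilde z-z}{v}$ regroup into $2\tau\lambda(\varepsilon+\inner{\tilde z-z}{v})$. Since $\sigma\in[0,1[$ yields $1-\sigma^2=(1-\sigma)(1+\sigma)\geq(1-\sigma)^2$, the coefficient may be weakened to $(1-\sigma)^2$ to match the stated form. For part (b) I would take $z=z^*\in T^{-1}(0)$, so that $(z^*,0)\in G(T)$; the defining inequality \eqref{eq:def.teps} of the $\varepsilon$-enlargement, applied with $v\in T^\varepsilon(\tilde z)$, gives $\inner{\tilde z-z^*}{v}\geq-\varepsilon$, hence $\varepsilon+\inner{\tilde z-z^*}{v}\geq 0$. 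Keeping the sharper constant $(1-\sigma^2)$ in the computation above and discarding this nonnegative term then delivers (b) exactly.

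The only point needing attention, rather than a genuine obstacle, is the coefficient bookkeeping: ensuring that the $\tau$ from scaling the error bound and the $\tau^2$ from the expansion merge precisely into $\tau(1-\tau)$, and recognizing that the apparent discrepancy between the $(1-\sigma)^2$ in (a) and the $(1-\sigma^2)$ in (b) is reconciled by the fact that the expansion naturally produces $(1-\sigma^2)$, with (a) merely stating a weaker but still valid bound. The endpoint cases $\tau\in\{0,1\}$ require no separate treatment, since all displays hold for every $\tau\in[0,1]$.
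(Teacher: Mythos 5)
Your proof is correct and follows essentially the same route as the paper's: both hinge on rearranging the error criterion \eqref{eq:sva.01} into a lower bound on the cross term $2\lambda\inner{w-\tilde z}{v}$ that simultaneously produces the $(1-\sigma^2)\norm{\tilde z-w}^2$ and $\norm{\lambda v}^2$ contributions, and both obtain (b) from the sharper $(1-\sigma^2)$ constant together with $\varepsilon+\inner{\tilde z-z^*}{v}\geq 0$ from the enlargement definition. The only cosmetic difference is that the paper organizes the relaxation by writing $z_+=(1-\tau)w+\tau(w-\lambda v)$ and invoking the identity \eqref{eq:str2}, whereas you expand $\norm{z_+-z}^2$ directly; the resulting algebra is identical.
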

\begin{proof}
(a)  Using the inequality in \eqref{eq:sva.01} and some algebraic manipulations we find, for any $z\in \HH$,
\begin{align}
  \nonumber
 \norm{w-z}^2-\norm{(w-\lambda v)-z}^2&=\norm{\tilde z-w}^2-\norm{\lambda v+\tilde z-w}^2
+2\lambda \inner{\tilde z-z}{v}\\
  \label{eq:sva.03}
  &\geq (1-\sigma^2)\norm{\tilde z-w}^2+2\lambda\left(\varepsilon+\inner{\tilde z-z}{v}\right).
\end{align}
The fact that $z_+=(1-\tau)w+\tau(w-\lambda v)$ and \eqref{eq:str2} yield
%
%
%
\begin{align*}
 \norm{z_+-z}^2&=(1-\tau)\norm{w-z}^2+\tau\norm{(w-\lambda v)-z}^2-\tau(1-\tau)\norm{\lambda v}^2\\
           &=\norm{w-z}^2-\tau\left(\norm{w-z}^2-\norm{(w-\lambda v)-z}^2\right)-\tau(1-\tau)\norm{\lambda v}^2.
\end{align*}
Multiplying \eqref{eq:sva.03} by $\tau\in [0,1]$ and using the latter identity we obtain the desired inequality in (a).

\mgap
\noindent
(b) This is a direct consequence of Item (a), \eqref{eq:def.teps}, the inclusion in \eqref{eq:sva.01} and the fact that
$0\in T(z^*)$.
\end{proof}

\begin{lemma}
 \label{lm:inverse}
 For any $\sigma\in [0,1[$, the inverse function of the scalar map
\begin{align*}
 A:=]0,1+\sigma]\ni t\mapsto \dfrac{4-2t}{4-t+\sqrt{16t-7t^2}}\in \left[\dfrac{2(1-\sigma)}{3-\sigma+\sqrt{9+2\sigma-7\sigma^2}},1\right[=:B
\end{align*}
is given by
\begin{align*}
B\ni \beta \mapsto \dfrac{2(\beta-1)^2}{2(\beta-1)^2+3\beta-1}\in A.
\end{align*}
\end{lemma}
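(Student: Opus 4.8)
The plan is to prove that the two maps are mutually inverse bijections by combining one explicit algebraic identity with an elementary continuity-plus-monotonicity argument. Write $f(t):=(4-2t)/\big(4-t+\sqrt{16t-7t^2}\big)$ for the first map and $g(\beta):=2(\beta-1)^2/\big(2(\beta-1)^2+3\beta-1\big)$ for the claimed inverse. First I would record well-definedness: on $A=\,]0,1+\sigma]$ the radicand factors as $16t-7t^2=t(16-7t)$, which is positive since $1+\sigma<2<16/7$, and the denominator $4-t+\sqrt{16t-7t^2}$ is strictly positive there (as $t<4$); hence $f$ is well-defined and continuous on $A$. Likewise $g$ is well-defined on all of $B$, because its denominator equals $P:=2\beta^2-\beta+1$, whose discriminant $1-8<0$ keeps it strictly positive.

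The heart of the argument is a single algebraic computation showing $g\circ f=\mathrm{id}_A$. Fix $t\in A$, set $\beta:=f(t)$ and $s:=\sqrt{16t-7t^2}\ge 0$. Clearing the denominator in $\beta=f(t)$ gives $\beta(4-t+s)=4-2t$, i.e.\ $\beta s=4(1-\beta)-t(2-\beta)$. Squaring (legitimate here, since the left side $\beta s$ is nonnegative) and substituting $s^2=16t-7t^2$, I would collect terms into a quadratic in $t$. Writing $P=2\beta^2-\beta+1=2(\beta-1)^2+3\beta-1$ and $Q=(\beta-1)^2$, and using the identity $6\beta^2-6\beta+4=2(P+Q)$, this quadratic simplifies (after dividing by $4$) to
\[
 P t^2-2(P+Q)\,t+4Q=(t-2)(Pt-2Q)=0 .
\]
Since every $t\in A$ satisfies $t\le 1+\sigma<2$, the root $t=2$ is excluded, leaving $t=2Q/P=g(\beta)$. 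Thus $g(f(t))=t$ for all $t\in A$; in particular $f$ is injective and $f(A)\subseteq B$.

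It remains to upgrade this left inverse to a genuine two-sided inverse. A direct substitution at the endpoints gives $\lim_{t\to 0^+}f(t)=4/4=1$ and, using $(1+\sigma)(9-7\sigma)=9+2\sigma-7\sigma^2$,
\[
 f(1+\sigma)=\dfrac{2(1-\sigma)}{3-\sigma+\sqrt{9+2\sigma-7\sigma^2}},
\]
which are precisely the excluded supremum $1$ and the attained infimum of $B$. Being continuous and injective on the interval $A$, the map $f$ is strictly monotone, and since $f(1+\sigma)<1$ it is strictly decreasing; the intermediate value theorem then shows $f$ maps $A$ bijectively onto $B$. Consequently each $\beta\in B$ equals $f(t)$ for a unique $t\in A$, and the identity of the previous paragraph gives $g(\beta)=g(f(t))=t=f^{-1}(\beta)$, so $g=f^{-1}$ on $B$ (which in particular forces $g(B)=A$, as claimed). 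I expect the main obstacle to be purely clerical: carrying out the squaring and collecting terms correctly so that the factorization $(t-2)(Pt-2Q)$ emerges transparently. Organizing the computation as $g\circ f$ rather than $f\circ g$ is deliberate, since starting from a genuine value $\beta=f(t)$ keeps the sign of $\beta s$ fixed and thereby avoids any spurious-root issues from the squaring step.
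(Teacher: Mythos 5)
Your proof is correct. Note that the paper states Lemma \ref{lm:inverse} in its appendix \emph{without any proof}, so there is no argument of the authors' to compare yours against; your write-up supplies the missing verification. Both essential steps check out: (i) the algebraic identity --- clearing the denominator in $\beta=f(t)$, isolating $\beta s$, squaring, and substituting $s^{2}=t(16-7t)$ does yield $Pt^{2}-2(P+Q)t+4Q=(t-2)(Pt-2Q)=0$ with $P=2\beta^{2}-\beta+1=2(\beta-1)^{2}+3\beta-1$ and $Q=(\beta-1)^{2}$, and the spurious root $t=2$ is excluded on $A=\,]0,1+\sigma]$ because $1+\sigma<2$; and (ii) the surjectivity step --- $f(1+\sigma)$ equals the left endpoint of $B$ via $(1+\sigma)(9-7\sigma)=9+2\sigma-7\sigma^{2}$, $f(t)\to 1$ as $t\to 0^{+}$, and continuity plus injectivity (inherited from the left inverse $g\circ f=\mathrm{id}_A$) forces strict monotonicity, so $f(A)=B$ by the intermediate value theorem and $g=f^{-1}$ on all of $B$. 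One cosmetic remark: the clause ``in particular $f$ is injective and $f(A)\subseteq B$'' placed immediately after the identity $g\circ f=\mathrm{id}_A$ is premature, since containment of the range in $B$ does not follow from the existence of a left inverse; this is harmless, however, because your endpoint/monotonicity argument later establishes the stronger fact $f(A)=B$ independently, and nothing in between relies on the premature claim.
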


\begin{lemma}[Opial]
 \label{lm:opial}
Let $\emptyset \neq \Omega\subset \HH$ and $\{z_k\}$ be a sequence in $\HH$ such that
$\lim_{k\to \infty}\,\norm{z_k-z^*}$ exist for every $z^*\in \Omega$. If every (sequential) weak cluster
point of $\{z_k\}$ belongs to $\Omega$, then $\{z_k\}$ converges weakly to a point in $\Omega$.
\end{lemma}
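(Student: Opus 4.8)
The plan is to prove Opial's lemma by the classical two-step route: first establish that $\{z_k\}$ is bounded and hence possesses at least one weak cluster point, and then show that it cannot have two distinct weak cluster points. Since $\HH$ is a Hilbert space, boundedness together with uniqueness of the weak cluster point will force weak convergence to the common value, which lies in $\Omega$ by hypothesis.

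First I would fix any $z^*\in \Omega$ (the set is nonempty). Since $\lim_{k\to\infty}\norm{z_k-z^*}$ exists, the sequence $\{\norm{z_k-z^*}\}$ is bounded, and therefore $\{z_k\}$ is bounded in $\HH$. Because bounded subsets of a Hilbert space are sequentially weakly compact, $\{z_k\}$ admits at least one weakly convergent subsequence, whose (weak) limit belongs to $\Omega$ by assumption; thus the set of weak cluster points of $\{z_k\}$ is a nonempty subset of $\Omega$.

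The heart of the argument is to show that this set is a singleton. I would argue by contradiction: suppose $a,b\in \Omega$ are two distinct weak cluster points, with $z_{k_j}\rightharpoonup a$ and $z_{k_l}\rightharpoonup b$. The key identity is
\[
\norm{z_k-a}^2-\norm{z_k-b}^2=\norm{a}^2-\norm{b}^2-2\inner{z_k}{a-b}\qquad \forall k\geq 1.
\]
Since both $a,b\in\Omega$, the two limits on the left-hand side exist by hypothesis, so $\lim_{k\to\infty}\inner{z_k}{a-b}$ exists; call it $\ell$. Passing to the limit along $\{z_{k_j}\}$ gives $\inner{a}{a-b}=\ell$, while passing along $\{z_{k_l}\}$ gives $\inner{b}{a-b}=\ell$. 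Subtracting yields $\inner{a-b}{a-b}=\norm{a-b}^2=0$, contradicting $a\neq b$. Hence all weak cluster points coincide; denote the common value by $z^\infty\in\Omega$.

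Finally, I would upgrade ``unique weak cluster point'' to genuine weak convergence. If $z_k\not\rightharpoonup z^\infty$, then there exist $y\in\HH$, $\epsilon>0$, and a subsequence with $\abs{\inner{z_{k_m}-z^\infty}{y}}\geq \epsilon$; this subsequence is bounded, hence has a further subsequence converging weakly to some $w\in\Omega$, which by uniqueness equals $z^\infty$, contradicting the lower bound $\epsilon$. Therefore $z_k\rightharpoonup z^\infty\in\Omega$, as claimed. The whole argument is standard; the only mildly delicate point is this last extraction step, while the central computation reduces to the inner-product identity above together with the fact that $\lim_{k\to\infty}\norm{z_k-z^*}$ exists for \emph{every} $z^*\in\Omega$.
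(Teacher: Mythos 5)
Your proof is correct and complete: boundedness of $\{z_k\}$ from the existence of $\lim_{k\to\infty}\norm{z_k-z^*}$ for a single $z^*\in\Omega$, the identity $\norm{z_k-a}^2-\norm{z_k-b}^2=\norm{a}^2-\norm{b}^2-2\inner{z_k}{a-b}$ forcing any two weak cluster points $a,b$ (which lie in $\Omega$ by hypothesis, so the relevant limits exist for them) to coincide, and the final subsequence-extraction step upgrading uniqueness of the weak cluster point to genuine weak convergence are all carried out properly. The paper states this lemma (the classical Opial lemma) in its appendix \emph{without proof}, so there is no in-paper argument to compare against; your write-up is exactly the standard argument the authors implicitly invoke.
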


The following lemma was essentially proved by Alvarez and Attouch in  \cite[Theorem 2.1]{alv.att-iner.svva01}.

\begin{lemma}
 \label{lm:alv.att}
Let the sequences $\{\varphi_k\}$, $\{s_k\}$, $\{\alpha_k\}$ and $\{\delta_k\}$ in $[0,+\infty[$
and $\alpha\in \R$ be such that
$\varphi_0=\varphi_{-1}$, $0\leq \alpha_{k-1}\leq \alpha<1$ and
\begin{align}
  \label{eq:alv.att02}
\varphi_{k}-\varphi_{k-1}+s_k\leq \alpha_{k-1}(\varphi_{k-1}-\varphi_{k-2})+\delta_k\qquad \forall k\geq 1.
\end{align}
The following hold:
\begin{enumerate}
  \item [\emph{(a)}] For all $k\geq 1$,
 \begin{align}
       \label{eq:alv.att01}
       \varphi_k+\sum_{j=1}^k\,s_j\leq
      \varphi_0+\dfrac{1}{1-\alpha} \sum_{j=1}^k\,\delta_j.
    \end{align}
  \item [\emph{(b)}] If $\sum^{\infty}_{k=1}\delta_k <+\infty$, then $\lim_{k\to \infty}\,\varphi_{k}$ exist, i.e., the sequence $\{\varphi_k\}$ converges to some element in $[0,\infty[$.
\end{enumerate}
\end{lemma}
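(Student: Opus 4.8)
The plan is to reduce the two-step recurrence to a one-step inequality for the increments. First I would set $\theta_k := \varphi_k - \varphi_{k-1}$ for $k \geq 0$; the hypothesis $\varphi_0 = \varphi_{-1}$ gives $\theta_0 = 0$, and \eqref{eq:alv.att02} rewrites as $\theta_k + s_k \leq \alpha_{k-1}\theta_{k-1} + \delta_k$ for all $k \geq 1$. Writing $[t]_+ := \max\{t,0\}$ and using $s_k \geq 0$, $\delta_k \geq 0$ together with $0 \leq \alpha_{k-1} \leq \alpha < 1$, I would deduce the scalar recursion $[\theta_k]_+ \leq \alpha[\theta_{k-1}]_+ + \delta_k$. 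Iterating this from $\theta_0 = 0$ yields the bound $[\theta_k]_+ \leq \sum_{j=1}^k \alpha^{k-j}\delta_j$, and then swapping the order of summation and summing the geometric factor gives $\sum_{k=1}^n [\theta_k]_+ \leq \frac{1}{1-\alpha}\sum_{k=1}^n \delta_k$. This single estimate is the engine for both parts.

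For part (a), I would telescope the rewritten recurrence over $k = 1, \dots, n$: the left-hand side collapses to $(\varphi_n - \varphi_0) + \sum_{k=1}^n s_k$, while on the right I bound $\sum_{k=1}^n \alpha_{k-1}\theta_{k-1} \leq \alpha \sum_{k=1}^{n-1}[\theta_k]_+ \leq \frac{\alpha}{1-\alpha}\sum_{k=1}^n \delta_k$, using the increment estimate and $\theta_0 = 0$ to kill the boundary term. Adding the remaining $\sum_{k=1}^n\delta_k$ and combining $\frac{\alpha}{1-\alpha} + 1 = \frac{1}{1-\alpha}$ produces exactly \eqref{eq:alv.att01}.

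For part (b), under $\sum_k \delta_k < \infty$ the increment estimate gives $\sum_k [\theta_k]_+ < \infty$. I would then introduce the auxiliary sequence $t_k := \varphi_k - \sum_{j=1}^k [\theta_j]_+$. Since $t_k - t_{k-1} = \theta_k - [\theta_k]_+ = \min\{\theta_k, 0\} \leq 0$, the sequence $\{t_k\}$ is nonincreasing; and because $\varphi_k \geq 0$ while the subtracted partial sums are bounded by the finite total $\sum_{j=1}^\infty [\theta_j]_+$, it is bounded below. Hence $\{t_k\}$ converges, and adding back the convergent series $\sum_j [\theta_j]_+$ shows $\varphi_k$ converges to a finite nonnegative limit.

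The only genuinely clever step is the passage to the positive-part increments $[\theta_k]_+$: it is what decouples the coupled two-step inequality into a contractive scalar recursion whose solution is summable. Once that estimate is in hand, part (a) is pure telescoping and part (b) is the standard ``monotone plus summable'' convergence argument, so I expect no serious obstacle beyond getting that first bound right --- in particular, correctly using $\theta_0 = 0$ so that no boundary term survives in the telescoped sum.
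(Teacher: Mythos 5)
Your proposal is correct, and it follows essentially the same route as the paper: the central estimate $\sum_{j=1}^k[\varphi_j-\varphi_{j-1}]_+\leq (1-\alpha)^{-1}\sum_{j=1}^k\delta_j$, followed by telescoping the recurrence for part (a) and the standard ``monotone plus summable'' argument for part (b). The only difference is one of self-containment: the paper imports both the positive-part estimate and all of part (b) by citation to Alvarez--Attouch (Theorem 2.1 of their paper), and only writes out the short telescoping computation for (a), whereas you re-derive the imported pieces from scratch --- the contraction recursion $[\theta_k]_+\leq\alpha[\theta_{k-1}]_+ + \delta_k$ with geometric summation, and the auxiliary nonincreasing sequence $t_k=\varphi_k-\sum_{j=1}^k[\theta_j]_+$ for (b). These are in fact the same arguments used in the cited reference, so your proof is a faithful, fully self-contained version of what the paper does; all steps check out, including the careful use of $\varphi_0=\varphi_{-1}$ to make the boundary term vanish.
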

\begin{proof}
 It was proved in \cite[Theorem 2.1]{alv.att-iner.svva01} that
 $\mathcal{M}:=(1-\alpha)^{-1}\sum_{j=1}^k\delta_j \geq
\sum_{j=1}^k\,[\varphi_j-\varphi_{j-1}]_+$, where $[\cdot]_{+}=\max \{\cdot,0\}.$
Using this, the  assumptions $\varphi_0=\varphi_{-1}$, $0\leq \alpha_{k-1}\leq \alpha<1$ and \eqref{eq:alv.att02}, and some algebraic manipulations we find
\begin{align*}
 \varphi_k+\sum_{j=1}^k\,s_j &\leq \varphi_0+ \alpha\sum_{j=1}^{k-1}[\varphi_j-\varphi_{j-1}]_{+}+\sum_{j=1}^k\,\delta_j\\
      &\leq \varphi_0+ \alpha \mathcal{M}+(1-\alpha)\mathcal{M}=\varphi_0+\mathcal{M},
    \end{align*}
which proves (a). To finish the proof of the lemma, note that (b) was proved
inside the proof of \cite[Theorem 2.1]{alv.att-iner.svva01}.
\end{proof}


%
\def\cprime{$'$}

\end{document}